\newtheorem{thm}{Theorem}[section]
\newtheorem{lemma}[thm]{Lemma}
\newtheorem{prop}[thm]{Proposition}
\newtheorem{cor}[thm]{Corollary}
{\theoremstyle{definition}
}
\theoremstyle{remark}
\newtheorem{rem}[thm]{Remark}}
\theoremstyle{remark}
\newenvironment{proofof}[2]{\begin{proof}[Proof of #1 \ref{#2}.]}{\end{proof}}
\newcommand{\be}{\begin{equation}}
\newcommand{\ee}{\end{equation}}
\newcommand{\bes}{\begin{equation*}}
\newcommand{\ees}{\end{equation*}}
\newcommand{\ud}{\mathrm{d}}
\newcommand{\trajcdot}{}
\newcommand{\octcdot}{}
\newcommand{\Die}{D}
\newcommand{\st}{\, : \, }
\newcommand{\DD}{ {T_1 \mathbb{D}}}
\newcommand{\R}{\mathbb{R}}
\newcommand{\CC}{\mathbb{C}}
\newcommand{\Aff}{Af\!f}
\newcommand{\RP}{\mathbb{R}\mathbb{P}^1}
\newcommand{\CP}{\mathbb{C}\mathbb{P}^1}
\newcommand{\Disk}{\mathbb{D}}
\title{Geodesic flow on the Teichm\"uller disk of the regular octagon, cutting sequences and octagon continued fractions maps.}
\author{John Smillie\footnote{Partially supported by NSF Grant DMS-0901521.}
  \and  Corinna
Ulcigrai\footnote{Partially supported by an RCUK Academic Fellowship.} }
\date{}
\begin{document}

\maketitle

\begin{abstract}
In this paper we give a geometric interpretation of the renormalization algorithm and of the continued fraction map that we introduced in \cite{SU:sym} to give a characterization of symbolic sequences for linear flows in the regular octagon. We interpret this algorithm as renormalization on  the Teichm\"uller disk of the octagon and explain the relation with Teichm\"uller geodesic flow. This connection is analogous to the classical relation between Sturmian sequences, continued fractions and geodesic flow on the modular surface. We use this connection to construct the natural extension and the invariant measure for the continued fraction map. We also define an acceleration of the continued fraction map which has a finite invariant measure.
\end{abstract}

\section{Introduction}\label{introsec}
It is a classical fact that \emph{continued fractions} are related to the \emph{geodesic flow} on the modular surface (see for example \cite{Se:geo, Se:mod}). 
Moreover, there is a deep connection between continued fractions and \emph{Sturmian sequences}, i.e.~sequences which code bi-infinite linear trajectories in a square (we refer the reader to \cite{Ar:stur} for survey on Sturmian sequences). The connection between the direction of a trajectory on the square and the symbol sequence is made by means of continued fractions.  

In \cite{SU:sym} we considered the analogous problem  of characterizing symbolic sequences which arise in coding bi-infinite linear trajectories on the regular octagon and more generally on any regular $2n$-gon. We developed an appropriate version of the continued fraction algorithm, similar in spirit but different in detail from that introduced by Arnoux and Hubert (\cite{AH:fra}).
We used this continued fraction algorithm to relate symbol sequences and directions of trajectories.  Some of the main results from \cite{SU:sym} are summarized in the following subsections. 

In this paper, we make a connection between \emph{this continued fraction algorithm}, our analysis of \emph{symbolic sequences for the octagon} and the \emph{Teichm\"uller flow} on an appropriate Teichm\"uller curve (which will actually be an orbifold in our case) that plays the role of the modular surface. All three can be interpreted in the framework of renormalization   (the idea of renormalization is discussed in the  section \S\ref{renormschemessec}). This completes the parallel between Sturmian sequences, continued fractions and geodesic flow on the one hand and the corresponding objects for the regular octagon.

In the classical case there are two related maps that lead to continued fraction expansions, the Farey map and the Gauss map.
In  \cite{SU:sym} we construct the analogue of the Farey map, that we call \emph{octagon Farey map}. 
We use the connection with the geodesic flow to construct the natural extension and the invariant measure for this map. 
 The invariant measure for the octagon Farey map is an infinite measure. We also define an acceleration of the  octagon Farey map which we call the \emph{octagon Gauss map} and which, like the classical Gauss map, has a finite invariant measure.
The dynamics of our continued fraction algorithm is closely connected to the coding of  geodesic flows introduced by Caroline Series (see  \cite{Se:geoMc, Se:sym}) and we explain similarities and differences.

\paragraph{Outline.}
In the remainder of this section  we give some basic definitions and 
 we give a brief exposition of the renormalization schemes  and of the continued fraction algorithm introduced in \cite{SU:sym}. In \S\ref{discsec} we define the Teichm\"uller disk of a translation surface as a space of affine deformations. Our approach differs from the standard one since we consider also orientation reversing affine deformations. In \S\ref{TeichdiskrenormCF},  we first describe  the Veech group and an associated tessellation of the Teichm\"uller disk  of  the octagon (\S\ref{octagondisk}). We then give an interpretation of the renormalization schemes and of the continued fraction algorithm in  \S\ref{renormcutseq} in terms of a renormalization on the Teichm\"uller disk and explain the connection with the  Teichm\"uller geodesic flow. 
In the section \S\ref{natextsec}, we use this interpretation to find a natural extension and the absolutely continuous  invariant measure for our continued fraction map. We also explain the connection between the natural extension and a certain cross section of the  Teichm\"uller geodesic flow on the Teichm\"uller orbifold, in section \S\ref{crosssec}. Finally, in the section \S\ref{gausssec}, we define an acceleration of the continued fraction map which has a finite invariant measure.

\subsection{Basic definitions} \label{basicsec}
\subsubsection{Translation surfaces and linear trajectories}\label{transsurfsec}
A translation surface is a collection of polygons $P_j\subset\R^2$ with identifications of pairs of parallel sides so that (1) sides are identified by maps which are restrictions of translations, (2) every side is identified to some other side and (3) when two sides are identified the outward pointing normals point in opposite directions. If $\sim$ denotes the equivalence relation coming from identification of sides then we define surface $S=\bigcup P_j/\sim$.   
If $p$ is a point corresponding to vertexes of polygons,  the cone angle at $p$ is the sum of the angles at the corresponding points in the polygons $P_j$. We say that $p$ is \emph{singular} if the cone angle is greater than $2\pi$ and we denote by $\Sigma \subset S$ the set of singular points. 
For an alternative approach to translation surfaces see \cite{Ma:erg}. 

Our prime example of a translation surface is the following. Let ${O}$ be a regular octagon. The boundary of $O$ consists of four pairs of parallel sides. Let $S_O$ be the surface  obtained by identifying points of opposite parallel sides of the octagon $O$  by using the   isometry between them which is the restriction of a translation. The surface $S_O$   is an example of a translation surface which has genus 2 and a single singular point with a cone angle of $6\pi$. 

\begin{figure}
\centering
{
\includegraphics[width=0.3\textwidth]{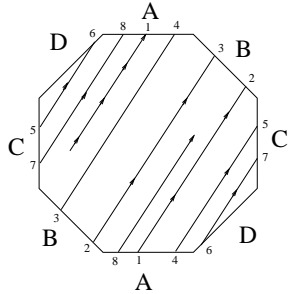}}
\caption{A linear trajectory in the octagon.\label{octagonflow}}
\end{figure}

If $S$ is a translation surface and $p\in S$ then the tangent space $T_p(S)$ has a natural identification with $\R^2$.  By using this identification any translation invariant geometric structure on $\R^2$ can be transported to all of $S\backslash \Sigma$. For example a vector $v\in\R^2$ gives a \emph{parallel vector field} on $S$,  a linear functional on $\R^2$ gives a \emph{parallel one form}. The metric $ds^2=dx^2+dy^2$ on $\R^2$ gives a \emph{flat metric} on $S$. 

Due to the presence of singular points parallel vector fields do not define flows. Nevertheless we can speak of trajectories for these vector field which we call \emph{linear trajectories}.  We call a  trajectory which does not hit $\Sigma$ \emph{bi-infinite}.  In this paper we consider only \emph{bi-infinite} trajectories, i.e.~trajectories which do not hit vertexes of the octagon.
A segment of a \emph{linear trajectory} in $S_O$ traveling in direction $\theta$ is shown in Figure \ref{octagonflow}: a point on the trajectory moves with  constant velocity vector making an angle $\theta$ with the horizontal and when it hits the boundary it re-enters the octagon at the corresponding point on the opposite side and continues traveling with the same velocity.

\subsubsection{Cutting sequences, admissibility and derivation}\label{cuttseqsec}

Given a linear trajectory in $S_O$ we would like to understand the sequence of sides of the octagon that it hits.
To do this let us label  each pair of opposite  sides of the octagon $O$ with a letter of an alphabet $\mathscr{A}=\{A,B,C,D\}$ as in Figure \ref{octagonflow}.
The \emph{cutting sequence} $c(\tau)$ associated to the linear trajectory $\tau$ is the bi-infinite word  in the letters of the alphabet $\mathscr{A}$, which is obtained by reading off the labels of the pairs of identified sides crossed by the trajectory $\tau$ as $t$ increases. 
 In \cite{SU:sym} we gave a characterization of bi-infinite words in the alphabet $\mathscr{A}$ which arise as cutting sequences of bi-infinite trajectories. Our characterization is based on the notion of admissible sequence and of derived sequence that we now recall.

 Let $\mathscr{A}^{\mathbb{Z}}$ be the space of bi-infinite words $w$ in the letters of $\mathscr{A}=\{A,B,C,D\}$. 
We call  \emph{transitions} the ordered pairs of letters which can occur in cutting sequences for trajectories with directions in a specified sector. Consider the diagrams $\mathscr{D}_0, \dots, \mathscr{D}_7$ in Figure \ref{diagrams}.
\begin{figure}
\centering
\subfigure[$\mathscr{D}_0$\label{D0}]{\includegraphics[width=0.2\textwidth]{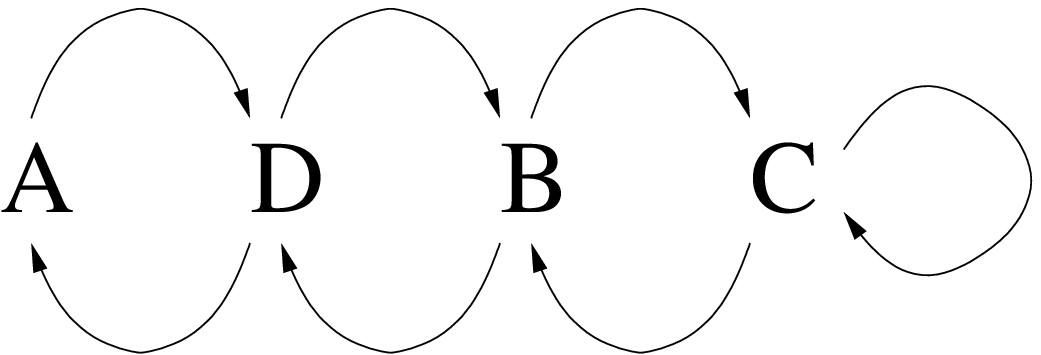}}
\hspace{3mm}
\subfigure[$\mathscr{D}_1$]{\includegraphics[width=0.2\textwidth]{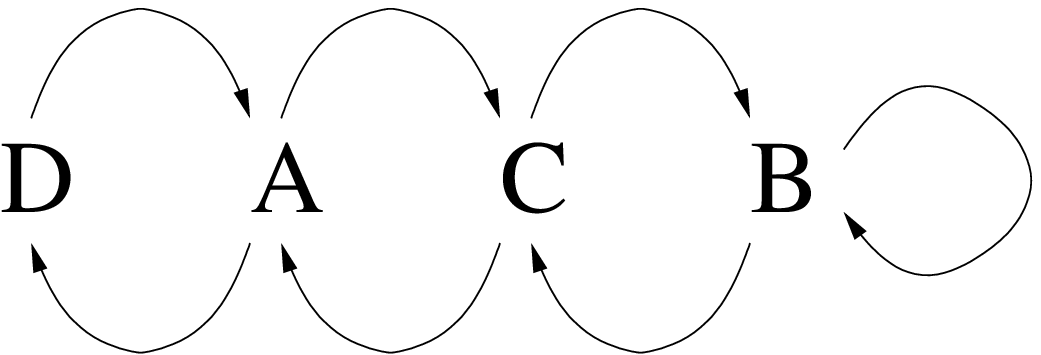}}
\hspace{3mm}
\subfigure[$\mathscr{D}_2$]{\includegraphics[width=0.2\textwidth]{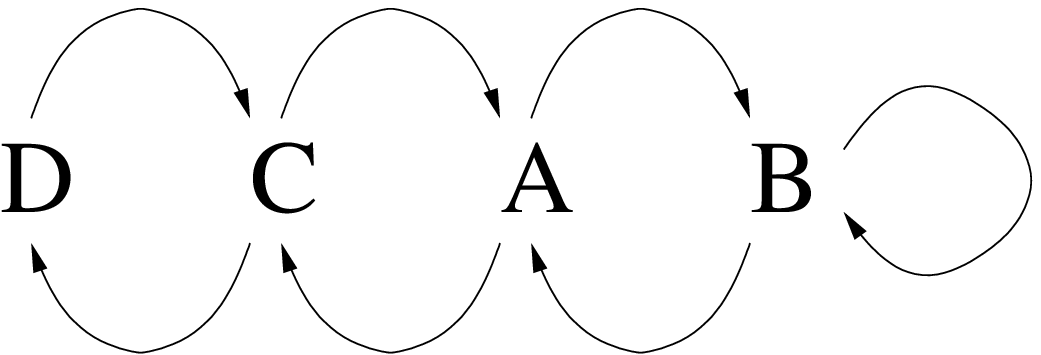}}
\hspace{3mm}
\subfigure[$\mathscr{D}_3$]{\includegraphics[width=0.2\textwidth]{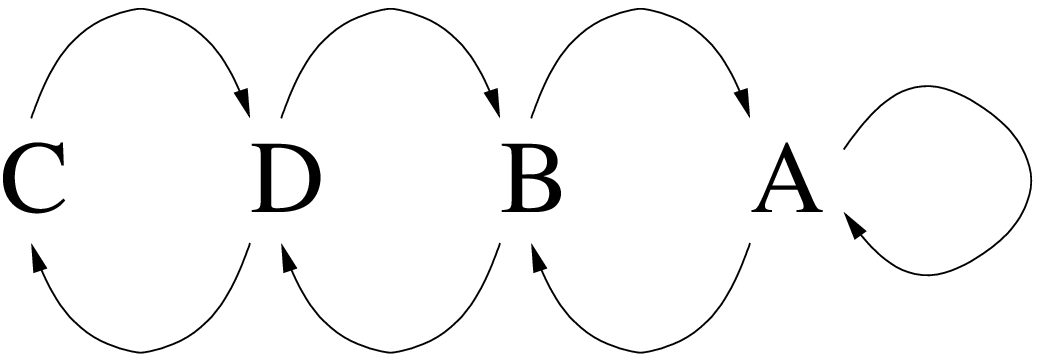}}
\hspace{3mm}
\subfigure[$\mathscr{D}_4$]{\includegraphics[width=0.2\textwidth]{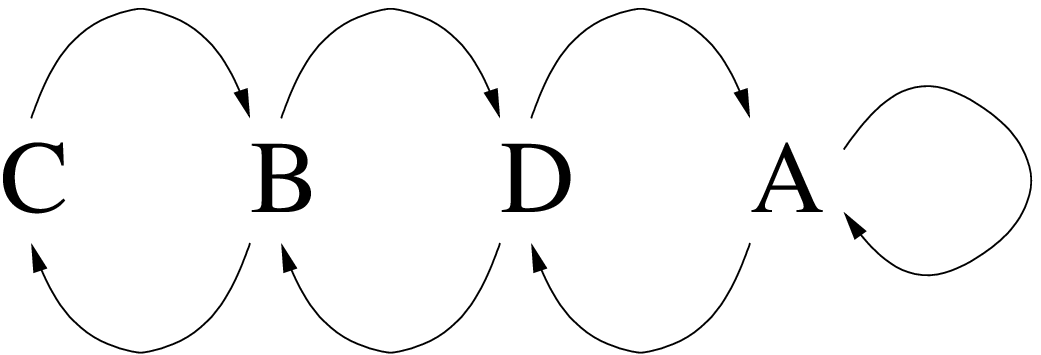}}
\hspace{3mm}
\subfigure[$\mathscr{D}_5$]{\includegraphics[width=0.2\textwidth]{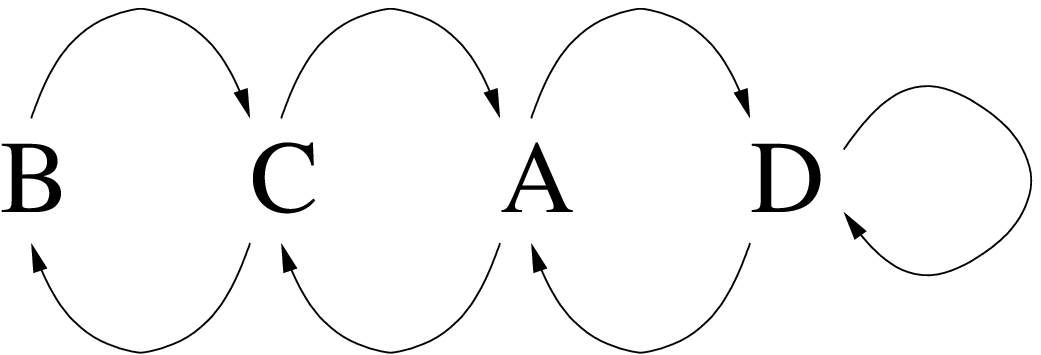}}
\hspace{3mm}
\subfigure[$\mathscr{D}_6$]{\includegraphics[width=0.2\textwidth]{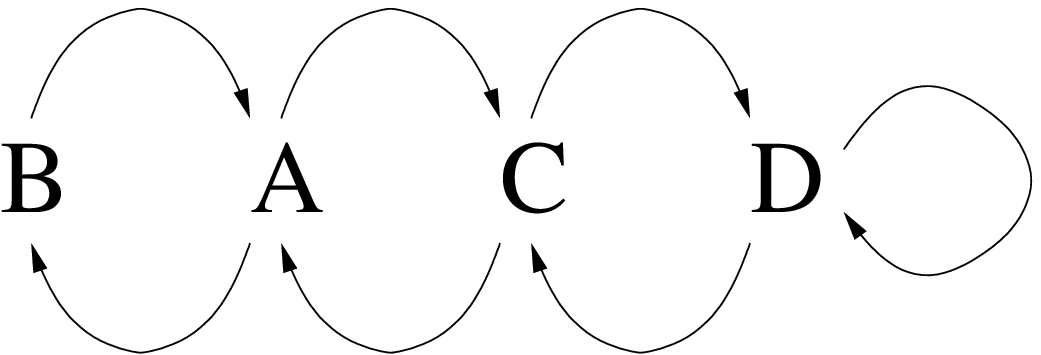}}
\hspace{3mm}
\subfigure[$\mathscr{D}_7$]{\includegraphics[width=0.2\textwidth]{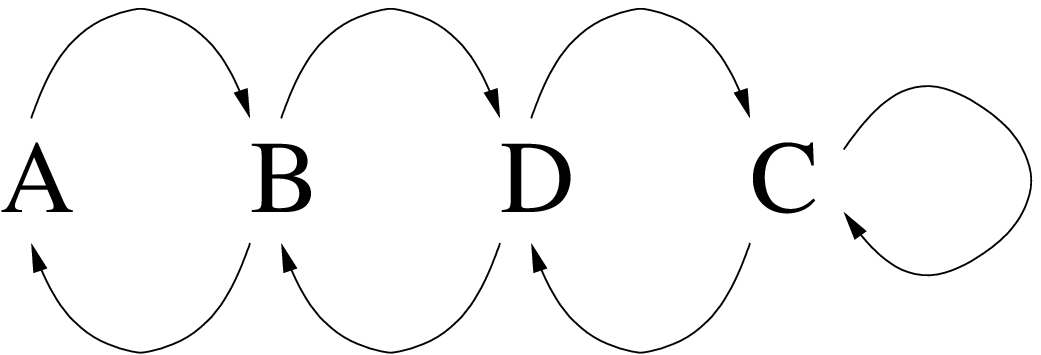}}
\caption{The transition diagrams $\mathscr{D}_0, \dots,\mathscr{D}_7$ corresponding to sectors  $\overline{\Sigma}_0,\dots, \overline{\Sigma}_7$. \label{diagrams}}
\end{figure}
 Let us say that the word  $w$ is \emph{admissible} if there exists a diagram $\mathscr{D}_i$ for $i\in \{0, \dots, 7\}$ such that all transitions in $w$ correspond to edges of $\mathscr{D}_i$. In this case, we will say that it is \emph{admissible in diagram $i$}. Equivalently, the sequence is admissible in diagram $i$ if it describes an infinite path on $\mathscr{D}_i$. We remark that some words are admissible in more than one diagram. 
It is not hard to check that  cutting sequences are admissible (see Lemma 2.3 in \cite{SU:sym}).   

We call \emph{derivation} the following combinatorial operation on admissible sequences. 
We say that a letter 
 in a  sequence  $w\in \mathscr{A}^{\mathbb{Z}}$ is \emph{sandwiched} if it is preceded and followed by the same letter. 
Given an admissible bi-infinite sequence $w \in \mathscr{A}^{\mathbb{Z}}$ the \emph{derived sequence}, which we denote by  $w'$, is the sequence obtained by keeping only the letters of $w$ which are sandwiched. 
For example, if $w$ contains the finite word $CACCCDBDCDC$ the derived sequence $w'$ contains the word $ACBCD$, since these are sandwiched letters in the string. Using the assumption that $w$ is admissible, one can show that $w'$ is again a bi-infinite sequence. 
A word $w \in \mathscr{A}^{\mathbb{Z}}$ is \emph{derivable} if it is admissible 
and its derived sequence $w'$ is admissible.  A word  $w \in \mathscr{A}^{\mathbb{Z}}$ is  \emph{infinitely derivable} if it is derivable and for each $n$ the result of deriving the sequence $n$ times is again admissible. 

In \cite{SU:sym} we proved that, given a cutting sequence of a trajectory $\tau$, the derived  sequence is again a cutting sequence (see Proposition 2.1.19 in \cite{SU:sym}) by defining a \emph{derived trajectory} $\tau'$ such that $c(\tau)'=c(\tau')$. The trajectory $\tau'$ is the result of applying a particular affine automorphism to the trajectory $\tau$. (Affine automorphisms are defined in section \ref{discsec}. The particular affine automorphisms used are  defined in section \ref{octagondisk}.) 
The following necessary condition on cutting sequences is a consequence. 
\begin{thm}\label{cutseqinfderiv}
A  cutting sequence $c(\tau)$ is infinitely derivable.
\end{thm}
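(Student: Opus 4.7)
The plan is to prove Theorem \ref{cutseqinfderiv} by a straightforward induction on the number of derivations, using as the single nontrivial ingredient the (already quoted) fact that deriving the cutting sequence of a trajectory yields the cutting sequence of another linear trajectory, namely the image of $\tau$ under an appropriate affine automorphism.

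First I would set up the base case. Let $\tau$ be a bi-infinite linear trajectory on $S_O$. By Lemma 2.3 of \cite{SU:sym} (the statement that ``cutting sequences are admissible'', recalled in the excerpt immediately after the definition of admissibility), the sequence $c(\tau)$ is admissible in some diagram $\mathscr{D}_i$. In particular $c(\tau)$ is admissible, which handles the $n=0$ case.

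For the inductive step, suppose that the $n$-th derived sequence $c(\tau)^{(n)}$ is defined and is admissible. The key input from \cite{SU:sym} (Proposition 2.1.19, as cited in the paragraph before the theorem) provides a derived trajectory $\tau^{(n)}$, obtained by applying a specific affine automorphism of $S_O$ (constructed in \S\ref{octagondisk}) to the previous trajectory, such that
\[
c(\tau)^{(n)} = c\bigl(\tau^{(n)}\bigr).
\]
Thus $c(\tau)^{(n)}$ is itself the cutting sequence of a bi-infinite linear trajectory on $S_O$. Applying Lemma 2.3 of \cite{SU:sym} once more to $\tau^{(n)}$, we conclude that $c(\tau)^{(n)}$ is admissible, and then applying Proposition 2.1.19 once more, its derivative $c(\tau)^{(n+1)} = c(\tau^{(n+1)})$ is well defined and is again a cutting sequence, hence admissible. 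This completes the induction and shows that $c(\tau)$ is derivable at every stage, i.e.\ infinitely derivable.

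The proof is essentially cost-free once the two external facts are granted; the only subtlety is the observation that derivability of a sequence $w$ requires admissibility both of $w$ and of $w'$, so one must cite admissibility of cutting sequences at \emph{each} stage of the induction, not just at the beginning. The ``hard part'' is therefore not in this theorem itself but in the two cited results from \cite{SU:sym}: that cutting sequences are admissible (a finite case analysis of transitions allowed in each sector) and that derivation at the symbolic level corresponds to applying an affine automorphism at the geometric level (the content of Proposition 2.1.19 of \cite{SU:sym}). Once those are in place, Theorem \ref{cutseqinfderiv} is an immediate corollary of the inductive loop above.
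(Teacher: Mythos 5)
Your proof is correct and follows exactly the route the paper intends: the theorem is stated there as an immediate consequence of the two facts you cite (admissibility of cutting sequences and the fact that the derived sequence of a cutting sequence is again a cutting sequence, via the derived trajectory of Proposition 2.1.19 of \cite{SU:sym}), with the induction left implicit. Your explicit note that admissibility must be invoked at every stage, not just the base case, is a worthwhile clarification but not a departure from the paper's argument.
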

\noindent Theorem \ref{cutseqinfderiv} gives a necessary condition for a sequence to be a cutting sequence. In \cite{SU:sym} we  also gave a sufficient condition for a sequence to be in the closure of the set of cutting sequences with respect to the natural topology on the sequence space. 

\subsection{Renormalization schemes.}\label{renormschemessec}

In this section we recall the definitions of three combinatorial algorithms introduced in \cite{SU:sym}, one acting on directions (the octagon Farey map in \S\ref{Fareymapsec}), one acting symbolically on cutting sequences of trajectories (recalled in \S\ref{combrenormsec})
 and the latter acting on trajectories (see \S\ref{trajrenormsec}). 
Each of these algorithms can be viewed as an example of the concept of renormalization. 

The idea of \emph{renormalization} is a key tool which has been used to investigate flows on translation surfaces.  This idea was borrowed from physics and it appears in one dimensional dynamics as well. The idea is to study the behavior of a dynamical system by including it into a space of dynamical systems and introducing a renormalization operator on this space of dynamical systems. This renormalization operator acts by rescaling both time and space. The long term behavior of our original system can be analyzed in terms of the behavior of the corresponding point under the action of the renormalization operator. Let us call the choice of a space of dynamical systems and of a renormalization operator a ``renormalization scheme". Examples of renormalization schemes include  the  Teichm\"uller flow on the moduli space of translation surfaces and Rauzy-Veech induction on the space of interval exchange transformations. The study of linear flows on the torus can be put in this framework:
we can think of the geodesic flow on the modular surface and the continued fraction algorithm as renormalization schemes. The geodesic flow on the modular surface is a special case of the Teichm\"uller flow and the continued fraction algorithm is closely related to the Rauzy-Veech induction.
In this paper we define a renormalization scheme suited to study linear trajectories on the octagon (see  \S\ref{TeichdiskrenormCF}), where the renormalization operator  acts by \emph{affine automorphisms} which gives a discrete approximation of the Teichm\"uller geodesic flow on the appropriate Teichm\"uller orbifold of the octagon. 
In \S\ref{TeichdiskrenormCF} we 
 we make the connection between this renormalization scheme and the three algorithms described below.

\subsubsection{The Octagon Farey map}\label{Fareymapsec}
We now define the \emph{octagon Farey map}. Let $\RP$ be the space of lines in $\R^2$.  There are two coordinate systems on $\RP$ which will prove to be useful in what follows.  The first is the inverse slope coordinate, $u$. A line in $\R^2$ is determined by a non-zero column vector with coordinates $x$ and $y$. We set $u((x,y))=x/y$. 
A linear transformation of $\R^2$ induces a projective transformation of $\RP$. The group of projective transformations of $\RP$ is $PGL(2,\R)$ and the kernel of the natural homomorphism from $GL(2,\R)$ to $PGL(2,\R)$ consists of $\pm Id$. The elements of $PGL(2,\R)$ correspond to linear fractional transformations.
 If $L= \left(\begin{smallmatrix}a& b \\ c& d  \end{smallmatrix} \right)$ is a matrix in $GL(2, \mathbb{R})$, we will denote by $L[x]$ the associated linear fractional  transformation, given by $ L[x]= \frac{ax+b}{cx+d}$. This linear fractional transformation records the action of $L$ on the space of directions in inverse slope coordinates.
 The second useful coordinate is the angle coordinate $\theta \in [0, \pi]$.  Where $\theta$ corresponds to the line generated by the vector with coordinates $x=\cos(\theta)$ and $y=\sin(\theta)$. Note that since we are parametrizing lines rather than vectors $\theta$ runs from $0$ to $\pi$ rather than from $0$ to $2\pi$. 

An interval in $\RP$ corresponds to a collection of lines in $\R^2$. Following the conventions of \cite{SU:sym} we will think of such an interval as corresponding to a sector in the upper half plane.
We will denote by $\Sigma_i$ the  sector of $\RP$ corresponding to the angle coordinate sectors $ \left[ i \pi /8 ,  (i+1)\pi/8 \right)$ for $i=0,\dots, 7$, each  of length $\pi/8$ in $[0,\pi)$. Let us stress that $\Sigma_i$  is a sector in  $\RP$ and we will abuse notation by writing $u \in \Sigma_i$ or $\theta \in \Sigma_i$, meaning that the coordinates belong to the corresponding interval of coordinates.

The isometry group of the octagon is the dihedral group $\Die_8$ (of order 16). This group acts on $\RP$ but the action is not faithful. The center of $\Die_8$ consists in the usual matrix representation of $\Die_8$ of $\pm I$, where $I$ denotes the identity matrix, and it acts trivially. The group $\Die_8/\{ \pm I\}$ acts faithfully and this group is isomorphic to $\Die_4$.
  Let $\overline{\Sigma}_i : = [i \pi /8 ,  (i+1)\pi/8]$ the corresponding closed intervals. The set ${\overline\Sigma}_0$ is a  fundamental domain for the action of the dihedral group $\Die_4$.
Let $\nu_i\in\Die_8$ be the isometry which sends $\overline{\Sigma}_i$ to $\overline{\Sigma}_0$.
These elements are represented by the following matrices:
\be \label{nujdef}
\begin{split} &
\nu_0= \begin{pmatrix} 1 & 0 \\ 0 & 1 \end{pmatrix} \phantom{-} \, \, 
\nu_1= \begin{pmatrix}  \frac{1}{\sqrt{2}} &  \frac{1}{\sqrt{2}} \\  \frac{1}{\sqrt{2}} & - \frac{1}{\sqrt{2}} \end{pmatrix}\, \, 
\nu_2= \begin{pmatrix} \frac{1}{\sqrt{2}} &  \frac{1}{\sqrt{2}} \\ - \frac{1}{\sqrt{2}} &  \frac{1}{\sqrt{2}} \end{pmatrix}  \, \, 
\phantom{-} 
\nu_3=\begin{pmatrix}0& 1 \\ 1& 0  \end{pmatrix} \\ & 
\nu_4=\begin{pmatrix} 0 & 1 \\ -1 & 0 \end{pmatrix} \, \, 
\nu_5= \begin{pmatrix} - \frac{1}{\sqrt{2}} &  \frac{1}{\sqrt{2}} \\  \frac{1}{\sqrt{2}} &  \frac{1}{\sqrt{2}} 
\end{pmatrix}\, \, 
\nu_6=  \begin{pmatrix} - \frac{1}{\sqrt{2}} &  \frac{1}{\sqrt{2}} \\ - \frac{1}{\sqrt{2}} & - \frac{1}{\sqrt{2}} \end{pmatrix}\, \, 
\nu_7= \begin{pmatrix} -1 & 0 \\ 0 & 1 \end{pmatrix}.
\end{split}
\ee
\noindent  The images of these matrices in $PGL(2,\R)$ give the group $\Die_4$. When we refer to group operations using $\nu_j$ we are thinking of their image in $PGL(2,\R)$.

Let
\bes
\gamma : = \begin{pmatrix} -1 &  2(1+\sqrt{2})  \\ 0  & 1  \end{pmatrix}. 
\ees
Let us denote by $\Sigma:= \Sigma_1 \cup \dots \cup \Sigma_7$. The map $\gamma$ takes sector $\overline{\Sigma}_0$ to the union $\overline{\Sigma}$.

 Let $F_i:\overline{\Sigma}_i\to\RP$ be the map induced by the linear map $\gamma\nu_i$ . Define the \emph{octagon Farey map} $F : \RP\to\RP$ to be the piecewise-projective map, whose action on the sector of directions corresponding to $\overline{\Sigma}_i$ is given by $F_i$. Observe that the maps $F_i$ fit together so that the resulting map $F$ is continuous.
 In the inverse slope coordinates  $F$ is a piecewise linear fractional transformation. If $ u  \in  \bar \Sigma_i$, we define
\bes \label{Fareymapdefinvslopes}
 F(u)  = \gamma \nu_i [u ] =  \frac{a_i u + b_i}{c_i u + d_i}, \qquad \mathrm{where} \, \, \begin{pmatrix} a_i & b_i \\ c_i & d_i \end{pmatrix}:= \gamma \nu_i  . 
\ees
The action in angle coordinates is obtained by conjugating  by conjugating by $\cot : [0,\pi] \rightarrow \mathbb{R}$, so that if $\theta \in \Sigma_i$ we have $F (\theta) =   \cot^{-1}\left( \frac{a_1 \cot(\theta) + b_i}{c_i \cot(\theta) + d_i}\right)$.
The graph of $F$ in angle coordinates is shown in Figure \ref{Fareygraph}. The map $F$ is pointwise expanding, but not uniformly expanding since the expansion constant tends to one at the endpoints of the sectors. Since each branch $F_i$ of $F$ is monotonic,  the inverse maps $F_i^{-1}:\Sigma\to\Sigma_i$, $ i=0, \dots, 7$ are well defined. 
\begin{figure}
\centering
{\includegraphics[width=0.5\textwidth,
]{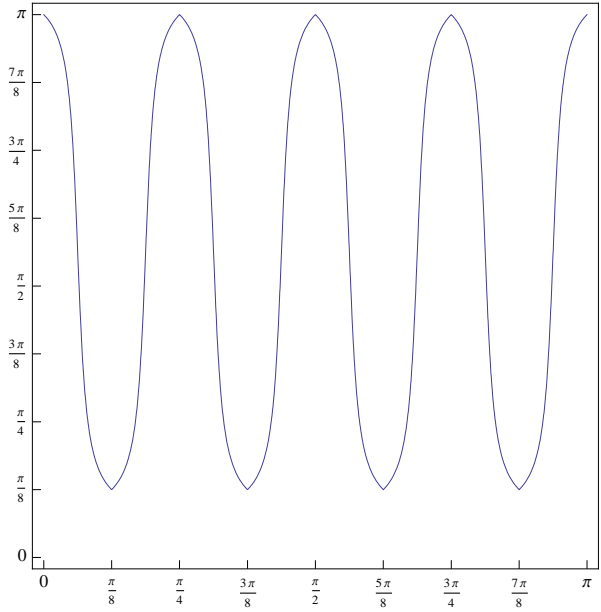}}
\caption{The graph of the octagon Farey map $F$.\label{Fareygraph}}
\end{figure}

The map $F$ gives an additive continued fraction algorithm for numbers in the interval $[0,\pi]$ as follows. 
 Let $S^*$ to be the set of all sequences $s=\{s_k\}$  for $k\ge 0$ that satisfy the condition $s_k\in \{0,\dots, 7\} $ and $s_k=0$ implies $k=0$. Given  $\{s_k\} \in S^* $, one can check that intersection $ \bigcap_{k \in \mathbb{N}} F_{s_0 }^{-1} F_{s_1 }^{-1} \cdots  F_{s_k}^{-1} [0, \pi] . $ is non empty and consists of a single point $\theta$. In this case we write
\be\label{sectorCFdef}
\theta = [s_0; s_1, \dots, s_k, \dots ]_O := 
 \bigcap_{k \in \mathbb{N}} F_{s_0 }^{-1} F_{s_1 }^{-1} \cdots  F_{s_k}^{-1} [0, \pi]  
\ee
 and say that $[s_0; s_1, \ldots ]$ is an \emph{octagon additive continued fraction expansion} of $\theta$ and that $s_0,s_1,\ldots$ are the \emph{entries} of the expansion of $\theta$. 
Let us call a direction $\theta$  \emph{terminating} if the  continued fraction expansion entries  of $\theta$ are eventually $1$ or eventually $7$.   With the exception of $\pi/8$ and $\pi$,   terminating  directions have two octagon additive continued fraction expansions. 

\subsubsection{Combinatorial renormalization and direction recognition.}\label{combrenormsec}
In this section we  describe the combinatorial renormalization scheme on cutting sequences of trajectories introduced in \cite{SU:sym}. This scheme is based on the operation of derivation on sequences but a key feature of this renormalization is that after deriving a sequence we put the sequence into a \emph{normal form} as explained below. The use of this convention  allows us to use the  continued fraction map in \S \ref{Fareymapsec} and to relate derivation to the geodesic flow on the Teichm\"uller orbifold of the octagon (see \S\ref{crosssec}). 

A permutation $\pi $ of $\mathscr{A}=\{A, B, C, D\}$ acts on sequences $w \in \mathscr{A}^{\mathbb{Z}}$ by permuting the letters of $w$ according to $\pi$; we denote this action by $\pi \cdot w$. 
We consider the following eight permutations of $\{A,B,C,D\} $ 
\bes
\begin{array}{llll}
\pi_0= Id & \pi_1 = (AD)(BC) & \pi_2 = (ABCD) & \pi_3= (AC)\\
\pi_4=(AC)(BD) & \pi_5 = (AB)(CD) & \pi_6 = (ADCB) & \pi_7 = (BD),
\end{array}
\ees
which are induced (see \cite{SU:sym})  by the action of the isometries $\nu_1, \dots, \nu_7 $ on cutting sequences, in the following  sense. 
Let $\nu_k\trajcdot \tau$ be the trajectory obtained by postcomposing $\tau$ with the isometry $\nu_k$. Then:
\be \label{relabelingeq}
c(\nu_k \trajcdot \tau) = \pi_k \cdot c(\tau), \qquad  \forall \quad 0\leq k \leq 7 .
\ee
\noindent Note that the central element of $\Die_8$ induces the trivial permutation so the image of $\Die_8$ in the permutation group is isomorphic to $\Die_4$.

Say that a word $w$ is admissible in a unique diagram $\mathscr{D}_j$. The \emph{normal form} of $w$ is the word $n(w):=\pi_j \cdot w_0$. We now define inductively a sequence of words $w_j$.
Set $w_0:= w$. Let us assume for now that $w_0$ is admissible in an \emph{unique} diagram with index $d_0$. The first step of the renormalization scheme  consist of taking the derived sequence of the normal form of $w_0$  and setting  $w_1 := n(w_0)'$. 
If after $k$ iterations the sequence obtained is again admissible in an \emph{unique} diagram, we  define
\be\label{renormalizedseq}
 w_{k+1}:= n(w_k)'.
\ee
In \cite{SU:sym} we showed that if $w$ is a \emph{non-periodic cutting sequence}, then the renormalization scheme described above is well defined for all $k\in \mathbb{N}$.   In this case, $w$ determines a unique infinite sequence $\{d_k\}_{k\in \mathbb{N}}$ such that $w_k$ is admissible in diagram  $d_k$. 
 This  sequence of admissible diagrams of a cutting sequence $c(\tau)$ can be used to recover the direction of the trajectory $\tau$ through the octagon additive continued fraction expansion, as follows.
\begin{thm}[Direction recognition, \cite{SU:sym}]\label{directionsthm}
If $w$ is a {non-periodic} cutting sequence, the direction of trajectories $\tau$ such that $w=c(\tau)$
is uniquely determined and given by 
$\theta=[d_0;d_1,\ldots, d_k, \ldots  ]$,  
where  $\{d_k\}_{k\in \mathbb{N}}$ 
is the sequence of admissible diagrams. 
\end{thm}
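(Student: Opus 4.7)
The plan is to show that the combinatorial renormalization on the cutting sequence mirrors, at the level of directions, the action of the octagon Farey map $F$, so that the sequence of admissible diagrams $\{d_k\}$ records precisely the itinerary of $\theta$ under $F$. Given this, the continued fraction expansion $[d_0; d_1, \dots]_O$ reproduces $\theta$ by definition (\ref{sectorCFdef}).

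First, I would establish the base case. Since $w_0 = c(\tau)$ is an admissible cutting sequence in diagram $\mathscr{D}_{d_0}$, and the diagrams were set up so that directions $\theta \in \overline{\Sigma}_j$ produce cutting sequences admissible in $\mathscr{D}_j$ (Lemma 2.3 of \cite{SU:sym}), we have $\theta \in \overline{\Sigma}_{d_0}$. Non-periodicity of $w$ rules out the boundary/terminating cases where two diagrams would be simultaneously admissible, so $d_0$ is determined unambiguously.

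Next I would carry out the inductive step, converting one stage of the combinatorial renormalization into one application of $F$. By the relabeling identity (\ref{relabelingeq}), applying the permutation $\pi_{d_0}$ to $w_0$ corresponds to replacing $\tau$ by $\nu_{d_0} \trajcdot \tau$, a trajectory whose direction $\nu_{d_0}[\theta]$ now lies in $\overline{\Sigma}_0$. By the relation between derivation and the derived trajectory recalled before Theorem \ref{cutseqinfderiv}, the derivative $(n(w_0))'$ is the cutting sequence of a further affinely deformed trajectory whose direction is $\gamma \nu_{d_0}[\theta] = F_{d_0}(\theta) = F(\theta)$. Thus $w_1$ is a cutting sequence in direction $F(\theta)$, and since $w_1$ is admissible in the unique diagram $\mathscr{D}_{d_1}$ we conclude $F(\theta) \in \overline{\Sigma}_{d_1}$. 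Iterating, $F^k(\theta) \in \overline{\Sigma}_{d_k}$ for every $k \ge 0$.

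Finally I would read off the conclusion. The previous step gives $\theta \in F_{d_0}^{-1} F_{d_1}^{-1} \cdots F_{d_k}^{-1}[0,\pi]$ for every $k$, so by the definition (\ref{sectorCFdef}) of the octagon additive continued fraction expansion, $\theta = [d_0; d_1, \dots, d_k, \dots]_O$. Uniqueness of $\theta$ is then automatic: the intersection defining the expansion is a single point in $\RP$, and non-periodicity of $w$ guarantees the sequence $\{d_k\}$ is not eventually constant equal to $1$ or $7$, ruling out the two ambiguous expansions of a terminating direction.

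The main obstacle I anticipate is verifying that the non-periodicity hypothesis truly ensures the uniqueness of the diagram $d_k$ at every stage of the renormalization; this is what makes the combinatorial map well-defined and the resulting itinerary $\{d_k\}$ unambiguous. It requires knowing that a sequence admissible in two distinct diagrams must be of a very restricted combinatorial form (eventually constant in some letter), which in turn forces the underlying trajectory to have a terminating direction, and in the cutting-sequence setting this only arises from (pre-)periodic $w$. Once this uniqueness is in hand, the geometric translation of each step via (\ref{relabelingeq}) and the derivation-via-affine-automorphism statement assemble routinely into the continued fraction identity above.
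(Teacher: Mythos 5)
Your proposal is correct and follows essentially the route the paper itself indicates: it does not prove the theorem here but cites \cite{SU:sym} (combining their Proposition 2.2.1 and Theorem 2.3.1), and the content of that argument is precisely what you reconstruct, namely that one step of normalize-and-derive acts on directions as one application of $F$ (this is Proposition \ref{relationsrenormalizations} in the present paper, so the sequence of admissible diagrams is the $F$-itinerary of $\theta$), after which the definition (\ref{sectorCFdef}) of the octagon additive continued fraction yields $\theta$ as the single point of the nested intersection. Your flagged obstacle --- that non-periodicity is what guarantees a unique admissible diagram at every stage --- is exactly the ingredient the paper also defers to \cite{SU:sym}.
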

 \noindent The proof of the Theorem appears in \cite{SU:sym}, combining Proposition 2.2.1 and Theorem 2.3.1.

\subsubsection{A renormalization scheme for trajectories.}\label{trajrenormsec}
Given a trajectory  $\tau$ in direction $\theta$, let $s(\tau)=k \in \{ 0, \dots, 8 \}$ be such that $\theta \in \Sigma_{k}$. We say that $s(\tau)$ is the \emph{sector} of $\tau$.  If the \emph{sector} of $\tau$ is $k$, the cutting sequence  $c(\tau)$ is \emph{admissible} in diagram $k$. 
Given a trajectory $\tau$ with  $s(\tau)= k$, we define the \emph{normal form} of the trajectory $\tau$ the trajectory $n(\tau):= \nu_k\trajcdot \tau$, obtained by postcomposing $\tau$ with $\nu_k$, which is the isometry that maps $\overline{\Sigma}_k$ to $\overline{\Sigma}_0$. 
The renormalization scheme on the space of trajectories is obtained by alternately putting in normal form and deriving trajectories as follows. 
Given a trajectory $\tau$, let us recursively define its sequence of renormalized trajectories $\{\tau_k\}_{k\in \mathbb{N}}$ by: 
\be\label{renormalizationtrajectories}
\tau_0:= \tau; \qquad \tau_{k+1}:= n( \tau_k)', \quad k\in \mathbb{N}. 
\ee
An explicit definition of the derived trajectory $n( \tau_k)'$ is given in (\ref{derivedtraj}) and uses the affine automorphism $\Psi_\gamma$ described in \S\ref{octagondisk}. 
Given a trajectory $\tau$, we denote by $\{s_k(\tau)\}_{k\in \mathbb{N}}$ the \emph{sequence of sectors} given by
$ s_k(\tau):= s(\tau_k)$, where $s(\tau_k)$ is the sector of the $k^{th}$ renormalized trajectory. In words, the sequence $\{s_k(\tau)\}_{k \in \mathbb{N}}$ is obtained  by 
recording the sectors of the renormalized trajectories. 
This renormalization operation on trajectories is a natural counterpart to the combinatorial renormalization and of the continued fraction, as shown by the following Proposition (see \cite{SU:sym} for the proof).
\begin{prop}\label{relationsrenormalizations}
Let $w$ be the cutting sequence $c(\tau)$ of a trajectory $\tau$ in direction $\theta$. Assume that the $k^{th}$ renormalized sequence $w_k$ given by (\ref{renormalizedseq}) is well defined. Then
$c(\tau_k) = w_k$.

Moreover, for each $k\in\mathbb{N}$, the direction $\theta_k$ of the $k^{th}$ renormalized trajectory $\tau_k$ is given by $\theta_k = F^k (\theta)$
where $F$ is the octagon Farey map. In particular, the sequence $\{s_k(\tau)\}_{k\in\mathbb{N}}$ of sectors of a trajectory $\tau$  in direction $\theta$ coincides with the itinerary of $\theta$ under the octagon Farey map $F$,
 i.e.~$F^{k} (\theta) \in \Sigma_{s_k(\tau)}$ for all  $k\in \mathbb{N}$. 
\end{prop}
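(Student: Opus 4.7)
The plan is to prove all three assertions simultaneously by induction on $k$, exploiting the fact that the two renormalization schemes (on trajectories and on sequences) are designed as parallel copies of one another, connected by the cutting sequence map $c$.

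The base case $k=0$ is immediate: $c(\tau_0)=c(\tau)=w=w_0$, $\theta_0=\theta=F^0(\theta)$, and $s_0(\tau)=s(\tau)$ is by definition the sector containing $\theta$. For the inductive step, assume $c(\tau_k)=w_k$ and $\theta_k=F^k(\theta)$. The first key ingredient is the compatibility of normal forms. Let $s_k := s(\tau_k)$, so that $\theta_k\in\overline\Sigma_{s_k}$. By construction $n(\tau_k)=\nu_{s_k}\trajcdot\tau_k$, and equation (\ref{relabelingeq}) gives
\[
c(n(\tau_k)) = c(\nu_{s_k}\trajcdot\tau_k) = \pi_{s_k}\cdot c(\tau_k) = \pi_{s_k}\cdot w_k.
\]
I need to verify that $s_k$ coincides with the unique diagram index $d_k$ in which $w_k$ is admissible, so that $\pi_{s_k}\cdot w_k = n(w_k)$. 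This is the main conceptual step; it follows from the fact (established in \cite{SU:sym}) that the cutting sequence of a trajectory with direction in $\Sigma_j$ is admissible in diagram $\mathscr{D}_j$, applied to $\tau_k$ whose direction lies in $\Sigma_{s_k}$ (and using non-periodicity/uniqueness of the admissible diagram at each stage of the renormalization scheme).

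The second key ingredient is that derivation commutes with the cutting sequence map: $c(\sigma)' = c(\sigma')$ for any admissible trajectory $\sigma$, which is precisely Proposition 2.1.19 of \cite{SU:sym}. Applying this to $\sigma=n(\tau_k)$ gives
\[
c(\tau_{k+1}) = c\bigl(n(\tau_k)'\bigr) = c(n(\tau_k))' = n(w_k)' = w_{k+1},
\]
completing the inductive step for assertion one. For the direction statement, recall that the derived trajectory $n(\tau_k)'$ is defined via the affine automorphism $\Psi_\gamma$ of \S\ref{octagondisk} whose linear part is $\gamma$, so on directions (in inverse slope coordinates) it acts by the linear fractional transformation $\gamma[\,\cdot\,]$. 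Hence
\[
\theta_{k+1} = \gamma\bigl[\nu_{s_k}[\theta_k]\bigr] = \gamma\nu_{s_k}[\theta_k] = F_{s_k}(\theta_k) = F(\theta_k),
\]
since $\theta_k\in\overline\Sigma_{s_k}$ and $F$ coincides with the branch $F_{s_k}=\gamma\nu_{s_k}[\,\cdot\,]$ on that sector. This closes the induction for $\theta_k=F^k(\theta)$.

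Finally, the itinerary statement is a direct consequence: by definition $s_k(\tau)=s(\tau_k)$ is the index $j$ with $\theta_k\in\Sigma_j$, and since $\theta_k=F^k(\theta)$ we obtain $F^k(\theta)\in\Sigma_{s_k(\tau)}$ for every $k$, which is exactly the $F$-itinerary of $\theta$. The main obstacle in this argument is the first inductive step—namely, identifying the sector $s_k$ of the trajectory with the admissible diagram $d_k$ of its cutting sequence, so that the two notions of \emph{normal form} (for trajectories via $\nu_{s_k}$ and for sequences via $\pi_{d_k}$) align under $c$; once this identification is in hand the rest of the proof is essentially bookkeeping built on (\ref{relabelingeq}) and $c(\sigma')=c(\sigma)'$.
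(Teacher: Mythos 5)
Your proof is correct, and since the paper itself does not prove this proposition (it defers to \cite{SU:sym}), your induction --- combining (\ref{relabelingeq}), the fact that a trajectory with sector $j$ has cutting sequence admissible in diagram $\mathscr{D}_j$ (which together with the assumed uniqueness of the admissible diagram forces $d_k=s_k$), and the identity $c(\tau')=c(\tau)'$ realized by $\Psi_\gamma$ in (\ref{derivedtraj}) --- is exactly the intended argument. The only point to state carefully is that $c(\sigma')=c(\sigma)'$ is available here only for trajectories $\sigma$ with direction in $\overline{\Sigma}_0$, which is satisfied by $\sigma=n(\tau_k)$, so your application is legitimate.
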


\section{Moduli spaces, the Teichm\"uller disk and the Veech group}\label{discsec}

The term moduli space is often used to refer specifically to the space of conformal structures on surfaces. We will use it here in the more general sense of a topological space which parametrizes a family of geometric structures on a surface. We will be interested in two moduli spaces in particular. Given a translation surface $S$ we want to consider the space of affinely equivalent (marked) translation surfaces up to translation equivalence and  the space of (marked) translation surfaces up to isometry (see section  \S\ref{Teichdisksec}). The second space is the Teichm\"uller disk of $S$ and the first is closely related to the $SL(2,\R)$ orbit of the surface in a stratum. 
  Our approach differs from the standard one in a number of respects. In particular we consider both orientation preserving an orientation reversing affine automorphisms. We are lead to do this because orientation reversing affine automorphisms play and important role in our renormalization schemes.  A second novelty is that we construct our moduli space directly and not as a subset of a larger ``stratum" of translation surfaces. In \S\ref{Teichorbsec} we define the Teichm\"uller orbifold and in \S\ref{isoDelaunaysec} we define the iso-Delaunay tessellation of the Teichm\"uller disk.

\paragraph{Affine diffeomorphisms.} 
Let $S$ and $S'$ be translation surfaces and let $\Sigma$ and $\Sigma'$ the respective conical singularities sets.  Consider a homeomorphism $\Phi$ from $S$ to $S'$ which takes $\Sigma$ to $\Sigma'$ and is a diffeomorphism outside of $\Sigma$. We can identify the derivative $D\Phi_p$ with an element of $GL(2,\mathbb{R})$. We say that $\Phi$ is an \emph{affine diffeomorphism} if the $D\Phi_p$ does not depend on the point $p$. In this case we write $D\Phi$ for $D\Phi_p$. Clearly an affine diffeomorphism $\Phi : S \rightarrow S'$ sends infinite linear trajectories on $S$ to infinite linear trajectories on $ S'$. 
If $\tau$ is a linear trajectory on $S$,   we  denote by $\Phi {\tau}$ the linear trajectory on $ S'$ which is obtained by composing $\tau$ with $\Phi$.

We say that $S$ and $S'$ are \emph{affinely equivalent} if there is an affine diffeomorphism $\Phi$ between them.  
We say that $S$ and $S'$ are \emph{isometric} if they are affinely equivalent with $D\Phi\in O(2)$. We say that $S$ and $S'$ are \emph{translation equivalent} if they are affinely equivalent with $D\Phi=Id$.  If $S=\bigcup P_j/\sim$ and $S'=\bigcup_k P'_k/\sim$  then a translation equivalence $\Upsilon$ from $S$ to $S'$ can be given by a \emph{cutting and pasting} map, that is to say, we can subdivide the polygons $P_j$ into smaller polygons and define a map $\Upsilon$ so that  the restriction of $\Upsilon$ to each of these smaller polygons is a translation and the image of $\Upsilon$ is the collection of polygons $P'_k$. We require that appropriate identifications be respected.
A concrete example is given in \S\ref{octagondisk}, see Figure \ref{shearedoctagon1}. 

An affine diffeomorphism from $S$ to itself is an \emph{affine automorphism}. The collection of affine diffeomorphisms is a group which we denote by $\Aff(S)$. 
The collection of isometries of $S$ is a finite subgroup of $\Aff(S)$ and  the collection of translation equivalences is a subgroup of the group of isometries. 

\paragraph{The canonical map.}
Let $S$ be a translation surface given by  $S =\bigcup P_j/\sim$.  Given  $\nu \in GL(2,\R)$, we denote by $\nu  P \subset \mathbb{R}^2$ the image of a polygon $P \subset \mathbb{R}^2$ under the linear map $\nu$.   The translation surface $S'=\nu \cdot S$ is obtained by gluing the corresponding sides of $\nu \octcdot P_1, \dots, \nu \octcdot P_n $. There is a \emph{canonical map} $\Phi_\nu$ from the surface $S$ to the surface $\nu \cdot S$ which is given by the restriction of the linear map $\nu$ to the polygons $P_1 , \dots ,  P_n$.
There is a connection between canonical maps and affine automorphisms.
We can concretely realize an affine automorphism of $S$ with derivative $\nu$ as a composition of the canonical map $\Phi_\nu: S\to\nu\cdot S$ with a translation equivalence, or cutting and pasting map, $\Upsilon:\nu\cdot S \to S$.

\paragraph{The Veech group.}
The \emph{Veech homomorphism} is the homomorphism $\Psi\mapsto D\Psi$  from $\Aff(S)$ to $GL(2,\R)$.  The kernel of the Veech homomorphism is  the finite group of translation equivalences of $S$.  
\begin{rem}\label{trivialkernel} The kernel of the Veech homomorphism is trivial if and only if an affine automorphism of $S$ is determined by its derivative. Moreover, if  the kernel of the Veech homomorphism is trivial, given $\Psi \in V(S)$ with derivative $\nu$ there is a \emph{unique} translation equivalence  $\Upsilon$ such that $\Psi = \Upsilon \Phi_{\nu}$ where $\Phi_{\nu}$ is the canonical map.
  \end{rem}
\noindent 
The image of the Veech homomorphism is  a discrete subgroup of the subgroup  $SL_{\pm}(2,\mathbb{R})$ of matrices with determinant $\pm1$. 
We call the image \emph{Veech group} and denote it by $V(S)$.  
We write $V^+(S)$ for the image of the group of \emph{orientation preserving} affine automorphisms in $SL(2,\R)$.  We denote the image of $V(S)$ (respectively  $V^+(S)$) in $PGL(2,\R)$ by $V_P(S)$ (respectively $V_P^+(S)$).  We note that the term Veech group is used by most authors to refer to the group that we call $V^+(S)$. Some authors use the term Veech group to refer to the  the group $V^+_P(S)$. Since we will make essential use of orientation reversing affine automorphisms we use the term Veech group for the larger group $V(S)$.   

A translation surface $S$ is called a \emph{lattice surface} if  $V^+(S)$ is a lattice in $SL(2, \mathbb{R})$ or, equivalently, $V(S)$ is a lattice in $SL_{\pm}(2, \mathbb{R})$. The torus $T^2=\mathbb{R}^2 / \mathbb{Z}^2$ is an example of a lattice surface whose  Veech group is $GL(2, \mathbb{Z})$. Veech proved  more generally that all  translation surfaces obtained from regular polygons are lattice surfaces, see \cite{Ve:tei}.

\paragraph{Delaunay triangulations.}\label{Delaunaysec}
Let ${T}$ be a triangulation of a translation surface $S$.  (When we speak of triangulations we do not demand that distinct edges have distinct vertexes. We allow decompositions into triangles so that the lift to the universal cover is an actual triangulation. These decompositions are called $\Delta$-complexes in \cite{Ha:alg}.) A natural class of triangulations to consider are those for which the edges are saddle connections (see \cite{MS:Hau}). A typical such triangulation will have long and thin triangles. The following class of triangulations have triangles which tend to have diameters which are not too large.

A triangulation ${T}$ is a \emph{Delaunay triangulation} of $S$ if (1) each side of a triangle is a geodesic w.~r.~t.~the flat metric of $S$, (2) the vertexes of the triangulation are contained in the singularity set $\Sigma$ and (3) for each triangle, there is an immersed euclidean disk which contains on its boundary the three vertexes of the triangle and does not contain in its interior any other singular point of $\Sigma$. The last condition is the \emph{Delaunay condition}. 
Another equivalent way of expressing condition (3), used e.~g.~by Rivin \cite{Ri:euc} and Bowman \cite{Bo:vee}, is the following $(3)'$ for each for each side $e$ of $T$ and pair of triangles $t_1$ and $t_2$ which share  $e$ as a common edge, the \emph{dihedral angle} $d(e)$, which is the sum of the two angles opposite to $e$ in $t_1$ and $t_2$ respectively, satisfies $d(e) \leq \pi$. We refer to \S\ref{octagondisk} for concrete examples of Delaunay triangulations. 

Let us call a \emph{Delaunay switch} the move from a triangulation $T$ to a new triangulation $T'$ which is obtained by replacing an edge $e$ shared by two triangles $t_1$ and $t_2$ with the opposite diagonal of the quadrilateral formed by $t_1$ and $t_2$. Starting from any $T$ satisfying (1) and (2) one can obtain a Delaunay triangulation by a finite series of Delaunay switches which decrease the dihedral angles. 
Thus, each flat surface admits a Delaunay triangulation, not necessarily unique (for example, Figure \ref{triangles1}, \ref{triangles2} give two Delaunay triangulations of $S_O$).  
The Delaunay triangulations fails to be unique exactly when there is an immersed disk which contains four points or more points of $\Sigma$ on its boundary. When this happens, some dihedral angle $d(e)$ is equal to $\pi$, since a quadrilateral inscribed in a circle is part of the triangulation. 

\subsection{The Teichm\"uller disk of a translation surface}\label{Teichdisksec}
In this section we will describe  the Teichm\"uller disk of a translation surface $S$ as a space of marked translation surfaces.

Let $S$ be a translation surface.  Consider a triples $f:S\to S'$ where $f$ is an affine diffeomorphism and the area of $S'$ is equal to the area of $S$. We say that the translation surface $S'$ is \emph{marked} (by $S$).  Using the convention that a map determines its range and domain we can identify a triple with a map and denote it by $[f]$. We say two triples  $f:S\to S'$ and $g:S\to S''$ are equivalent if there is a translation equivalence $h:S'\to S''$ such that $g=fh$. 
Let ${\cal{\tilde M}}_A(S)$ be the set of equivalence classes of triples. We call this the set of \emph{marked translation surfaces affinely equivalent to $S$}. There is a canonical basepoint corresponding to the identity map $id:S\to S$. 

\begin{prop} \label{affineequiv} The set ${\cal{\tilde M}}_A(S)$ can be canonically identified with $SL_\pm(2,\mathbb{R})$.
\end{prop}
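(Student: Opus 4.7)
The plan is to exhibit the identification explicitly via the Veech-type map $[f]\mapsto Df$, and verify that it is a well-defined bijection onto $SL_{\pm}(2,\mathbb{R})$. The four things to check are (i) $Df$ depends only on the equivalence class $[f]$, (ii) $Df$ actually lies in $SL_{\pm}(2,\mathbb{R})$, (iii) the map is injective, and (iv) the map is surjective. None of these should be hard; the only real content is in (iv), where I want to make sure the canonical map construction $\Phi_\nu$ introduced just above the proposition gives us a genuine preimage.

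First, well-definedness. If $f:S\to S'$ and $g:S\to S''$ are equivalent triples, there is a translation equivalence $h:S'\to S''$ with $g=h\circ f$. Since $Dh=\mathrm{Id}$ by definition of translation equivalence, the chain rule gives $Dg=Dh\cdot Df=Df$, so the derivative is an invariant of the class. Next, since the flat area of a translation surface transforms under an affine diffeomorphism by the factor $|\det Df|$, the hypothesis $\mathrm{area}(S')=\mathrm{area}(S)$ forces $|\det Df|=1$, i.e.\ $Df\in SL_\pm(2,\mathbb{R})$.

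For injectivity, suppose $[f]$ and $[g]$ map to the same $\nu\in SL_\pm(2,\mathbb{R})$, with $f:S\to S'$ and $g:S\to S''$. Then $h:=g\circ f^{-1}:S'\to S''$ is an affine diffeomorphism whose derivative is $Dg\cdot (Df)^{-1}=\nu\nu^{-1}=\mathrm{Id}$, so $h$ is a translation equivalence and $g=h\circ f$, giving $[f]=[g]$.

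For surjectivity, I will use the canonical map construction. Given any $\nu\in SL_\pm(2,\mathbb{R})$, consider $\nu\cdot S$, the translation surface obtained by applying $\nu$ to each polygon of $S$ (the side identifications remain compatible because $\nu$ is linear, hence commutes with translations up to a translation). The canonical map $\Phi_\nu:S\to\nu\cdot S$ is an affine diffeomorphism with $D\Phi_\nu=\nu$, and $\mathrm{area}(\nu\cdot S)=|\det\nu|\cdot\mathrm{area}(S)=\mathrm{area}(S)$, so $[\Phi_\nu]\in\tilde{\mathcal{M}}_A(S)$ and it maps to $\nu$. This also confirms the existence of a canonical basepoint: $\nu=\mathrm{Id}$ corresponds to the class of $id:S\to S$.

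The only step requiring a small amount of care is verifying that $\nu\cdot S$ is well-formed as a translation surface (the parallelism of identified sides and the opposite-normals condition are preserved by the linear action of $\nu$), and this is routine given the setup earlier in the section. The main potential subtlety is simply to ensure that ``affinely equivalent'' in the definition of $\tilde{\mathcal{M}}_A(S)$ is being used in the same sense as the map $\Phi_\nu$ provides; this is immediate from the paragraph on the canonical map.
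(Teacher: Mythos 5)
Your proposal is correct and follows essentially the same route as the paper: the identification is $[f]\mapsto Df$ with the canonical map $\Phi_\nu$ providing preimages, and your injectivity argument (showing $g\circ f^{-1}$ is a translation equivalence) is the same computation the paper uses to verify that $\iota_1\iota_2=\mathrm{Id}$. The only cosmetic difference is that the paper packages the bijection as a pair of mutually inverse maps rather than checking injectivity and surjectivity separately.
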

\begin{proof} Given $\eta\in SL_{\pm}(2,\mathbb{R})$ and a translation surface $S$, we described  at the beginning of section \S\ref{discsec} a new translation surface $\eta\cdot S$ and a canonical map $\Phi_\eta:S\to \eta\cdot S$ where $D\Phi_\eta=\eta$. Define a function $\iota_1$ from $SL_\pm(2,\mathbb{R})$ to the set of triples which takes $\eta$ to the triple $\Phi_\eta: S\to \eta\cdot S$. Since the map $\eta$ multiplies area by a factor of $|\det  \eta|$ it follows that $area(S)=area(S')$.
We now define a map $\iota_2$ from triples to $SL_\pm(2,\mathbb{R})$ and show that $\iota_1\iota_2=\iota_2\iota_1=Id$. Let $f:S\to S'$ be a triple and let $\iota_2([f])$ be the matrix $Df$. Let us check that $\iota_2$ is well defined. If $[f]$ and $[g]$ are equivalent triples then there is an $h$ with $hf=g$ and $Dh=Id$. Applying the chain rule we have $DhDf=Dg$ so $Df=Dg$.
The composition $\iota_2 \iota_1$ takes $\eta$ to $D\Phi_\eta$. Since $D\Phi_\eta=\eta$ this composition is the identity. The composition $\iota_1 \iota_2$ takes a triple $f:S\to S'$ to the triple $\Phi_\eta:S\to \eta\cdot S$ with $\eta=Df$. To show that this composition is the identity we need to show that these two triples are equivalent. Let $h=\Phi_\eta  f^{-1}$. By definition we have $hf=\Phi_\eta$. We need to check that $h$ is a translation equivalence. We have $Dh=D\Phi_\eta Df^{-1}=Df Df^{-1}=Id$.
\end{proof}

\paragraph{The $SL_\pm(2,\R)$ action and the Veech group action.}
There is  a natural \emph{left action} of the subgroup $SL_\pm(2,\R)\subset GL(2,\R) $ on ${\cal{\tilde M}}_A(S)$. Given a triple $f:S\to S'$ and an $\eta\in SL_\pm(2,\R)$,   we consider the canonical map $\Phi_\eta:S'\to S''$ defined at the beginning of \S\ref{discsec}. We get the action by sending $[f]$ to $\Phi_\eta  f:S\to S''$. 
The previous proposition shows that this action acts simply transitively on ${\cal{\tilde M}}_A(S)$. Using the identification of ${\cal{\tilde M}}_A(S)$ with $SL_\pm(2,\R)$ this action corresponds to left multiplication by $\eta$.
There is a natural \emph{right action} of $\Aff(S)$ on the set of triples. Given an affine automorphism $\Psi:S\to S$ we send $f:S\to S'$ to $f \Psi:S\to S'$. This action induces a right action of $V(S)$ on ${\cal{\tilde M}}_A(S)$. Using the identification of ${\cal{\tilde M}}_A(S)$ with $SL_\pm(2,\R)$ this action corresponds to right multiplication by $D\Psi$. It follows from the associativity of composition of functions that \emph{these two actions commute}.

\paragraph{Isometry classes}
We would also like to consider marked translation surfaces up to \emph{isometry}. We say that two triples $f:S\to S'$ and $g:S\to S''$ are equivalent up to isometry if there is an isometry $h:S'\to S''$ such that $g=fh$.  Let ${\cal{\tilde M}}_{I}(S)$ be the collection of isometry classes of triples. 
Let us denote by  $\mathbb{H}$ the upper half plane, i.e.~$\{ z \in \mathbb{C} |\,  \Im z >0\}$ and by $\mathbb{D}$ the unit disk, i.e.~$\{ z \in \mathbb{C} | \, |z|<  1 \}$. In what follows, we will identify  them by the conformal map $\phi: \mathbb{H} \rightarrow \mathbb{D}$ given by $\phi(z) = \frac{z-i}{z+i}$. 
\begin{prop} \label{isometryequiv} The space ${\cal{\tilde M}}_{I}(S)$ of marked translation surfaces up to isometry is isomorphic to $\mathbb{H}$ (hence to $\mathbb{D}$). 
\end{prop}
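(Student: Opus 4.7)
The strategy is to push Proposition \ref{affineequiv} down by the further equivalence relation of isometry and recognize the resulting quotient as the classical symmetric space of $SL(2,\R)$.

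First, by Proposition \ref{affineequiv} I identify ${\cal{\tilde M}}_A(S)$ with $SL_\pm(2,\R)$ via $[f : S \to S'] \mapsto Df$. Isometry equivalence is a coarsening of translation equivalence, so it descends to an equivalence relation on $SL_\pm(2,\R)$ through this identification. I claim it is exactly the relation of lying in the same \emph{left} coset of $O(2)$. Indeed, if $g = h \circ f$ with $h : S' \to S''$ an isometry, the chain rule gives $Dg = Dh \cdot Df$, and $Dh \in O(2)$ since $h$ is an isometry. Conversely, given $[f : S \to S']$ and $A \in O(2)$, the canonical map $\Phi_A : S' \to A \cdot S'$ is an isometry with $D\Phi_A = A$, and $[\Phi_A \circ f]$ corresponds to $A \cdot Df$. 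Hence ${\cal{\tilde M}}_I(S)$ is canonically in bijection with the left coset space $O(2) \backslash SL_\pm(2,\R)$.

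Next I reduce $O(2)\backslash SL_\pm(2,\R)$ to $SO(2) \backslash SL(2,\R)$. Let $\sigma = \left(\begin{smallmatrix} -1 & 0 \\ 0 & 1 \end{smallmatrix}\right) \in O(2)$. If $M \in SL_\pm(2,\R)$ has $\det M = -1$, then $\sigma M \in SL(2,\R)$, so every $O(2)$-orbit meets $SL(2,\R)$. On the other hand, if $M_1, M_2 \in SL(2,\R)$ satisfy $M_2 = A M_1$ with $A \in O(2)$, then comparing determinants forces $\det A = 1$, i.e.\ $A \in SO(2)$. Hence the inclusion $SL(2,\R) \hookrightarrow SL_\pm(2,\R)$ induces a bijection
$$SO(2)\backslash SL(2,\R) \;\longrightarrow\; O(2) \backslash SL_\pm(2,\R).$$

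Finally I invoke the classical identification $SO(2) \backslash SL(2,\R) \cong \mathbb{H}$. The group $SL(2,\R)$ acts on $\mathbb{H}$ by Möbius transformations $\left(\begin{smallmatrix} a & b \\ c & d \end{smallmatrix}\right) \cdot z = \frac{az+b}{cz+d}$, this action is transitive, and the stabilizer of $i \in \mathbb{H}$ is precisely $SO(2)$. The assignment $SO(2) \cdot g \mapsto g^{-1} \cdot i$ is well defined on left cosets and furnishes a bijection $SO(2) \backslash SL(2,\R) \to \mathbb{H}$. Composing with the conformal map $\phi : \mathbb{H} \to \mathbb{D}$ specified in the statement then identifies ${\cal{\tilde M}}_I(S)$ with $\mathbb{D}$.

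The only real subtlety is bookkeeping for the orientation-reversing component: one must verify both that left multiplication by the orientation-reversing part of $O(2)$ acts freely enough to identify orbits meeting $SL(2,\R)$ with $SO(2)$-orbits, and that the construction $h = \Phi_A$ genuinely produces an isometry realizing every $A \in O(2)$. Both points are straightforward once the conventions of Proposition \ref{affineequiv} are in hand, so I expect no substantive obstacle beyond careful handling of cosets.
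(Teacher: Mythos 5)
Your proof is correct, but it takes a genuinely different route from the paper's. You work entirely on the group side: you push the identification ${\cal{\tilde M}}_A(S)\cong SL_\pm(2,\R)$ of Proposition \ref{affineequiv} down by the isometry relation, show this relation is exactly left multiplication by $O(2)$ (both directions of that claim are handled correctly, including realizing each $A\in O(2)$ by the canonical map $\Phi_A$), reduce $O(2)\backslash SL_\pm(2,\R)$ to $SO(2)\backslash SL(2,\R)$ using the determinant, and invoke the classical orbit--stabilizer identification with $\mathbb{H}$. The paper instead builds the bijection by hand through an explicit invariant: it pulls back the flat metric on $S'$ to $T(S)=\R^2$, records the induced complex structure as a row vector $\begin{pmatrix}z_1&z_2\end{pmatrix}$ up to complex scaling, i.e.\ a point of $\CP\setminus\RP$, and takes the representative $z_1/z_2$ in the upper half-plane. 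Your argument is shorter and more structural, and it makes the homogeneous-space nature of ${\cal{\tilde M}}_I(S)$ transparent; but it only produces \emph{a} bijection (your map sends $g$ to $g^{-1}\cdot i$, which is not the same point of $\mathbb{H}$ as the paper's $\left(\begin{smallmatrix}a&b\\c&d\end{smallmatrix}\right)\mapsto\frac{ai+c}{bi+d}$). The paper's concrete construction is what feeds the rest of the text: the formula for the Veech group acting by right multiplication on row vectors, the identification of $\partial\mathbb{H}$ with projective parallel one-forms (the degenerate row vectors), and the endpoint computation in Lemma \ref{geodesic} all read off directly from its chart. So if you adopted your identification you would have to redo those downstream formulas; as a proof of the stated proposition alone, however, your argument is complete.
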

\begin{proof} We will define an explicit map from isometry classes of marked translation surfaces $[f]$ to $\mathbb{H}$. This gives an explicit map to $\mathbb{D}$ by postcomposing with the conformal map $\phi$ above. 
Let  $f:S\to S'$ represent an element of ${\cal{\tilde M}}_{I}(S)$.  The map $f$ induces a linear isomorphism from $T(S)$ to $T(S')$. All equivalent triples representing the same element of ${\cal{\tilde M}}_{I}(S)$ determine the same metric on $T(S')$. We can pull this metric back to get a metric on $T(S)=\R^2$. A metric together with an orientation determines a complex structure. A complex structure is induced by an $\R$ linear map from $\R^2$ to $\CC$. Two such maps give the same complex structure if they differ by post-composition by multiplication by a non-zero complex number. We can express  this linear map by a matrix $\begin{pmatrix}z_1&z_2\end{pmatrix}$. The matrix $\lambda\begin{pmatrix}z_1&z_2\end{pmatrix}=\begin{pmatrix}\lambda z_1&\lambda z_2\end{pmatrix}$ gives the same complex structure. Thus we can identify the space of complex structures with $\CP$. The space of complex structures corresponds to the subset of row vectors whose entries are linearly independent over $\R$. This is just the complement of the image of $\RP$ in $\CP$ under the natural inclusion.
In order to identify this set with a subset of $\CC$ we choose a chart for $\CP$. There are two standard charts to use based on the fact that $\begin{pmatrix}z_1&z_2\end{pmatrix}$ can be written as $\begin{pmatrix}1&z_2/z_1\end{pmatrix}$ or $\begin{pmatrix}z_1/z_2&1\end{pmatrix}$. Let  $\phi_j:\CC\to\CP$, $j=1,2$ be $\phi_1(z)= \begin{pmatrix} z&1\end{pmatrix}$ and $\phi_2(z)= \begin{pmatrix} 1&z\end{pmatrix}$. We will use $\phi_1$ though $\phi_2$ is often used. 
  Thus the number $z_1/z_2$ is an invariant that determines the complex structure. We can identify the space of complex structures with the subset of projective space $\CP$ consisting of pairs of vectors that are linearly independent over $\R$. (The pairs that are linearly dependent over $\R$ correspond to the real axis.) This invariant takes values in either the upper half-plane or the lower half-plane depending on the orientation induced by the complex structure. Note that this takes the standard complex structure $\begin{pmatrix}1&i\end{pmatrix}$ to the complex number $-i$.
Each metric corresponds to two complex structures, one the standard orientation and one with the opposite orientation.  If we compose the map $Df$ with complex conjugation then we get the second complex structure.  If we write $\chi= z_1/z_2$ then the pair $\{\chi,\bar\chi\}$ determines the metric. We make the convention that we extract from the pair that element that lies in the upper half-plane.
\end{proof}
\noindent 
If we identify a triple $f:S\rightarrow S'$ with a matrix $\left(\begin{smallmatrix} a&b\\c&d\end{smallmatrix}\right)$ by Proposition \ref{affineequiv}, then the corresponding complex structure by Proposition \ref{isometryequiv} is given by the row matrix  $\begin{pmatrix}a+ci&b+di\end{pmatrix}$.
The corresponding element of $\CC$ under the chart $\phi_1$ is  $\frac{ai+c}{bi+ d}$ and the corresponding element of $\mathbb{D}$ is $\frac{(a-d)i+c+ b}{(a+d)i + c-b }$. 
The \emph{right action} of the Veech group on triples described above projects to an action of the Veech group on the space of complex structures which corresponds to \emph{pulling back} a complex structure. (This will be different from the action corresponding to \emph{pushing forward} a complex structure, which is induced by the \emph{left action} of $GL(2,\R)$.) The action of the Veech group on the space of complex structures is the projective action of $GL(2,\R)$ on row vectors coming from multiplication on the right, that is to say
$\begin{pmatrix} z_1 & z_2\end{pmatrix}\mapsto \begin{pmatrix} z_1 & z_2 \end{pmatrix} \left(\begin{smallmatrix} a & b \\ c & d\end{smallmatrix}\right)$. 
When the matrix $\nu = \left( \begin{smallmatrix} a & b \\ c & d\end{smallmatrix}\right)$ has positive determinant it takes the upper and lower half-planes to themselves and the formula is 
$z \mapsto \frac{az+c}{bz+ d}$. 
When the matrix $\nu$ has negative determinant the formula is $z \mapsto \frac{a \overline{z}+c}{b \overline{z}+ d}$.
The Veech group acts via isometries with respect to the hyperbolic metric of constant curvature on $\mathbb{H}$. The action on the unit disk can be obtained by conjugating by the conformal map $\phi: \mathbb{H} \rightarrow \mathbb{D}$.

The hyperbolic plane has a natural \emph{boundary}, which corresponds to  $\partial\mathbb{H} = \{ z \in \mathbb{C} | \, \Im z = 0\}\cup\{\infty\}$ or $\partial \mathbb{D}= \{ z \in \mathbb{C} | \, |z|=1\}$. The boundaries can be naturally identified with space of projective parallel one-forms on $S$. Projective parallel one-forms give examples of projective transverse measures which were use by Thurston to construct his compactification of Teichm\"uller space. A parallel one form gives us a measure transverse to the singular foliation defined by the kernel of the one-form. If two parallel one-forms differ by multiplication by a non-zero real scalar then they give the same projective transverse measure.
A parallel one-form corresponds to a linear map from $\R^2$ to $\R$ which we identify with a row vector $\begin{pmatrix} x_1&x_2\end{pmatrix}$. We can identify the space of projective parallel one forms with the corresponding projective space $\RP$. The linear map represented by $\begin{pmatrix} x_1&x_2\end{pmatrix}$ is sent by standard chart $\phi_1$  to the point $x_1/x_2 \in \mathbb{R} = \partial \mathbb{H}$ and by $\phi \phi_1$ (where the action of $\phi: \mathbb{H}\rightarrow \mathbb{D}$ is extends to the boundaries) to the point $e^{i \theta_x} \in \partial \mathbb{D}$ where $\sin  \theta_x = -2x_1x_2 /(x_1^2+x_2^2)$ and $\cos \theta_x= (x_1^2- x_2^2) /(x_1^2+x_2^2)$. 

The \emph{Teichm\"uller flow} is given be the action  of the $1$-parameter subgroup $g_t$ of $SL(2,\mathbb{R})$ given by the diagonal matrices $$g_t : = \begin{pmatrix} e^{t/2} & 0 \\  0 & e^{-t/2}  \end{pmatrix}$$ on ${\cal{\tilde M}}_{A}(S)$.  This flow acts on translation surfaces by rescaling the time parameter of the vertical flow and rescaling the space parameter for a transversal to the vertical flow thus we can view it as a renormalization operator. 
If we project ${\cal{\tilde M}}_{A}(S)$ to ${\cal{\tilde M}}_{I}(S)$ by sending a triple to its isometry class and  using the identification  ${\cal{\tilde M}}_{I}(S)$ with $ \mathbb{H}$ given in Proposition \ref{affineequiv}, then the Teichm\"uller flow corresponds to the hyperbolic geodesic flow:
\begin{lemma} \label{geodesic} 
Orbits of the $g_t$-action on ${\cal{\tilde M}}_{A}(S)$ project to geodesics in $\mathbb{H}$ parametrized at unit speed. Given
$\nu=\left( \begin{smallmatrix} a&b\\c&d \end{smallmatrix}\right)$, the geodesics through the marked translation surface $[\Phi_\nu]$ converges to the boundary point corresponding to the row vector $\begin{pmatrix} a&b\end{pmatrix}$ in positive time and to the boundary point corresponding to $\begin{pmatrix} c&d\end{pmatrix}$ in backward time.
\end{lemma}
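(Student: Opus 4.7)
The plan is to use Proposition \ref{affineequiv} to identify ${\cal{\tilde M}}_A(S)$ with $SL_\pm(2,\mathbb{R})$, under which the $g_t$-action becomes left multiplication by $g_t$ and $[\Phi_\nu]$ corresponds to the matrix $\nu$. From the explicit formula given after Proposition \ref{isometryequiv}, the projection to ${\cal{\tilde M}}_I(S)\cong \mathbb{H}$ sends $\nu=\left(\begin{smallmatrix}a&b\\c&d\end{smallmatrix}\right)$ to $\Pi(\nu)=\frac{ai+c}{bi+d}$ when $\det\nu=+1$, and to its complex conjugate when $\det\nu=-1$ in accordance with the upper-half-plane convention adopted in the proof of Proposition \ref{isometryequiv}. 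The key reformulation I would exploit is that $\Pi(\nu)=\nu^{T}\cdot i$, where $\nu^{T}=\left(\begin{smallmatrix}a&c\\b&d\end{smallmatrix}\right)$ acts on $\mathbb{H}$ by the M\"obius transformation $w\mapsto \frac{aw+c}{bw+d}$ for $\det\nu=+1$, and by the anti-holomorphic map $w\mapsto \frac{a\bar w+c}{b\bar w+d}$ for $\det\nu=-1$; in either case this is an isometry of $\mathbb{H}$ for the hyperbolic metric of constant curvature.

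Granting this, I would compute $\Pi(g_t\nu)=(g_t\nu)^{T}\cdot i=\nu^{T}g_t^{T}\cdot i=\nu^{T}\cdot(g_t\cdot i)=\nu^{T}\cdot(e^{t}i)$, using the facts that $g_t$ is symmetric and that $g_t\cdot i=e^{t}i$ under the standard M\"obius action. I would then verify directly from the Poincar\'e metric $ds^{2}=(dx^{2}+dy^{2})/y^{2}$ that $t\mapsto e^{t}i$ is the unit-speed parametrization of the geodesic in $\mathbb{H}$ joining $0$ to $\infty$. Since $\nu^{T}$ is an isometry of $\mathbb{H}$, its image $t\mapsto \nu^{T}\cdot(e^{t}i)$ is again a unit-speed geodesic, which proves the first assertion of the lemma.

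For the endpoints, I would pass to the limits on $\partial\mathbb{H}$: as $t\to+\infty$, $e^{t}i\to\infty$, so the forward endpoint is $\nu^{T}\cdot\infty=a/b$; as $t\to-\infty$, $e^{t}i\to 0$, so the backward endpoint is $\nu^{T}\cdot 0=c/d$. Under the chart $\phi_1$ recalled in the discussion, $a/b$ and $c/d$ are precisely the boundary points represented by the row vectors $\begin{pmatrix}a&b\end{pmatrix}$ and $\begin{pmatrix}c&d\end{pmatrix}$, as required. The main bookkeeping obstacle will be the orientation-reversing case $\det\nu=-1$: there $\nu^{T}$ acts anti-holomorphically, but since $0$ and $\infty$ are real they are fixed by complex conjugation, so the endpoint formulas are unchanged, and anti-holomorphic M\"obius maps are still hyperbolic isometries preserving geodesics and their unit-speed parametrization, so the first assertion is valid in both cases.
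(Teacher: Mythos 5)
Your proof is correct, and it takes a somewhat different route from the one in the paper. The paper's argument computes $g_t\nu=\left(\begin{smallmatrix}e^{t/2}a&e^{t/2}b\\e^{-t/2}c&e^{-t/2}d\end{smallmatrix}\right)$, passes to the row vector $\begin{pmatrix}e^{t/2}a+e^{-t/2}ci&e^{t/2}b+e^{-t/2}di\end{pmatrix}$ in $\CP$, and extracts the endpoints by rescaling the projective class (dividing by $e^{t/2}$ as $t\to+\infty$ to get $\begin{pmatrix}a&b\end{pmatrix}$, multiplying by $-ie^{t/2}$ as $t\to-\infty$ to get $\begin{pmatrix}c&d\end{pmatrix}$); it is a bare-hands limit computation that addresses only the endpoint claims. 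You instead observe that the projection of the orbit is $\nu^{T}\cdot(e^{t}i)$, i.e.\ the image of the standard vertical geodesic under the (possibly anti-holomorphic) M\"obius isometry associated to $\nu^{T}$, which yields the geodesic property, the unit-speed parametrization, and the endpoints $\nu^{T}\cdot\infty=a/b$ and $\nu^{T}\cdot 0=c/d$ all at once. The identity $\Pi(g_t\nu)=\nu^{T}g_t^{T}\cdot i=\nu^{T}\cdot(e^{t}i)$ checks out against the paper's explicit formula $\frac{ai+c}{bi+d}$ (with the conjugation convention for $\det\nu=-1$), and your handling of the orientation-reversing case is careful and correct. What your approach buys is precisely the first sentence of the lemma, which the paper's computation leaves implicit; what the paper's approach buys is that it never needs to invoke that the action is by hyperbolic isometries, only that rescaling does not change a point of $\CP$. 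One harmless point to be aware of: ``unit speed'' here is with respect to the curvature $-1$ Poincar\'e metric, exactly the normalization under which your computation gives speed $1$ for $t\mapsto e^{t}i$.
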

\noindent We call a $g_t$-orbit in ${\cal{\tilde M}}_{A}(S)$ (or, under the identifications, in $T_1 \mathbb{D}$) a \emph{Teichm\"uller geodesic}. If   $\Phi_\nu: S \rightarrow \nu \cdot S$ represents the equivalence class $[f]$, the  parametrized \emph{Teichm\"uller geodesic} through $[f]$ is given by $ \{ [\Phi_{g_t \nu }] \}_{t \in \mathbb{R}}$. 

We get a map 
 from the space of marked translation surfaces ${\cal{\tilde M}}_{A}(S)$  to $T_1 \mathbb{H}$ (hence to $T_1 \mathbb{D}$) as follows. Given a triple $[f] \in {\cal{\tilde M}}_{A}(S)$, let  $\nu_f$ be the matrix representing $[f]$ (given by Proposition \ref{affineequiv}) and let $p_f$ be the point in $\mathbb{H}$ representing the isometry class of $[f]$ according to Proposition \ref{isometryequiv}. We send $[f]$ to  $(p_f, v_f) \in T_1 \mathbb{H}$ where $v_f$  is the  the derivative at $p_f$ of the Teichm\"uller geodesic through $[f]$. One gets a map from ${\cal{\tilde M}}_{A}(S)$ to $ T_1 \mathbb{D}$ using the identification of $ T_1\mathbb{H}$ with $T_1 \mathbb{D}$ induced by $u: \mathbb{H}\rightarrow \mathbb{D}$ and its derivative. 

\begin{lemma}\label{4to1} The map from ${\cal{\tilde M}}_{A}(S)$ to  $\rightarrow T_1 \mathbb{H} $ (or to $ T_1  \mathbb{D}$) described above is a surjective $4$ to $1$ map from ${\cal{\tilde M}}_{I}(S)$ to  $T_1 \mathbb{H}$ (respectively $T_1 \mathbb{D}$). 
\end{lemma}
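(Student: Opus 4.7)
\emph{Proof plan for Lemma \ref{4to1}.} Via Proposition \ref{affineequiv} we identify ${\cal{\tilde M}}_A(S)$ with $SL_\pm(2,\mathbb{R})$ and view the map of interest as an explicit map $\Theta : SL_\pm(2,\mathbb{R}) \to T_1\mathbb{H}$, $\nu \mapsto (p_\nu, v_\nu)$. The strategy is to identify $\Theta$ with the quotient of $SL_\pm(2,\mathbb{R})$ by the left action of the Klein four subgroup
\[
G := \{\pm I, \pm J\}, \qquad J := \begin{pmatrix} 1 & 0 \\ 0 & -1 \end{pmatrix}.
\]

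The first step is to verify that $\Theta$ is invariant under left multiplication by $G$. The key feature is that every element of $G$ is diagonal and hence commutes with every element $g_t$ of the Teichm\"uller one-parameter subgroup. Using the formula coming from the proof of Proposition \ref{isometryequiv} --- which, after combining the upper-half-plane extraction with the two cases $\det\nu=\pm1$, simplifies to the unified expression $p_\nu = \frac{ab+cd}{b^2+d^2} + \frac{i}{b^2+d^2}$ --- one checks by inspection that $p_{O\nu} = p_\nu$ for each $O \in G$. For the tangent vector, the Teichm\"uller orbit through $O\nu$ equals $\{O g_t \nu\}_t$, which projects to the same parametrized geodesic in $\mathbb{H}$ as the orbit through $\nu$; hence $v_{O\nu} = v_\nu$. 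Thus $\Theta$ descends to a well-defined map $\bar\Theta : SL_\pm(2,\mathbb{R})/G \to T_1\mathbb{H}$.

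The second step is to prove $\bar\Theta$ is a bijection by first analyzing the restriction of $\Theta$ to $SL(2,\mathbb{R})$ and then using the involution $L_J$ to extend to the orientation-reversing component. For $\nu \in SL(2,\mathbb{R})$, a short calculation shows that $p_\nu$ coincides with $\nu^T \cdot i$ under the M\"obius action, and by Lemma \ref{geodesic} the Teichm\"uller geodesic through $\nu$ projects to the hyperbolic geodesic through $\nu^T \cdot i$ in the direction obtained by pushing forward the upward unit tangent at $i$ via $\nu^T$. Since $PSL(2,\mathbb{R})$ acts simply transitively on $T_1\mathbb{H}$ and transposition is a bijection of $SL(2,\mathbb{R})$, the restriction $\Theta|_{SL(2,\mathbb{R})}$ is surjective onto $T_1\mathbb{H}$ with every fiber of the form $\{\nu_0, -\nu_0\}$. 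The $G$-invariance established above shows that $L_J$ is a bijection between this fiber and $\{J\nu_0, -J\nu_0\} \subset SL^-(2,\mathbb{R})$; conversely any $\eta \in SL^-(2,\mathbb{R})$ in the fiber has $J\eta \in SL(2,\mathbb{R}) \cap \Theta^{-1}(p,v) = \{\pm\nu_0\}$, so $\eta \in \{\pm J \nu_0\}$. Hence $\Theta^{-1}(p,v) = \{\pm \nu_0, \pm J\nu_0\}$ has cardinality exactly four for every $(p,v) \in T_1 \mathbb{H}$, establishing both surjectivity and the $4$-to-$1$ property.

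The main obstacle I anticipate is the bookkeeping around the convention for orientation-reversing matrices in the definition of $p_\nu$ (the upper-half-plane extraction carried out in the proof of Proposition \ref{isometryequiv}); once one collapses the piecewise formula into the unified expression above, the $L_J$-invariance and the $2+2$ fiber count fall out immediately.
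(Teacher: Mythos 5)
Your proof is correct, and it rests on the same core observation as the paper's own argument: the fiber over a point of $T_1\mathbb{H}$ is the left coset $\{\pm I,\pm J\}\,\nu$, which is precisely the paper's set of four markings obtained by switching the signs of the rows $\ell_1=(a\;\,b)$ and $\ell_2=(c\;\,d)$ (switching $\ell_2$ is left multiplication by $J$, switching $\ell_1$ by $-J$, switching both by $-I$). Where you genuinely diverge is in how the statement gets nailed down. The paper's proof only goes one way: it exhibits the four sign-flipped triples, notes that they give geodesics with the same forward and backward endpoints on the boundary and hence the same tangent vector at the same point, and identifies them geometrically as the original surface, the surface with $v_1$ reversed, the one with $v_2$ reversed, and the $180^{\circ}$ rotation; surjectivity and the fact that the fiber contains \emph{no more} than these four points are left implicit. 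Your argument supplies both missing halves: the identity $p_\nu=\nu^{T}\cdot i$ (valid because the unified formula $p_\nu=\frac{ab+cd}{b^{2}+d^{2}}+\frac{i}{b^{2}+d^{2}}$ does hold for either sign of the determinant once the upper-half-plane representative is extracted --- I checked this) combined with the simply transitive action of $PSL(2,\mathbb{R})$ on $T_1\mathbb{H}$ shows that the restriction to $SL(2,\mathbb{R})$ is already surjective with fibers exactly $\{\pm\nu_0\}$, and the $J$-translation argument then pins the full fiber down to exactly $\{\pm\nu_0,\pm J\nu_0\}$, four distinct matrices since the two pairs have opposite determinants. The only content of the paper's proof that your version omits is the geometric reading of the three nontrivial deck transformations as reversals of $v_1$, of $v_2$, and of both, which is what the paper invokes when it later describes ${\cal M}_{A}(S)$ as a four-fold cover of the tangent bundle of the Teichm\"uller orbifold; it would be worth one extra sentence recovering that interpretation from your matrices $-J$, $J$, $-I$.
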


\begin{proof} 
Given a triple $f: S \rightarrow S'$, let $ e_1, e_2$ be the canonical base of $\mathbb{R}^2 \approx T_1 S'$, and let $v_1, v_2 \in \mathbb{R}^2\approx T_1 S$  be the pull-back under $f$ of $e_1, e_2$ respectively.   Let $[f]= [\Phi_\nu]$ where $\nu =\left( \begin{smallmatrix}
a&b\\ c&d \end{smallmatrix}\right)$ and let $\ell_1=\begin{pmatrix} a&b\end{pmatrix}$  and $\ell_2=\begin{pmatrix}c&d\end{pmatrix}$ be forward and backward endpoints of the geodesics through $[f]$ (recall Lemma \ref{geodesic}). 
If we had replace either $\ell_1$ or $\ell_2$ by its negatives then we would get (by the correspondence in Proposition \ref{affineequiv}) a marked translation surface  for which the corresponding point of $\mathbb{D}$ or $\mathbb{H}$ flows to the same points on the boundary under the forward/backward geodesic flow and hence another marked translation surface which is represented by the same tangent vector at that point. By switching the signs of $\ell_1$ and $\ell_2$ we get four such marked translation surfaces, which correspond to the original surface, the surface obtained by reversing the direction $v_2$ (which is the direction maximally contracted along the geodesic flow), the one obtained reversing the direction $v_1$ (which is normal to the direction of maximal contraction of the geodesic flow) and the surface obtained by rotating the plane $v_1, v_2$ by $180^\circ$.
\end{proof}

\subsection{The Teichm\"uller curve or orbifold of a translation surface.}\label{Teichorbsec}
The quotient of ${\cal{\tilde M}}_{I}(S)$ by the natural right action of the Veech group $V(S)$ is the moduli space of unmarked translation surfaces which we call ${\cal{M}}_{I}(S) = {\cal{\tilde M}}_{I}(S) / V(S)$. This space is usually called the Teichm\"uller curve associated to $S$. In our case, since we allow orientation reversing automorphisms this quotient might be a surface with boundary so the term Teichm\"uller curve does not seem appropriate. Instead we call it the Teichm\"uller orbifold associated to $S$ (see Thurston's notes \cite{Th:thr} for a discussion of orbifolds). We denote by ${\cal{ M}}_{A}(S)$ the quotient ${\cal{\tilde M}}_{A}(S)/ V(S)$ of ${\cal{\tilde M}}_{A}(S)$ by the right action of the Veech group. This space is a four-fold cover of the tangent bundle to ${\cal{M}}_{I}(S)$ in the sense of orbifolds (see Lemma \ref{4to1}). The Teichm\"uller flow on ${\cal{ M}}_{A}(S)$ can be identified with the geodesic flow on the Teichm\"uller orbifold. We note that in the particular case where the space ${\cal{M}}_{I}(S)$ is a polygon in the hyperbolic plane the geodesic flow in the sense of orbifolds is just the \emph{hyperbolic billiard flow} on the polygon which is to say that if we project an orbit of this flow to the polygon then it gives a path which is a hyperbolic geodesic path except where it hits the boundary and when it does hit the boundary it bounces so that the angle of incidence is equal to the angle of reflection.

\subsection{The Iso-Delaunay tessellation of the Teichm\"uller disk.}\label{isoDelaunaysec}

Let $f:S\to S'$ be a triple. A Delaunay triangulation of $S'$ can be pulled back by $f$ to give an affine triangulation of $S$.
If we fix an affine triangulation $T$ of $S$ we can consider the collection of triples $f:S\to S'$ for which $T$ is the pullback of a Delaunay triangulation of $S'$. This is a closed  subset (possibly empty) of the space of triples ${\cal{\tilde M}}_{A}(S)$. Since the property of being a Delaunay triangulation depends only on the isometry class of a surface (or in other words it is invariant under the left action of $O(2)$ on ${\cal{\tilde M}}_{A}(S)$) this set is a subset of  ${\cal{\tilde M}}_{I}(S)$ and hence, by the identification in Proposition \ref{isometryequiv}, of the hyperbolic plane $\mathbb{D}$ (or $\mathbb{H}$).
This subset of $\mathbb{D}$ is convex and bounded by a finite number of geodesic segments and is called \emph{iso-Delaunay tile}. The collection of all such sets gives a tiling of the Teichm\"uller disk $\mathbb{D}$ which is called the \emph{Iso-Delaunay tessellation} (following Veech \cite{Ve:del}, see the exposition by Bowman \cite{Bo:vee}). An example of an \emph{Iso-Delaunay tessellation} is shown in Figure \ref{tessellation}, see Proposition \ref{isoDelaunayoctagon}. When one crosses transversally the boundary of  two adjacent iso-Delaunay regions, the Delaunay triangulations change abruptly and one can show that they change by a certain number of Delaunay switches (see Figures \ref{triangulations}, \ref{triangulationsL} and \S \ref{octagondisk} for some concrete examples). While the interior of the tiles correspond to  triples  $f:S\to S'$ for which $S'$ admits a unique Delaunay triangulation, points on the boundary of the iso-Delaunay tessellation correspond exactly to triples  $f:S\to S'$ for which $S'$ admits more than one Delaunay triangulation. 

\section{Renormalization schemes on the Teichm\"uller disk.}\label{TeichdiskrenormCF}
\subsection{The Teichm\"uller disk of the octagon} \label{octagondisk}
Let $S_O$ be the translation surface obtained by identifying opposite sides of the octagon $O$.  Let us first describe the group $\Aff(S_O)$ as well as $V(S_O)$. We refer to \cite{SU:sym} for further details. 

\paragraph{The Veech group  and the affine automorphism group of $S_O$.}
Since we allow orientation reversing transformations, the entire isometry group $\Die_8$ of the octagon $O$ is contained in $\Aff(S_O)$. For $\eta\in D_8$ let $\Psi_\eta:S_O\to S_O$ denote the corresponding affine automorphism. We have $D\Psi_\eta=\eta$. Consider in particular  the reflection $\Psi_\alpha $ of the octagon at the horizontal axes and  the reflection $\Psi_\beta $ in the tilted line which forms an angle $\pi/8$ with the horizontal axis. These are given by the two matrices
\be \label{alphabetadef}
\alpha := \begin{pmatrix} 1 & 0 \\  0 & -1  \end{pmatrix} \qquad  \beta := \begin{pmatrix} \frac{\sqrt{2}}{2} & \frac{\sqrt{2}}{2} \\  \frac{\sqrt{2}}{2} & - \frac{\sqrt{2}}{2}  \end{pmatrix}   = \rho_{\frac{\pi}{8}}  \cdot \alpha \cdot \rho_{-\frac{\pi}{8}}
 \ee 
  where $\rho_\theta = \left(\begin{smallmatrix} \cos \theta & - \sin \theta \\  \sin \theta & \cos\theta \end{smallmatrix} \right)$  is the matrix representing counterclockwise rotation by the angle $\theta$. 

Another element of  $\Aff(S_O)$ was described by Veech. Consider the shear:
\be \label{sigmadef}
\sigma  = \begin{pmatrix} 1 &  2(1+ \sqrt{2}) \\ 0 &  1  \end{pmatrix}. \ee 
\begin{figure}
\centering
\includegraphics[width=.9\textwidth]{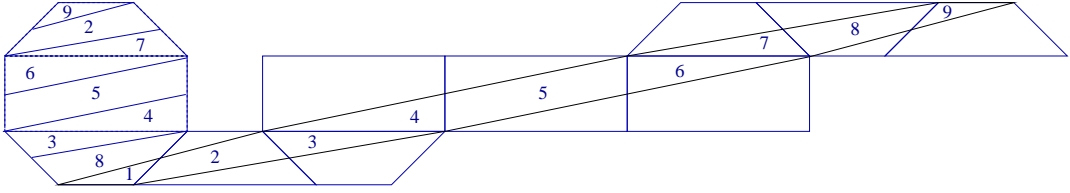}
\caption{The affine octagon $O'$ and the cut and paste map $\Upsilon_o$.\label{shearedoctagon1}}
\end{figure}

\noindent The image $O':= \sigma \octcdot O \subset {\mathbb{R}^2} $ is the affine octagon shown in Figure \ref{shearedoctagon1}; $O'$ can be mapped to the original octagon $O$ by cutting it into polygonal pieces, as indicated in Figure \ref{shearedoctagon1}, and rearranging these pieces without rotating them to form $O$ (in Figure \ref{shearedoctagon1} the pieces of $O$ and the pieces of $\sigma \octcdot O $ have been numbered to show this correspondence). Let us denote  by $\Upsilon_o : O' \rightarrow O$ this \emph{cut and paste} map. 
Clearly $D \Upsilon_o = Id$. The automorphism  $\Psi_{\sigma}$ is given by the composition $\Upsilon_o  \sigma$. 


In our renormalization scheme for trajectories (see \S \ref{trajrenormsec}) we use an orientation reversing element whose linear part is given by the following matrix:
\be \label{gammadef} 
\gamma :=  \begin{pmatrix} -1 &  2(1+\sqrt{2})  \\ 0  & 1
 \end{pmatrix} . 
\ee
Let us remark that $\gamma$ is an \emph{involution}, i.e. $\gamma^2=id$ or $\gamma=\gamma^{-1}$. One can  check that  $ \gamma \, \nu_7 = \sigma$, where $\nu_7$ is the reflection at the vertical axes, see (\ref{nujdef}). Thus,
 the action of $\gamma$ on $O$ is obtained by first reflecting it with respect to the vertical axis (this sends $O$ to $O$, but reverses the orientation), then shearing it through $\sigma$.  The image  $O':= \gamma \octcdot O$ is the same as in Figure \ref{shearedoctagon1}, but the orientation is reversed. 
  Since $\gamma$ is an involution, $\gamma \octcdot O'=O$. 
 If we compose  $\gamma : O \rightarrow O'$ with the cut and paste map  $\Upsilon_{o}: O' \rightarrow O$ (defined above) which rearranges  the pieces of the skewed octagon in Figure \ref{shearedoctagon1} to form the regular octagon,  we get an affine automorphism $\Psi_{\gamma}:= \Upsilon_{o} \gamma  $ of $S_O$.  
 An alternative description of $\Psi_{\gamma}$ is given in \cite{SU:sym}. The element $\Psi_{\gamma}$ plays a key role in the renormalization scheme for trajectories  in \S\ref{trajrenormsec}, since if the trajectory    $\tau$ has direction $\theta \in \Sigma_0$, the derived trajectory $\tau'$ such that $c(\tau)'=c(\tau')$ is given by
\be \label{derivedtraj}
\tau' = \Phi_\gamma \tau, \qquad  (\mathrm{when} \ s(\tau) = 0)
\ee
where $\Phi_\gamma \tau$ is a trajectory on $S_O$ obtained by post-composing the trajectory $\tau$ with  $\Phi_\gamma$. 
 
\begin{lemma} \label{Veechprop}The group $\Aff(S_O)$ is generated by the  affine diffeomorphisms $\Psi_\alpha$, $\Psi_\beta$ and $\Psi_\gamma$.  
 The Veech group $V(S_O )$ is generated by the corresponding linear maps  $\alpha$, $\beta$ and $\gamma$. The  kernel of the homomorphism from $\Aff(S_O)$ to $GL(2,\R)$ is trivial. 
\end{lemma}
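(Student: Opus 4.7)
The three assertions are intertwined. The plan is first to reduce the generation of $\Aff(S_O)$ to that of $V(S_O)$ by establishing triviality of the kernel: by definition the Veech homomorphism $D\colon \Aff(S_O) \to V(S_O)$ is surjective onto $V(S_O)$, so once $\ker D = \{\mathrm{Id}\}$ is known, $D$ is a group isomorphism and any generating set of $V(S_O)$ lifts uniquely to a generating set of $\Aff(S_O)$. Thus the first statement will follow once the second and third are proven.

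For the third statement (triviality of the kernel), I will use the fact that $S_O$ has a \emph{unique} singular point $p_0$, namely the common image of all vertices of $O$ under $\sim$, since identification of opposite sides cyclically identifies the eight vertices. Every $\Psi \in \Aff(S_O)$ preserves $\Sigma = \{p_0\}$ setwise and hence fixes $p_0$. If $D\Psi = \mathrm{Id}$ then, reading $\Psi$ in translation charts, $\Psi$ acts locally as a translation; since the chart transition functions are themselves translations, the local translation vector is a globally constant $v \in \R^2$ on the connected open set $S_O \setminus \{p_0\}$. Fixing $p_0$ then forces $v = 0$, so $\Psi = \mathrm{Id}$.

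For the second statement, the inclusion $\langle \alpha, \beta, \gamma\rangle \subseteq V(S_O)$ is immediate, since these matrices are precisely the derivatives of the affine automorphisms $\Psi_\alpha$, $\Psi_\beta$, $\Psi_\gamma$ constructed earlier (with $\Psi_\gamma = \Upsilon_o \circ \Phi_\gamma$ coming from the cut-and-paste of Figure~\ref{shearedoctagon1}). For the reverse inclusion I will invoke Veech's classical computation \cite{Ve:tei} that $V^+(S_O)$ is the Fuchsian triangle group of signature $(2,8,\infty)$, generated by the octagonal rotation $\rho_{\pi/4}$ and the parabolic shear $\sigma$. Since $\alpha \in V(S_O)$ is orientation-reversing, we have $V(S_O) = V^+(S_O) \sqcup \alpha V^+(S_O) = \langle \rho_{\pi/4}, \sigma, \alpha\rangle$, so it suffices to exhibit each of $\rho_{\pi/4}$, $\sigma$ and $\alpha$ inside $\langle \alpha, \beta, \gamma\rangle$. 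A short matrix computation (using $\beta = \rho_{\pi/8}\alpha\rho_{-\pi/8}$) shows $\beta\alpha = \rho_{\pi/4}$, whence $-I = (\beta\alpha)^4 \in \langle \alpha, \beta\rangle$; and from $\gamma = \sigma\nu_7$ together with $\nu_7 = -\alpha$ one gets $\gamma\alpha = -\sigma$, so $\sigma = (\beta\alpha)^4 \gamma \alpha \in \langle \alpha, \beta, \gamma\rangle$.

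The main non-trivial ingredient is Veech's identification of $V^+(S_O)$, which I take as a black box from \cite{Ve:tei}; the remainder of the proof consists of elementary matrix arithmetic and the single-singularity argument for the kernel.
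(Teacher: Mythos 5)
You should first note that the paper does not actually supply a proof of this lemma: it defers the identification of $V^+(S_O)$ to Veech \cite{Ve:tei} and Earle--Gardiner \cite{CG:tei}, and the full statement (including the orientation-reversing part and the kernel) to \cite{SU:sym}. Your treatment of the generation statement follows essentially the same route and the matrix arithmetic is correct: $\beta\alpha=\rho_{\pi/4}$, hence $(\beta\alpha)^4=-I$, and $\gamma\alpha=\sigma\nu_7\alpha=-\sigma$, so $\sigma=(\beta\alpha)^4\gamma\alpha$ and $\langle\rho_{\pi/4},\sigma,\alpha\rangle\subseteq\langle\alpha,\beta,\gamma\rangle$. (A small inaccuracy: the paper records the signature as the extended $(4,\infty,\infty)$ triangle group, not $(2,8,\infty)$; but since you only use that $V^+(S_O)$ is generated by $\rho_{\pi/4}$ and $\sigma$, which is the correct content of the citation, this does not affect the argument.) The reduction of the first assertion to the other two is also fine.

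The genuine gap is in your proof that the kernel is trivial. The step ``the local translation vector is a globally constant $v\in\R^2$, and fixing $p_0$ forces $v=0$, so $\Psi=\mathrm{Id}$'' cannot be correct, because no argument using only the hypothesis ``$S_O$ has a single singular point'' can establish triviality of the translation group. A counterexample in the same stratum: glue three unit squares $S_1,S_2,S_3$ by attaching the right edge of $S_i$ to the left edge of $S_{i+1}$ and the top of $S_i$ to the bottom of $S_{i+1}$ (indices mod $3$). This is a genus-two translation surface with a single cone point of angle $6\pi$, and the cyclic permutation of the squares is a nontrivial translation equivalence fixing that cone point; in the polygon charts its displacement is identically zero, yet the map is not the identity. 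The underlying problems are that the displacement is only well defined up to the translational ambiguity of the charts (on the universal cover it is a constant only modulo the period group of the surface, which is dense in $\R^2$), and that even a map with zero displacement can act as a nontrivial deck transformation of the $3$-fold cone at $p_0$ (rotation by $2\pi$ inside the $6\pi$ cone). So ``$v=0$ hence $\Psi=\mathrm{Id}$'' does not follow. Triviality of the kernel for $S_O$ is an octagon-specific fact: one must rule out such a $\mathbb{Z}/3$ symmetry, for instance by noting that a nontrivial translation automorphism acts freely on $S_O\setminus\{p_0\}$ and must preserve each horizontal cylinder (the two cylinders of the regular octagon have distinct circumferences) and then checking on the boundary saddle connections that only the identity is possible, as is done in \cite{SU:sym}. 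Until this is repaired, your lift of the generators from $V(S_O)$ to $\Aff(S_O)$ is also unjustified.
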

\noindent The analogous Lemma for the group $V^+$ of double cover of the octagon  is due to Veech \cite{Ve:tei} and  for  $V^+(S_O)$ it is proved by Earle and Gardiner \cite{CG:tei}. The proof for $V(S_O )$ is included in \cite{SU:sym}.

\paragraph{The fundamental domain on the Teichm\"uller disk. }
Consider the identification of the set  ${\cal{\tilde M}}_A(S_O)$ of marked translation surfaces affinely equivalent to $S_O$  with $T_1\Disk$ described in section \S\ref{Teichdisksec}. 
Each unit tangent vector in $\DD$ corresponds to a matrix $\nu \in PSL(2,\mathbb{R})$ and to the marked affine deformation $\nu\cdot S_O$, given by the triple $\Phi_{\nu}: S_O \rightarrow \nu \cdot S_{ O}$. A point of $\Disk$ can be thought as an isometry class of a triple $f: S_O \rightarrow S'$ or equivalently as a \emph{metric} on $S_O$, the metric obtained pulling back the flat metric on $S'$ by $f$. Let  $\underline{0}$ be the center of the disk $\Disk$, which represents the canonical basepoint which is the isometry class of  the triple $id:S_O\to S_O$ and the standard flat metric on $S_O$.  

The Veech group $V_P(S_O)$ acts on $\Disk$ on the right as described in \S\ref{Teichdisksec}. Given any subset $\mathscr{D}\subset \Disk$, we will use the notation $\mathscr{D} \nu$ for the image of $\mathscr{D}$ under the right action of $\nu \in V_P(S_O)$. A fundamental domain for the action of $V_P(S_O)$ is given by the hyperbolic triangle $\mathscr{F}$ shown in light color in Figure \ref{tessellation}, with a vertex and an angle of $\pi/4$  at the center $\underline{0}$ of $\Disk$, one horizontal side and the other two vertexes on the boundary $ \partial \Disk$. 
The affine reflections $\alpha$, $\beta$ and $\gamma$ defined by (\ref{alphabetadef}) and (\ref{gammadef}) act (on the right) on $\Disk$ as  hyperbolic reflections through the sides of the triangle  $\mathscr{F}$, $\alpha$ being the reflection at the horizontal side and $\gamma$ being the reflection at the side  connecting the two points at infinity. We call this latter side $E_0$. 
Hence, $V_P(S_O)$ is the \emph{group of reflections at sides of the hyperbolic triangle $\mathscr{F}$}, or in other words the extended triangle group with signature $(4,\infty,\infty)$\footnote{Some authors use the term triangle group for groups of orientation preserving hyperbolic isometries and use the term extended triangle group for  the full group, see for example  \cite{SKa:Fuc}. This is a subgroup of index two in the extended triangle group.}. As we saw in \S\ref{discsec}, the quotient  $\Disk / V_P(S_O) $ is isomorphic to  the Teichm\"uller orbifold ${\cal{M}}_{I}(S_O)$ (see \S\ref{Teichorbsec}). 

The images of the fundamental domain $\mathscr{F}$ under $V_P(S_O)$ give  a  tiling of $\Disk$. The eight images of $\mathscr{F}$ by the action of the group elements $\nu_0, \nu_1,\dots, \nu_7$ (see (\ref{nujdef}) for their definition) have the  center $\underline{0}$ of $\Disk$ as a common vertex and form a hyperbolic octagon with vertexes on $\partial \Disk$ (see Figure \ref{tessellation}), which we denote by $\mathscr{O}$. 
Let us define\footnote{Note the difference with the relation $\Sigma_i = \nu_i^{-1}\Sigma_0$ that we obtain considering the linear action of $\nu_i$ on  $\mathbb{R}^2$.} 
\be\label{sidesrightaction}
 E_i : = E_0 \nu_i, 
\ee
so that $E_0, \dots, E_7$ the sides of $\mathscr{O}$, where $E_0$ is the side in common with $\mathscr{F}$, and the remaining sides are obtained by moving clockwise around the octagon. Each $E_i$ is a hyperbolic geodesic connecting two ideal vertexes. 
 The images of $\mathscr{O}$ under $V_P(S_O)$ 
  give a tessellation of $\Disk$, which we call \emph{ideal octagon tessellation}. The ideal octagon tessellation is obtained from the tessellation in Figure \ref{tessellation}  by collecting into a single tile all sets of hyperbolic triangles which share a common vertex in $\Disk$.  This is the tessellation shown in gray in Figure \ref{dualtree}.

\begin{figure}
\centering
\includegraphics[width=.6\textwidth]{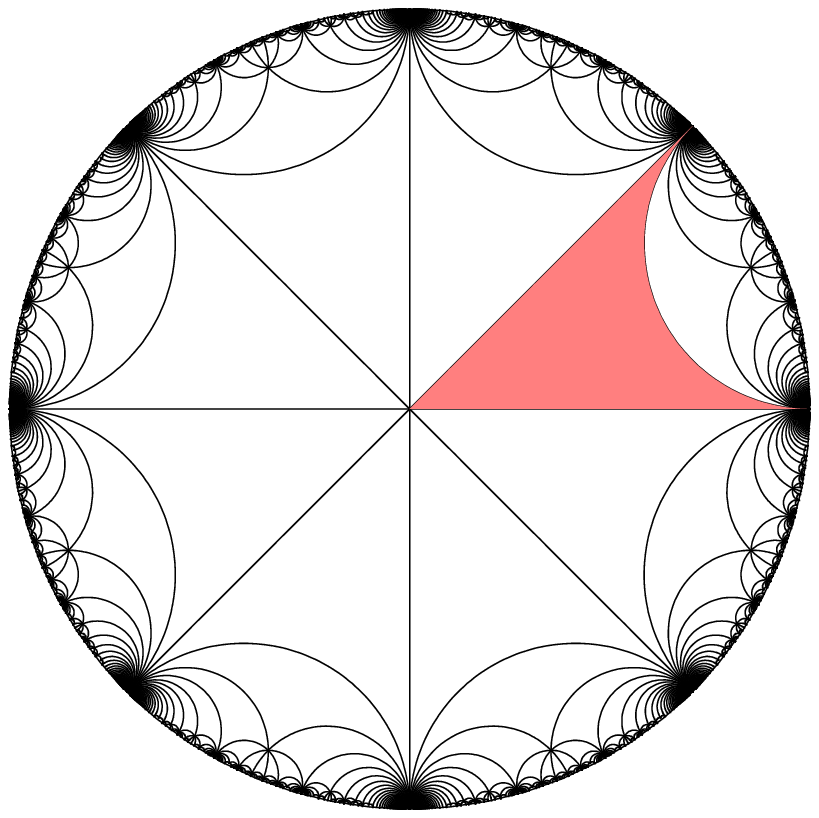}
\caption{The hyperbolic tessellation of $\Disk$ by copies of $\mathscr{F}$.\label{tessellation}}
\end{figure}  

\paragraph{The Iso-Delaunay tessellation of the Teichm\"uller disk of $S_O$.}
Let us describe the iso-Delaunay tessellation of the Teichm\"uller disk of $S_O$. 

 \begin{prop}\label{isoDelaunayoctagon}
The iso-Delaunay tessellation of the Teichm\"uller disk of $S_O$ is the tessellation by images of the triangle $\mathscr{F}$  by the right action of the elements of $V_P(S_O)$. 
\end{prop}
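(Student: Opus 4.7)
The plan is to prove the equality of the two tessellations by establishing (a) the iso-Delaunay tessellation is invariant under the right action of $V_P(S_O)$ on $\Disk$, and (b) the single fundamental triangle $\mathscr{F}$ coincides with one iso-Delaunay tile. Together these force the two tessellations to agree, since $\mathscr{F}$ is a fundamental domain for $V_P(S_O)$.

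For (a), I would unpack the definition of the right Veech action. Given an affine automorphism $\Psi \in \Aff(S_O)$ with $D\Psi = \nu \in V_P(S_O)$, the action on triples is $[f] \mapsto [f\circ \Psi]$, where $f: S_O \to S'$. The target surface $S'$ is unchanged, so its Delaunay triangulation is the same; only the pullback triangulation on $S_O$ changes, being replaced by $\Psi^* T$. Hence the property ``the pullback of the Delaunay triangulation of $S'$ is of combinatorial type $T$'' is $V_P(S_O)$-equivariant, and iso-Delaunay tiles are permuted by the right $V_P(S_O)$-action.

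For (b), I would pick an interior point $p$ of $\mathscr{F}$, represented by a matrix $\nu_p$, and examine the Delaunay triangulation of the affinely deformed octagon $\nu_p\cdot S_O$. The task is then to verify two things: (i) throughout the interior of $\mathscr{F}$, the combinatorial type of this Delaunay triangulation is constant and strictly Delaunay (all dihedral angles strictly less than $\pi$); and (ii) across each of the three boundary sides of $\mathscr{F}$ a Delaunay switch is forced, so the combinatorial type changes. For the two sides of $\mathscr{F}$ meeting at $\underline 0$, which lie along the fixed axes of the orientation-reversing isometries $\alpha$ and $\beta$, surfaces along these sides have an extra Euclidean symmetry, so the Delaunay triangulation acquires a symmetry that produces a pair of inscribed quadrilaterals with $d(e)=\pi$; this is precisely the degeneracy at an iso-Delaunay edge. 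For the geodesic $E_0$, which is fixed by the affine involution $\gamma$, the surface on $E_0$ satisfies $\gamma\cdot S' \simeq S'$, so the Delaunay triangulation must be $\gamma$-invariant, and pairs of triangles swapped by $\gamma$ sit on a common circumscribing circle — again giving $d(e)=\pi$. Crossing each side and applying the corresponding generator to the candidate triangulation produces a different combinatorial triangulation, so a switch genuinely occurs.

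Combining (a) and (b), the $V_P(S_O)$-orbit of the iso-Delaunay tile equal to $\mathscr{F}$ is an entire family of iso-Delaunay tiles that cover $\Disk$ by the fundamental-domain property; since iso-Delaunay tiles have disjoint interiors and their union is $\Disk$, the two tessellations must coincide. The main obstacle is the explicit combinatorial verification in (b): one must produce a candidate triangulation of the standard octagon, compute dihedral angles on the sheared octagons parameterized by $\mathscr{F}$, and check numerically (or by using Veech's characterization in terms of circumscribed disks) that each dihedral angle degenerates exactly on the appropriate side of $\mathscr{F}$. This is a finite, explicit check that can be done using the generators $\alpha,\beta,\gamma$ described in \S\ref{octagondisk}; once done, invariance under $V_P(S_O)$ propagates the local picture globally.
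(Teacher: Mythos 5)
First, a point of reference: the paper does not actually prove this proposition --- it explicitly defers the proof to Veech and Bowman --- so your proposal is competing with a citation, not with an in-paper argument. Your overall architecture is the right one and is essentially what those references do: (a) the right $V_P(S_O)$-action permutes iso-Delaunay tiles, and (b) one verifies that a single tile equals $\mathscr{F}$, after which the fundamental-domain property forces the two tessellations to coincide. Your argument for (a) is correct: the right action replaces $f$ by $f\Psi$, leaves the target surface and hence its Delaunay triangulation untouched, and changes only the pullback, so tiles are permuted. The material the paper does include in \S\ref{octagondisk} (the triangulations for $\theta\in\Sigma_0$ and $\theta\in\Sigma_1$, the degenerate picture at $\theta=\pi/8$, and the right-angled $L$-shape decomposition on $E_0$) is exactly the explicit data that your step (b) calls for.

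The genuine gap is in step (b), and it is not only that the ``finite, explicit check'' is deferred: the symmetry heuristics you offer in its place do not work as stated. An orientation-reversing isometric involution of $S'$ does \emph{not} force the Delaunay triangulation to be non-unique: an invariant triangulation can perfectly well be strictly Delaunay (a generic rhombic torus has a reflection symmetry and a unique, strictly Delaunay triangulation), and two triangles swapped by a reflection across their common edge are \emph{not} automatically inscribed in one circle --- the four points $(0,0),(1,0),(a,b),(a,-b)$ are concyclic only when $(a-\tfrac12)^2+b^2=\tfrac14$. So ``$S'$ on $E_0$ is $\gamma$-symmetric, hence $d(e)=\pi$'' is a non sequitur. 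What actually produces the degeneracy on $E_0$ is the computation the paper sketches: surfaces there are translation equivalent to a right-angled $L$ built from rectangles, and rectangles are cyclic quadrilaterals, so the rectangle diagonals have $d(e)=\pi$; an analogous explicit cyclic quadrilateral of singular points must be exhibited on the two sides of $\mathscr{F}$ through $\underline{0}$ (this is what the inscribed symmetric quadrilaterals at $\theta=\pi/8$ in the paper's figures provide). You also still owe the complementary half of (b): that no dihedral angle reaches $\pi$ anywhere in the \emph{open} triangle $\mathscr{F}$, i.e.\ that the candidate triangulation is strictly Delaunay on the whole interior and not just at one sample point; without ruling out interior walls, the tile containing your chosen $p$ could be a proper convex subset of $\mathscr{F}$. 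Once these computations are supplied (or imported from Veech or Bowman), your assembly of (a) and (b) is correct.
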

\noindent  For the proof of Proposition \ref{isoDelaunayoctagon}, we refer the reader to Veech \cite{Ve:bic} or Bowman \cite{Bo:vee}. We describe here  some examples of Delaunay triangulations corresponding to different tiles. 
Since any point in the Teichm\"uller disk corresponds to a triple $f: S_O \rightarrow S'$  (see \S\ref{Teichdisksec}) or equivalently to an affine deformation $S'$ marked by $S_O$, we will describe at the same time the triangulations on $O$ which correspond to the pull-back via the marking $f$ of the Delaunay triangulations on $S$. 

The surface $S_O$  is an example of a surface for which the Delaunay triangulation is not unique: all the triangulations of  the octagon $O \subset \mathbb{R}^2$ by Euclidean triangles are Delaunay triangulations (for example the triangulations in Figures \ref{triangles1} and \ref{triangles2}). Let $g_t^{\theta} $ be the subgroup conjugate to the Teichm\"uller geodesic flow which acts by contracting the direction $\theta$ when $t>0$ which is given by $g_t^{\theta} : = \rho_{\pi/2-\theta}^{-1} \, g_t \, \rho_{\pi/2 -\theta}$ (recall that $\rho_\theta$ is the counterclockwise rotation by $\theta$). If we consider a deformation  $ g_t ^{\theta} \cdot S_O$ where $t>0$ is small and  $\theta \neq k \pi/8, k\in \mathbb{N}$, there is a unique Delaunay triangulation.
The Delaunay triangulations and their pull-backs corresponding to triples in the interior of $\mathscr{F}$ and in $ \mathscr{F}\nu_1  $  (for which $S'= g_t ^{\theta} \cdot S_O$ is a small deformation with $\theta \in \Sigma_0$  and $\theta \in \Sigma_1$ respectively) are shown  in Figure \ref{triangles1}, \ref{triangles1b} and \ref{triangles2}, \ref{triangles2} respectively. Figures \ref{triangles1b}, \ref{triangles2b} show the actual Delaunay triangulations on $S'= g_t ^{\theta} \cdot S_O$, while  Figures \ref{triangles1}, \ref{triangles2} show their pull-backs to affine triangulations of $S_O$. Let us remark that the triangulations in Figure \ref{triangles1} and  \ref{triangles2} 
 differ by  simultaneous Delaunay switches involving the three the edges which are not sides of the octagon $O$. Similarly, pull-backs of triangulations associated with triples in $ \mathscr{F}\nu_i$ are images of the one in  Figure \ref{triangles1} by the standard linear  action of $\nu_i^{-1}$, $i=1, \dots, 7$ on $\mathbb{R}^2$. 
\begin{figure}[ht]
 \begin{center}
\subfigure[ $S$  for $\theta \in \Sigma_0$  ]{\includegraphics[width=0.15\textwidth]{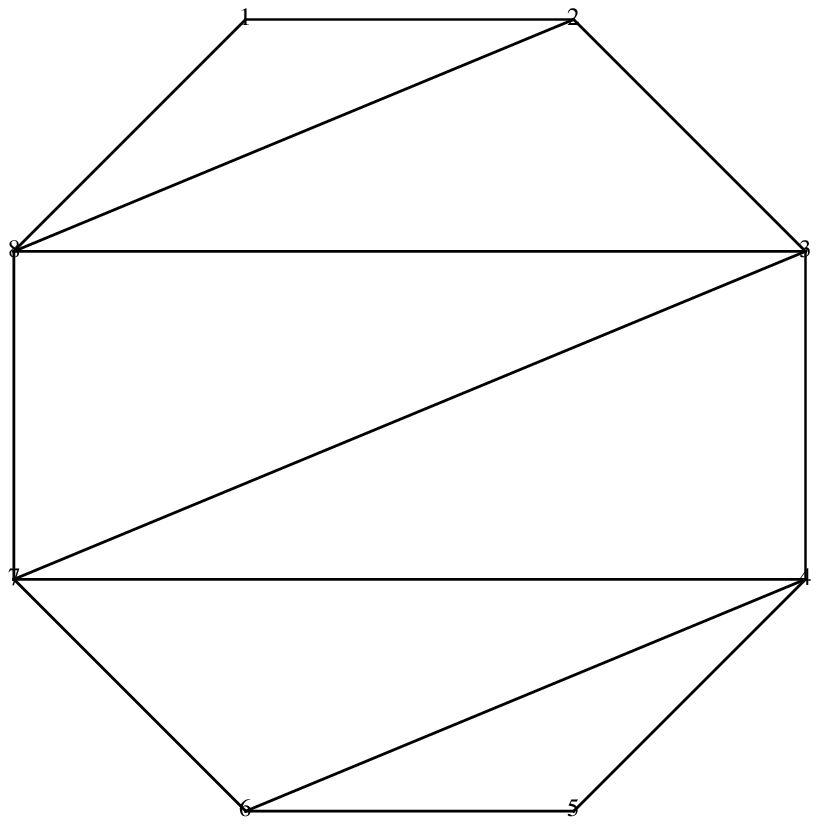}\label{triangles1}}
\hspace{2mm}
\subfigure[ $S'$  for $\theta \in \Sigma_0$ ]{\includegraphics[width=0.2\textwidth]{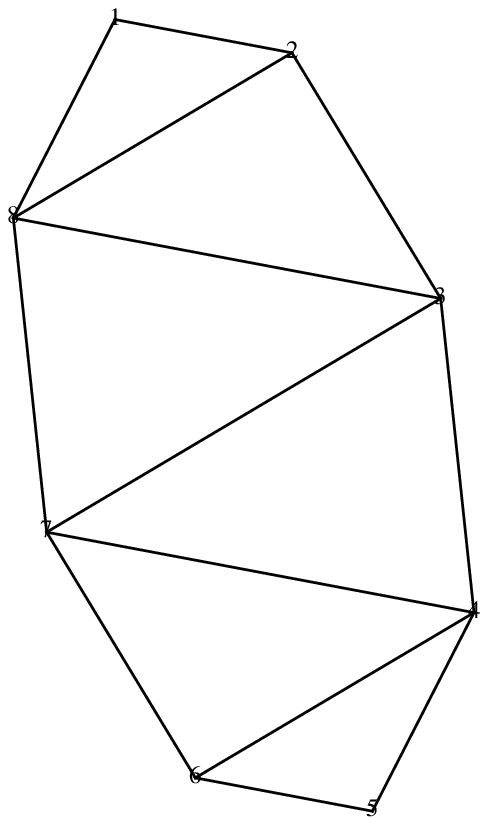}\label{triangles1b}}
\hspace{2cm}
\subfigure[ $S$  for $\theta \in \Sigma_1$ ]{\includegraphics[width=0.15\textwidth]{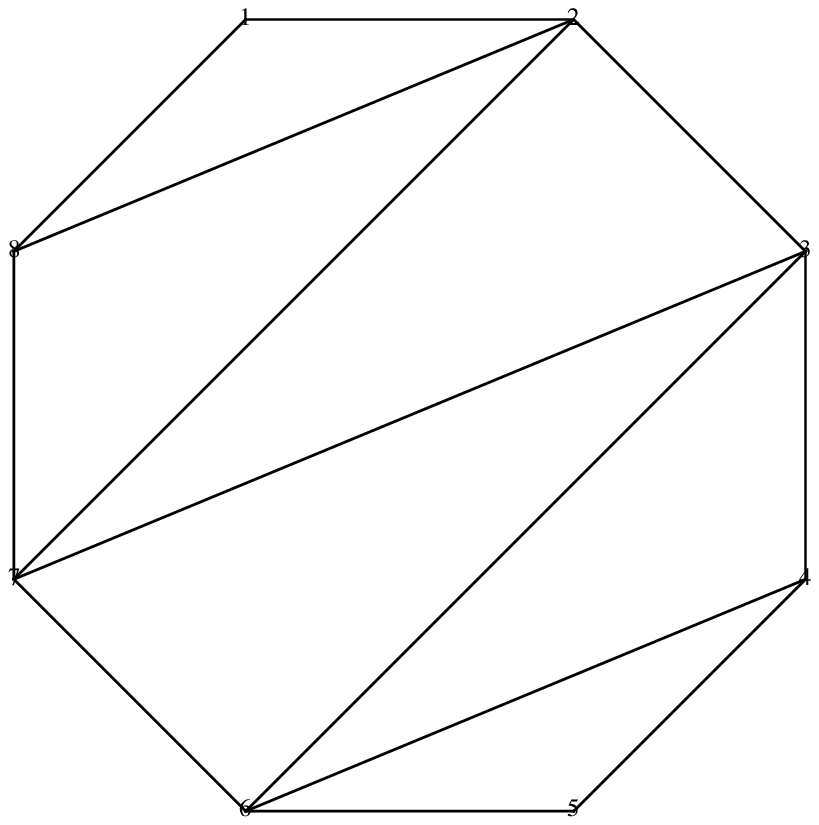}\label{triangles2}}
\hspace{3mm}
\subfigure[ $S'$  for $\theta \in \Sigma_1$]{\includegraphics[width=0.19\textwidth]{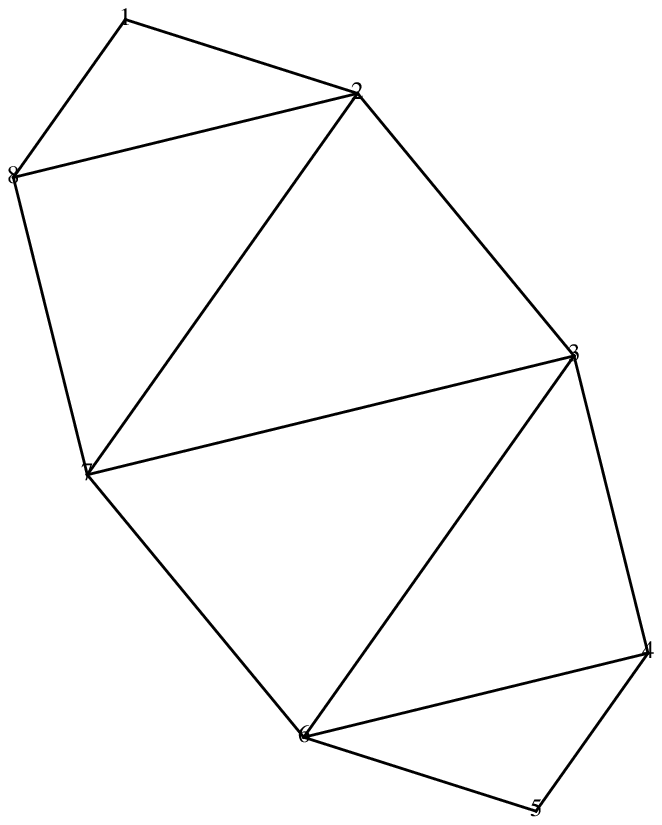}\label{triangles2b}}
\end{center}
\begin{center}
\subfigure[ $S$ for $\theta = \frac{\pi}{8}$ ]{\includegraphics[width=0.15\textwidth]{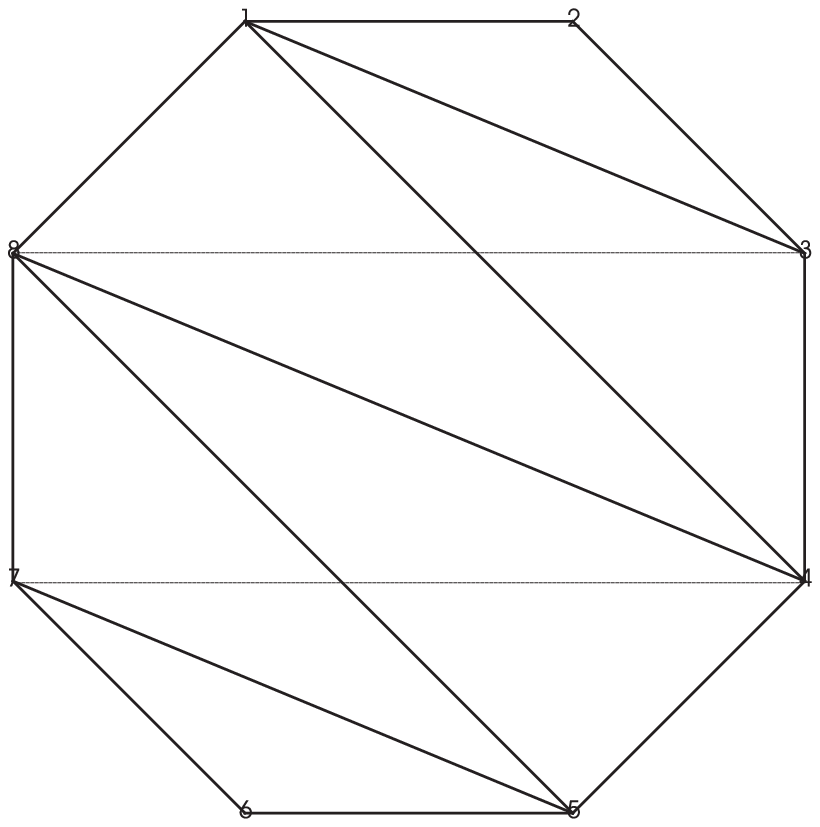}\label{triangles12}}
\hspace{9mm}
\subfigure[ $S'$ for $\theta =\frac{\pi}{8}$ ]{\includegraphics[width=0.14\textwidth]{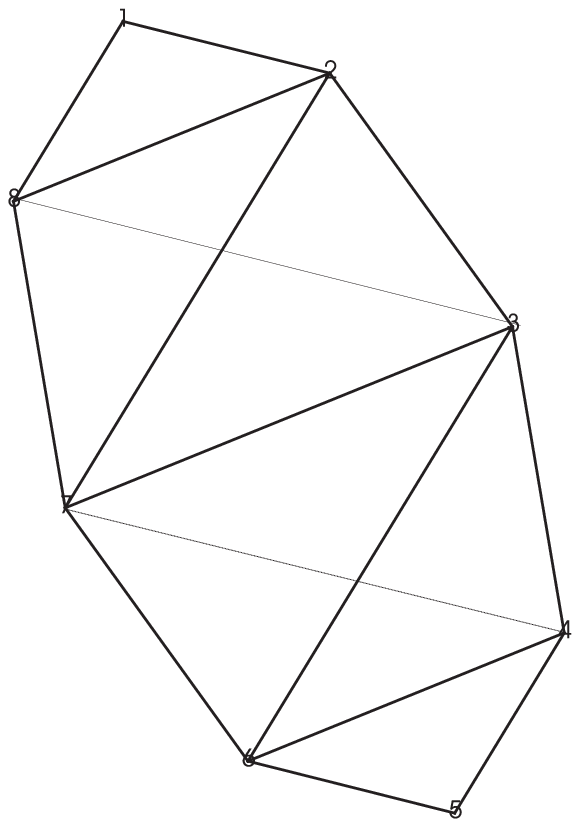}\label{triangles12b}}
\hspace{1mm}
\caption{Delaunay triangulations and their pull-backs when $S' = g_t^{\theta} \cdot S$ for a small $t>0$.\label{triangulations}}
\end{center}
\end{figure}
The deformations $S'= g_t ^{\theta} \cdot S_O$ in the direction $\theta = k \pi/8$ correspond to moving along one of geodesic rays through the center  of $\mathbb{D}$ which belong to the boundary of the iso-Delaunay tessellation. These eight rays are exactly the rays which limit to an ideal vertex of $\mathscr{O}$.  These deformations preserves exactly one of the axes of symmetry of $O$ (see Figure \ref{triangles12b}, where the axes has direction $\pi/8$).  Thus, the corresponding deformation $S'$ admits more than one Delaunay triangulations, for example any triangulation  obtain using the sides drawn in Figure \ref{triangles12b}. Figure \ref{triangles12} shows the affine pull-back of these sides and shows that they contain both the triangulations in Figures \ref{triangles1} and \ref{triangles2}, which are   symmetric with respect to the axes in direction $\pi/8$.  

Let us show that (marked) translation surfaces represented by  points on a side of $\mathscr{O}$ admit more than one  Delaunay triangulation.  Let $E$ be a side of $\mathscr{O}$ and let consider the two directions $\pi k/8$ and $\pi (k+1)/8$ which correspond to the rays limiting to the endpoints of $E$ on $\partial \Disk$. Let us first recall that triples $f: S_O \rightarrow S'$  correspond to metrics on  $S_O$, obtained as pull-back by $f$ of the flat metric on $S'$. The metrics corresponding to triples represented by points belonging to the side  $E \subset \Disk$  are precisely the metrics which make the two directions $\pi k/8$ and $\pi (k+1)/8$ perpendicular (equivalently, the directions obtained as image of the directions $\pi k/8$ and $\pi (k+1)/8$ under $f$ are orthogonal with respect to the flat metric on $S'$). 
\begin{figure}
 \begin{center}
\subfigure[$O$ pull-back ]{\includegraphics[height=0.225\textheight]{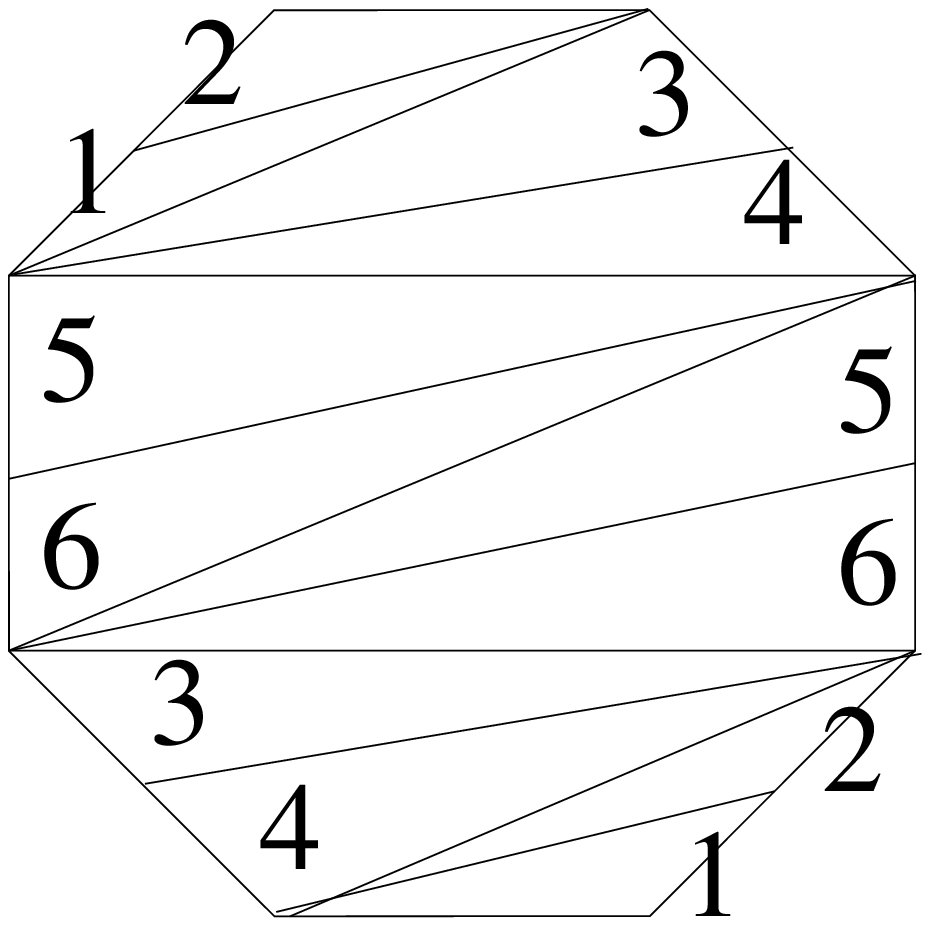}\label{71}}
\hspace{1mm}
\subfigure[$O$ decomposed as $L$]{\includegraphics[height=0.225\textheight]{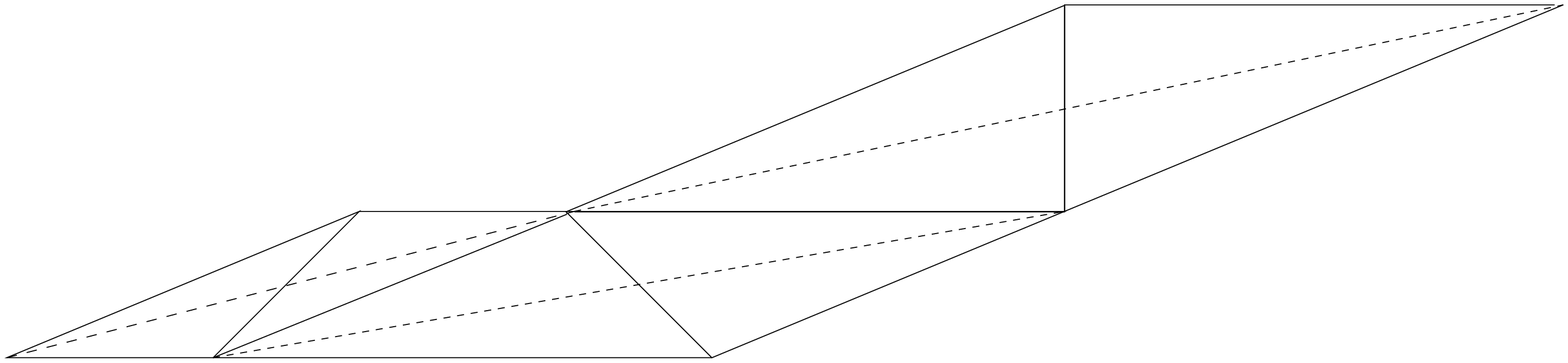}\label{72}}
\hspace{1mm}
\subfigure[$S'$ as $L$]{\includegraphics[height=0.225\textheight]{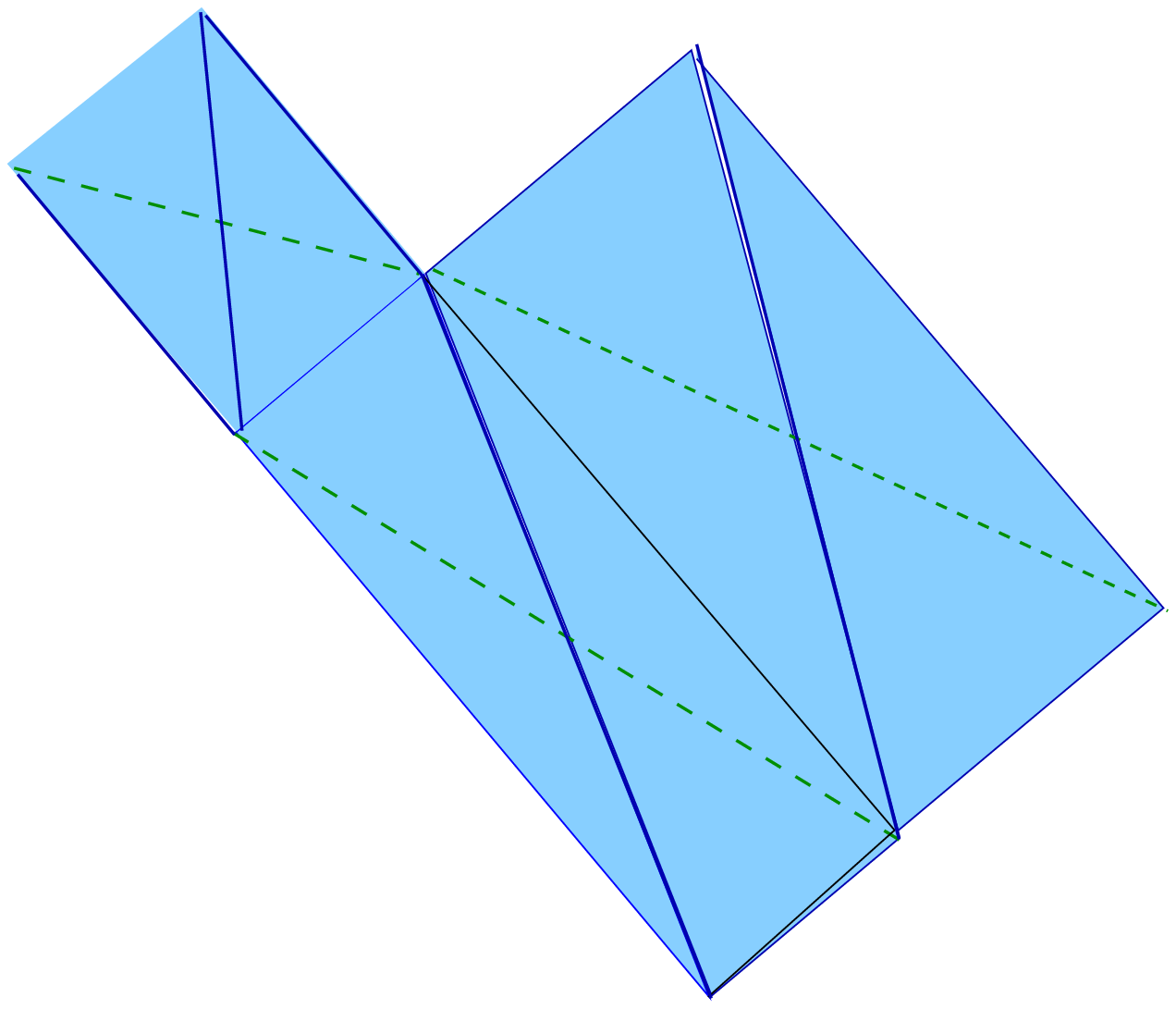}\label{73}}
\hspace{1mm}
\subfigure[ $S'$]{\includegraphics[height=0.225\textheight]{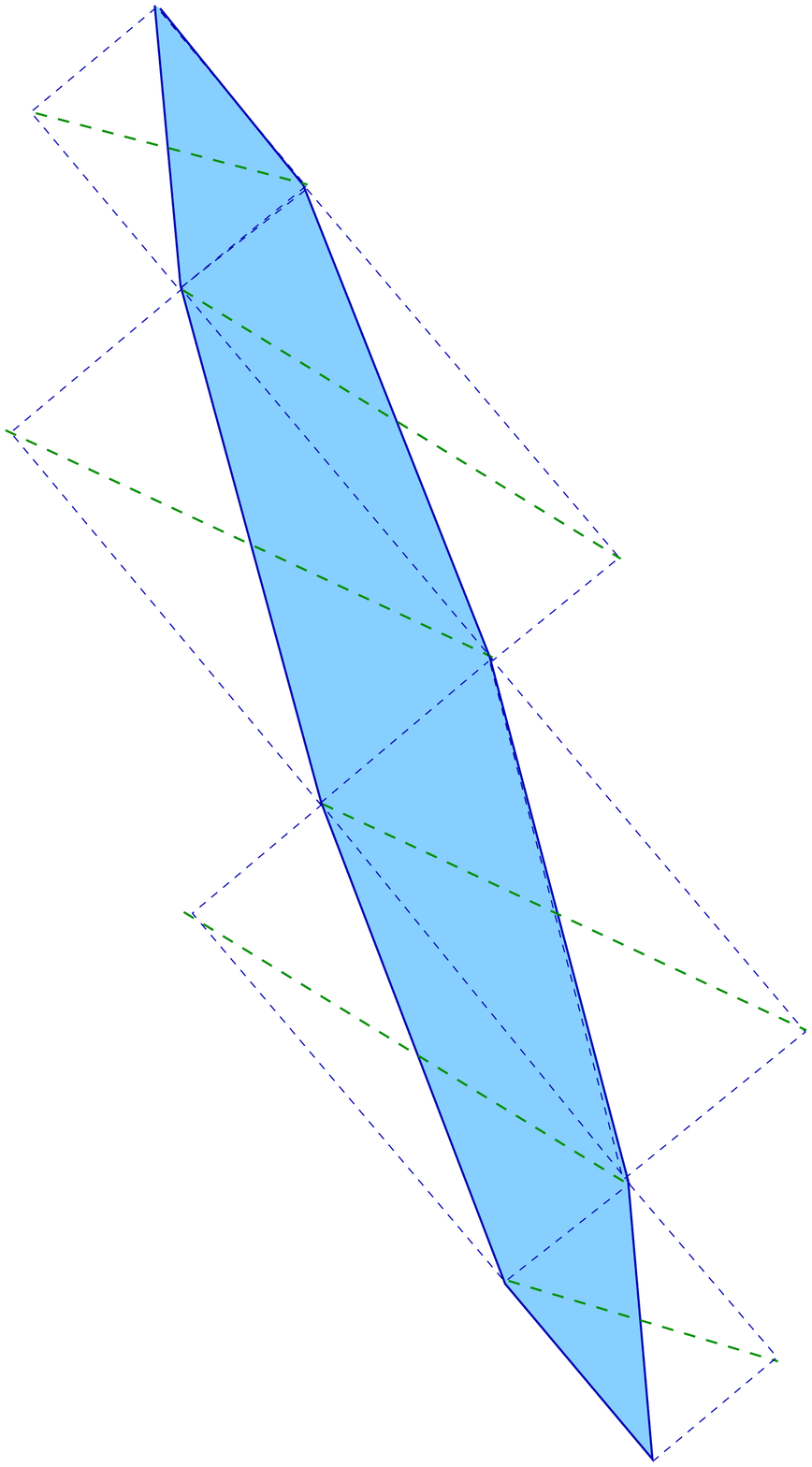}\label{74}}
\caption{L-shapes decompositions and Delaunay triangulations when  $[f]$ is on $E_0\subset \mathscr{O}$.\label{triangulationsL}}
\end{center}
\end{figure}
Consider for example the translation surfaces on the geodesic  ray $g_t ^{\theta} \cdot S_O$ with $\theta \in \Sigma_0$ and $t>0$, marked by the natural marking $\Phi_{g_t ^{\theta}}: S_O \rightarrow g_t ^{\theta} \cdot S_O$. Let $t_\theta$ be the first $t >0$ such that the triple $[\Phi_{g_t ^{\theta}}]$ 
is represented by a point of $E_0$.  If $t < t_\theta$, the Delaunay triangulation on $g_t ^{\theta} \cdot S_O$ is the pull-back by the marking of the triangulation in Figure \ref{triangles1}. This triangulation can be cut and pasted as in Figure \ref{71} to form an $L$, whose sides are in direction $\theta = 0$ and $\theta = \pi/8$ and make an obtuse angle. As $t$ increases, this obtuse angle shrinks and, as remarked above, the directions of the corresponding sides become orthogonal with respect to the flat metric exactly for  $t=t_\theta$. 
  Thus, the surface $g_t ^{\theta} \cdot S_O$ (see Figure \ref{74}) is translation equivalent to surface glued out of an $L$-shape with right angles  in Figure \ref{73} (which is obtained by cutting and pasting the affine octagon in Figure \ref{74}). Let us remark that Delaunay triangulations of translation equivalent surfaces are in correspondence under the cut and paste map.  At this point it is clear that the Delaunay triangulation of $g_t ^{\theta} \cdot S_O$ is not unique, since one can use either of the sides corresponding to the rectangle 
  diagonals in the $L$-shaped surface in Figure \ref{73} to construct a Delaunay triangulation.

The (unique) Delaunay triangulation of the surfaces $g_t ^{\theta} \cdot S_O$ with $t$ slightly bigger than $ t_\theta$  (and more precisely of all marked translation surfaces given by points in the tile $ \mathscr{F}\gamma$) are obtained by switching all sides which are diagonals in the L-shaped translation equivalent surface described above. The new sides are showed in Figures \ref{73}, \ref{74} by dotted lines.  Their pull-back to $O$ is shown in Figure \ref{71}. Let us remark that the sides of $O$ are not part of the triangulation and each triangle is obtained by gluing two of the shown triangles along a side of $O$ (as the numbers if Figure \ref{71} indicate).  This triangulation is obtained from \ref{triangles1} by Delaunay switches of sides of the octagon $O$. 
Similarly, one can prove that all translation surfaces corresponding to points on the sides of the ideal octagon tessellation are translation equivalent to translation surfaces obtained by gluing a right-angled $L$  and hence admit more than one Delaunay triangulation.

From the previous descriptions it is clear that the Delaunay triangulation changes by switches of sides of the octagon only when one crosses the boundary of the ideal octagon $\mathscr{O}$. This is the reason why in \S\ref{renormcutseq} we will consider the cutting sequence of the geodesic rays (\ref{raydef})  only  with respect to the sides of the ideal octagon tessellation. 

\paragraph{The dual tree.}
We now define a tree dual to the ideal octagon tessellation. This tree is similar in spirit to the spine defined by Smillie and Weiss \cite{SW:cha} but in this case the dual tree is not actually equal to the spine. Paths in this tree  will prove helpful in visualizing and describing the possible sequences of renormalization moves. 
\begin{figure}
\centering
\includegraphics[width=.6\textwidth]{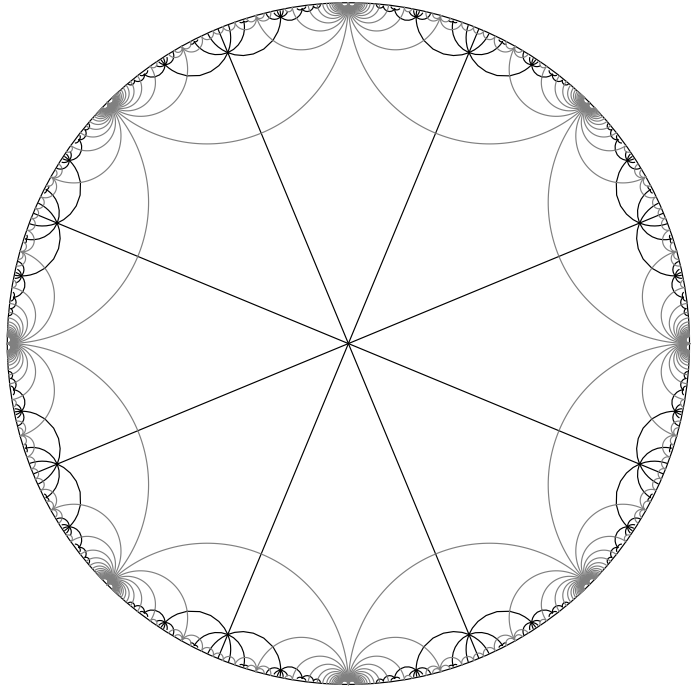}
\caption{\label{dualtree} The tessellation by ideal octagons (in gray) and the dual tree $\mathscr{T}$ (in black).}
\end{figure}
Consider the graph in the hyperbolic plane which has a vertex at the center of each ideal octagon and has an edge connecting centers of two ideal octagons when the octagons share a common side. The graph can be
embedded in $\Disk$, so that each vertex is the center of an ideal octagon and each edge is realized by a hyperbolic geodesics connecting the centers of the ideal octagons, as shown by the black lines in Figure \ref{dualtree}. We let $\mathscr{T}$ denote  this embedded graph.  
The following property is a consequence of the fact that the graph is  \emph{dual} to the ideal octagon tessellation.
\begin{rem}\label{tree} The graph $\mathscr{T}$ is a regular \emph{tree}, with eight branches at each vertex.
\end{rem}

\subsection{Renormalization and cutting sequences of Teichm{\"u}ller geodesics.}\label{renormcutseq}
One way to construct a renormalization scheme to study linear flows in a direction $\theta$ is to use a discrete sequence of elements of the Veech group, which approximate the Teichm{\"u}ller geodesic ray which contracts the direction $\theta$. 
The idea behind our renormalization is that as one flows along the geodesic, one crosses different iso-Delaunay tiles. The successive derivations of the cutting sequence will turn out to correspond to the symbolic coding of the same trajectory with respect to sides which belong to affine triangulations on $S_O$ which are pull back of  Delaunay triangulations as one moves along the geodesics. 


Let $\theta$ be a fixed direction, that we think of as the direction of a trajectory $\tau $ on  $S_O$. Recall that we denote by $\rho_\theta$ the matrix corresponding to counterclockwise rotation by $\theta$ and by $g_t ^{\theta}:=  \rho_{\frac{\pi}{2}-\theta}^{-1} \, g_{t}\ \rho_{\frac{\pi}{2}-\theta} $ a $1$-parameter subgroup  conjugate to the geodesic flow whose linear action on  $S_O$, for  $t>0$, contracts the direction $\theta$ and expands the perpendicular direction. Let us therefore consider the \emph{Teichm{\"u}ller geodesic ray}
\be\label{raydef}
\tilde{r}_\theta : = \{ g_t ^{\theta} \cdot S_O \}_{t\geq 0}, 
\ee\noindent
which, using the identification of ${\cal{\tilde M}}_A(S)$ with $T_1 \mathbb{D}$ explained in \S\ref{Teichdisksec}, corresponds to a geodesic ray in $T_1 \mathbb{D}$.  The projection $r_\theta$ of the Teichm{\"u}ller ray $\tilde{r}_\theta$ to   $\Disk$ is a half ray, starting at the center $0 \in \mathbb{D}$ and converging to the point on $\partial \mathbb{D}$ representing the linear functional given by the row vector $\begin{pmatrix}\cos( \frac{\pi}{2}-\theta)&  - \sin(\frac{\pi}{2}- \theta))\end{pmatrix} =\begin{pmatrix} \sin \theta&  - \cos \theta\end{pmatrix}$. Thus, according to the conventions in the previous section, one can check that the ray $r_\theta$ in $\Disk$  is the ray converging to the point on
 $e^{2\pi i (\pi+ 2 \theta)} \in \partial \mathbb{D}$. In particular, $r_0$ is the ray in $\mathbb{D}$ obtained intersecting   the negative real axes in $\mathbb{C}$ with $\mathbb{D}$ and  $r_\theta$ is the ray that makes  an angle $2\theta$ (measured clockwise) with the ray $r_0$. Let identify  $\mathbb{D}$ with $\mathbb{H}$ by $\phi$ (see \S\ref{discsec}) and $\partial \mathbb{D}$ with $\partial \mathbb{H} = \mathbb{R}$ by extending $\phi$ by continuity. If $x \in \mathbb{R}$ is the coordinate for $\partial \mathbb{H}$ obtained using the chart $\phi_1$ (see \S\ref{discsec}), one can check the following.
\begin{rem}\label{uthetacorrespondence}
The ray $r_{\theta}$ has endpoint
$ x(\theta) = -\frac{1}{\cot \theta}$. Moreover, for any $0\leq i \leq 7$, $r_\theta$ crosses the side $E_i = \nu_i  E_0$  of the ideal octagon $\mathscr{O}$  if and only if $\theta \in \Sigma_i = \nu_i^{-1} \Sigma_0$.
\end{rem}

\paragraph{Combinatorial geodesics.}
Let us explain how to associate to the geodesic path $r_\theta$ 
 a path $p_\theta$ in the tree $\mathscr{T}$,  which we call the \emph{combinatorial geodesic} approximating $r_\theta$.  We say that $\theta$ is a \emph{cuspidal direction} if the ray $r_{\theta}$ converges to a vertex of an ideal triangle. This is equivalent to saying that the corresponding flow on $S_O$ consists of periodic trajectories\footnote{The proof of this fact follows by combining Proposition 2.3.2 in \cite{SU:sym} with Proposition \ref{CFandcuttseq} here and it is proved below, see the paragraph following Proposition \ref{CFandcuttseq}.}. 
 Assume first that $\theta$ is a not cuspidal direction. 
 Then $r_\theta$ crosses an infinite sequence of sides of ideal octagons of the tessellation. 
In this case, the associated combinatorial geodesic $p_\theta$ on $\mathscr{T}$ is a continuous semi-infinite path on $\mathscr{T}$ which starts at $\underline{0}$ and goes, in order, through the edges of $\mathscr{T}$ which are transversal to the ideal octagon sides crossed by  the geodesics.
If $\theta$ is a cuspidal direction then $r_\theta$ crosses only a finite sequence of sides. In this case we associate to $r_{\theta}$ a finite path, which ends with the edge $e'$ transversal to the last ideal octagon $\mathscr{O}'$ crossed. We comment below on  a variation of this convention which associates two infinite paths to each ray in a cuspidal direction instead. 


\subsection{Teichm\"uller cutting sequences.}\label{Teichcuttseqsec}
In this section we will describe a coding of hyperbolic geodesics in the spirit of the Markov coding described by Series in \cite{Se:geoMc, Se:sym}.  We will call the coding assigned to a given geodesic its \emph{Teichm\"uller cutting sequence}.
The first step in constructing this coding is labeling the edges of the ideal octagon tessellation. In order to do this we introduce a subgroup of $V_P(S_O)$.
 Define the group $V_{\mathscr{O}}$ to be the group generated by hyperbolic reflections in the sides of $\mathscr{O}$.
The element $\gamma$ is the hyperbolic reflection which, when  acting on the right, fixes side $E_0$, the other reflections generating $V_{\mathscr{O}}$ are $\gamma_i:= \nu_{i} ^{-1} \gamma \nu_i$, $i=1, \dots, 7$, so that the right action of $\gamma_i$ is the hyperbolic  reflection fixing side $E_i$. 
It follows from the Poincar\'e Polyhedron Theorem (see \cite{RA:fou}) that the ideal octagon $\mathscr{O}$ is a fundamental domain for the subgroup $V_{\mathscr{O}}$. Since this fundamental domain is built from eight copies of the fundamental domain of $V_P(S_O)$ it follows that $V_{\mathscr{O}}$ is a subgroup of index eight. Since $\mathscr{O}$ is a fundamental domain every point in $\Disk$ is equivalent to a point in $\mathscr{O}$ by means of an element of $V_{\mathscr{O}}$. For a general fundamental domain of a Fuchsian group it could be the case that a point would be equivalent to more than one point on the boundary of a fundamental domain. The different points would be related by the side paring elements in the group as discussed in \cite{RA:fou}, \S6.6. In our case since each side pairing element pairs a side with itself, that is to say that since the group $V_{\mathscr{O}}$ is generated by reflections, every point is equivalent to a unique point $\mathscr{O}$. A fundamental domain with this property is called a \emph{strict} fundamental domain.

Now we can label the edges in the tessellation by elements of $\{0, \dots, 7\}$. For $i=0, \dots, 7$, we label the side $E_i$ of $\mathscr{O}$  by $i$  and then transport the labeling to all other sides  by the action of $V_{\mathscr{O}}$ using the fact that each point on a side of the tessellation is equivalent to a unique point in $\partial\mathscr{O}$. The labeling has the following property. If the side $E$ is shared by two neighboring ideal octagon tiles  and and $E$ carries the label $i$ for some $0\leq i \leq 7$, then there exist $n\in \mathbb{N}$ and $s_i \in \{0, \dots, 7\}$ for $i=0, \dots, n$, such that  one tile is $ \mathscr{O}\gamma_{s_n} \dots   \gamma_{s_1}\gamma_{s_0} $  and   
the other is $  \mathscr{O}\gamma_i  \gamma_{s_n} \dots  \gamma_{s_1} \gamma_{s_0}$. 

 Following Series (see \cite{Se:geoMc}, \cite{Se:sym} and the references therein), one can associate to any geodesic a \emph{cutting sequence}. 
We define the Teichm\"uller cutting sequence  associated to a geodesic to be the sequence of labels of sides crossed in the ideal octagon tessellation. In particular, if $r_{\theta}$ is a geodesic ray, we associate to it a sequence $c(r_{\theta})$ of labels in $\{0, \dots, 7\}$. If ${\theta}$ is a cuspidal direction, the sequence of sides crossed and hence   $c(r_{\theta})$ is finite, otherwise  $c(r_{\theta}) \in \{0, \dots, 7\}^{\mathbb{N}}$. As geodesics do not backtrack, in the sequence $c(r_{\theta})$ the same symbol $i$ never appears twice in a row. One can easily  see that this is the only restriction, thus a sequence $\{c_i \}_{i\in \mathbb{N}} \in \{0, \dots, 7\}^{\mathbb{N}}$ is the cutting sequence of a ray $r_{\theta}$ if and only if,  for all $i\in \mathbb{N}$,  $c_i=j$ for some $0\leq j \leq 7$  implies $c_{i+1} \neq j$.

Since edges of the dual tree $\mathscr{T}$ are in one to one correspondence with  sides of ideal octagons (each edge being transversal to one ideal octagon side), one obtains in this way a labeling of the edges of the tree $\mathscr{T}$. The labeling of the edges is shown in Figure \ref{Seriestreefig}. The sequence of labels of the edges of the path $p_\theta$ associated to the ray $r_{\theta}$ gives again the cutting sequence $c(r_{\theta})$. 

\begin{center}
\begin{figure}
\subfigure[Labelling of the tree using reflections in $V_{\mathscr{O}}$]{\includegraphics[width=0.44\textwidth]{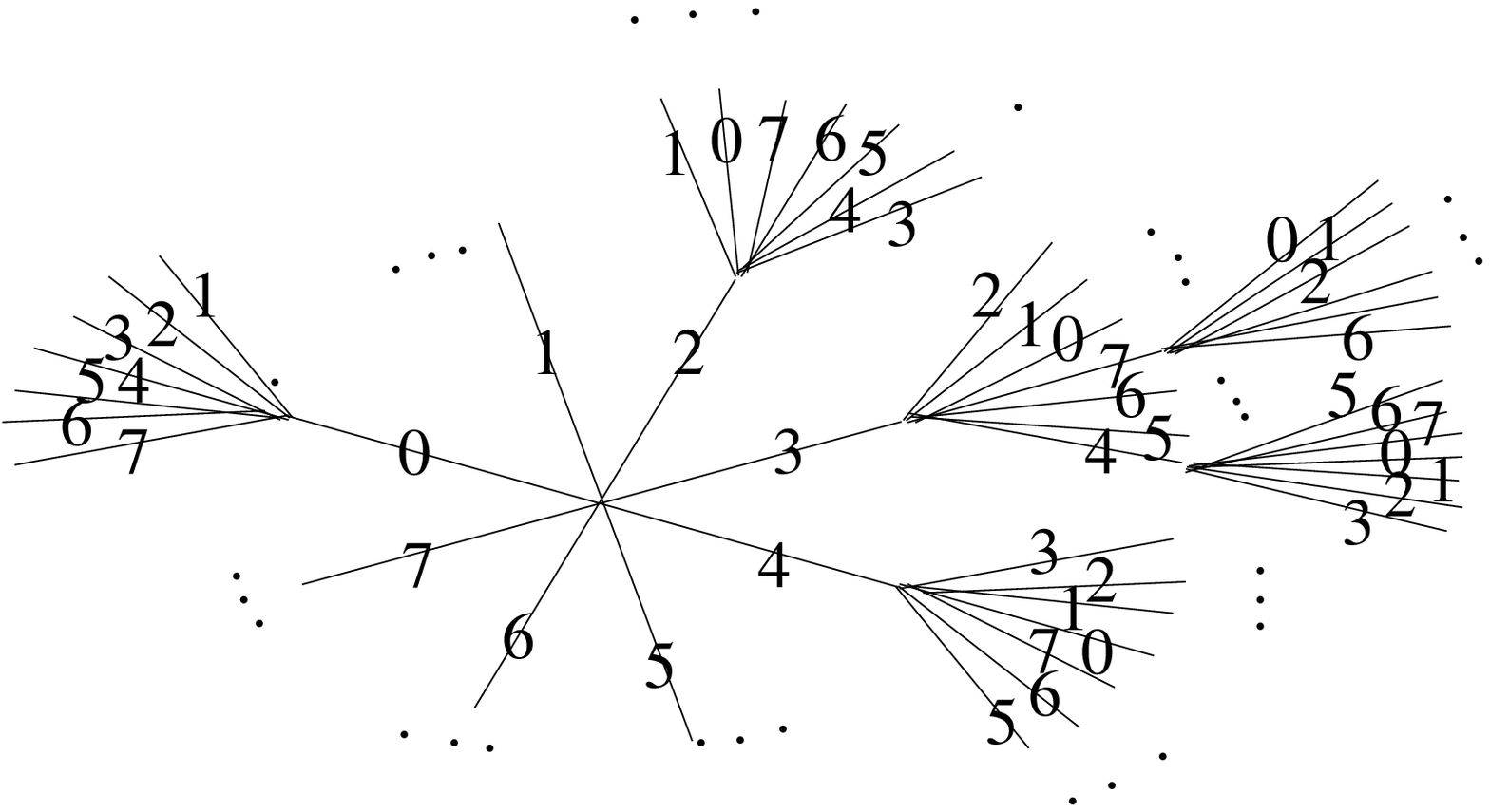}\label{Seriestreefig}}
\hspace{9mm}
\subfigure[Normalized labeling of the tree using $\gamma \nu_i$ ]{\includegraphics[width=0.44\textwidth]{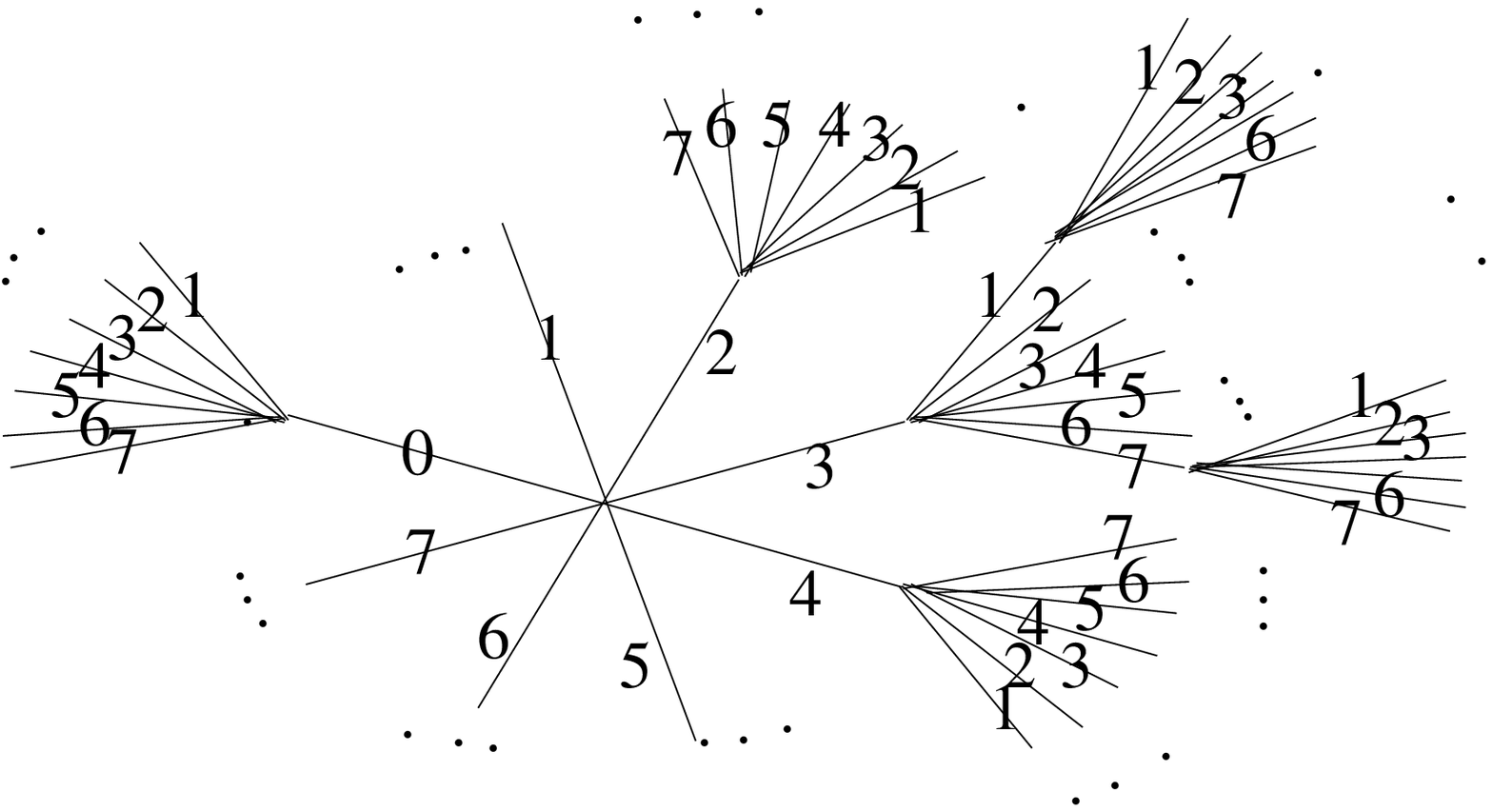}\label{treelabeling}}
\caption{The two labelings of the tree edges on a schematic picture of $\mathscr{T}$.}
\end{figure}
\end{center}

\paragraph{Teichm\"uller cutting sequences  and derived  sequences.}
If $w$ is a cutting sequence of a trajectory $\tau$ in direction $\theta$, let $r_{\theta}$ be the geodesic ray which contracts the direction $\theta$ given in (\ref{raydef}). We are going to use the  cutting sequence $c(r_{\theta})$  of $r_{\theta}$ to give a geometric interpretation of the derived sequences $w^{(k)}$. 
If the cutting sequence $c(r_{\theta})$ starts with $s_0, s_1, \dots, s_k$, then for each $k\geq 1$, the element $\gamma^{(k)}:= \gamma_{s_0} \dots \gamma_{s_{k-2}} \gamma_{s_{k-1}} $ is the element of $V(S)$ which acts on the right on  $\Disk$ by mapping the $k^{th}$ ideal octagon tile crossed by $r_{\theta}$ back to $\mathscr{O}$.  
Since $\gamma^{(k)} \in V(S)$, there is an affine automorphism $\Psi_{\gamma^{(k)}}: S_O \rightarrow S_O$ whose derivative is  $\gamma^{(k)}$. Moreover, by Remark \ref{trivialkernel}, since by Lemma \ref{Veechprop} the kernel of the Veech homomorphism is trivial, this affine automorphis is unique and can be obtained composing the canonical map  $ \Phi_{\gamma^{(k)}} : S_O \rightarrow  {\gamma^{(k)} \cdot S_O } $ with a uniquely defined translation equivalence $\Upsilon^{(k)}  :{\gamma^{(k)} \cdot S_O }  \rightarrow O$.

Consider the image $\Psi_{\gamma^{(k)}} O \subset S_O$ of the standard octagon $O \subset S_O$. Since $\Psi_{\gamma^{(k)}} = \Upsilon^{(k)} \Phi_{\gamma^{(k)}}$, $\Psi_{\gamma^{(k)}} O $ is obtained by first linearly deforming $O$ to the affine octagon $O^{(k)}: =\gamma^{(k)}  O$ and then cutting it and pasting back to $O$ according to $\Upsilon^{(k)}$ (see for example Figure \ref{shearedoctagon1}, where $k=1$ and $\gamma^{(k)} = \gamma$).  If a side of $O$ is labeled by $L \in \{ A,B, C, D\}$, let us label by $L$ also its image under $\Psi_{\gamma^{(k)}}$. This gives  a labeling of the sides of $\Psi_{\gamma^{(k)}} O $ by  $\{ A,B, C, D\}$  which we call the \emph{labeling induced by} $\Psi_{\gamma^{(k)}}$.  The connection between Teicm\"uller cutting sequences and derived cutting sequences is the following.  

\begin{prop}\label{derivedseq}
The $k^{th}$ derived sequence $w_k$ of the cutting sequence $w$ of a trajectory on $S_O$ is the cutting sequence of the same trajectory  with respect to the sides of $\Psi_{\gamma^{(k)}} O $ with the labeling induced by $\Psi_{\gamma^{(k)}}$. 
\end{prop}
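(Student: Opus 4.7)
The proof proceeds by induction on $k$. The base case $k=0$ is immediate: $\gamma^{(0)}$ is the identity element, so $\Psi_{\gamma^{(0)}} O = O$ with the standard labeling, and $w_0 = w = c(\tau)$ is by definition the cutting sequence of $\tau$ read against $O$.

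For the inductive step, assume the statement at level $k$. The key structural observation is the recursive decomposition
\begin{equation*}
\gamma^{(k+1)} = \gamma^{(k)}\, \gamma_{s_k}, \qquad \Psi_{\gamma^{(k+1)}} = \Psi_{\gamma^{(k)}}\circ \Psi_{\gamma_{s_k}},
\end{equation*}
where the second identity follows from the first because the Veech homomorphism has trivial kernel (Lemma \ref{Veechprop}). Hence the sides of $\Psi_{\gamma^{(k+1)}} O$ are the $\Psi_{\gamma^{(k)}}$-images of the sides of $\Psi_{\gamma_{s_k}} O$, and the induced labeling on $\Psi_{\gamma^{(k+1)}} O$ is obtained by transporting the $\Psi_{\gamma_{s_k}}$-induced labeling along $\Psi_{\gamma^{(k)}}$. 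Reading $\tau$ against $\Psi_{\gamma^{(k+1)}} O$ is therefore equivalent to reading the ``after-$k$-renormalization-steps'' trajectory $\tau_k$ (whose cutting sequence is $w_k$ by the inductive hypothesis and Proposition \ref{relationsrenormalizations}) against $\Psi_{\gamma_{s_k}} O$ with its induced labeling.

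The inductive step thus reduces to the one-step ($k=1$) assertion applied to $\sigma := \tau_k$ of sector $s_k$: namely, that $w_{k+1} = n(w_k)'$ coincides with the cutting sequence of $\sigma$ against $\Psi_{\gamma_{s_k}} O$ with induced labeling. For this one-step case, the three ingredients are (i) the conjugation identity $\gamma \nu_s = \nu_s \gamma_s$, immediate from $\gamma_s = \nu_s^{-1}\gamma\nu_s$, which yields $\Psi_\gamma \Psi_{\nu_s} = \Psi_{\nu_s}\Psi_{\gamma_s}$; (ii) the formula $\sigma_1 = \Psi_\gamma\, n(\sigma) = \Psi_{\gamma\nu_s}\sigma = \Psi_{\nu_s}\Psi_{\gamma_s}\sigma$ from (\ref{renormalizationtrajectories}) and (\ref{derivedtraj}); and (iii) the relabeling law (\ref{relabelingeq}), $c(\nu_s \cdot \rho) = \pi_s \cdot c(\rho)$. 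Combining these gives $c(\sigma_1) = \pi_s \cdot c(\Psi_{\gamma_s}\sigma)$, and since $\gamma_s$ is an involution one identifies $c(\Psi_{\gamma_s}\sigma)$ with the cutting sequence of $\sigma$ against $\Psi_{\gamma_s} O$ with its induced labeling. The normalization permutation $\pi_s$ that appears on the combinatorial side is then shown to be exactly absorbed by the relabeling that results from transporting along $\Psi_{\gamma_s}$ rather than just along $\Psi_\gamma$; equivalently, one verifies the compatibility $(\pi_s \cdot u)' = \pi_s \cdot u'$ of derivation with relabeling, together with the fact that $c(\Psi_{\gamma_s}\sigma) = c(\sigma)'$ for $\sigma$ of sector $s$.

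The conceptually interesting part is the geometric statement that successive derivations correspond to crossing adjacent ideal octagon tiles in the iso-Delaunay tessellation; the main technical obstacle is the careful bookkeeping of labels through the composition of the normalizing isometries $\Psi_{\nu_{s_j}}$ and the shearing affine automorphism $\Psi_\gamma$, checking at each step that the label permutation produced by the normalization step exactly matches the label transport along $\Psi_{\gamma^{(k)}}$ prescribed by the induced labeling.
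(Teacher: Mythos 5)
Your argument follows the same route as the paper's proof: induct on $k$, pull everything back by $\Psi_{\gamma^{(k)}}^{-1}$ so that the claim becomes a statement about cutting sequences with respect to $O$ itself, and establish the one-step identity by combining the conjugation $\gamma_s=\nu_s^{-1}\gamma\nu_s$, the relabeling law (\ref{relabelingeq}), the derivation formula (\ref{derivedtraj}), and the commutation $(\pi\cdot w)'=\pi\cdot w'$. Your final identity $c(\Psi_{\gamma_s}\sigma)=c(\sigma)'$ for $\sigma$ of sector $s$ is exactly the paper's key step, and the permutation bookkeeping you describe is how the paper cancels $\pi_{s_k}$ against $\pi_{s_k}^{-1}$.

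The one point that needs repair is the identification, in your middle paragraph, of $\Psi_{\gamma^{(k)}}^{-1}\tau$ with the renormalized trajectory $\tau_k$ of \S\ref{trajrenormsec} and of its cutting sequence with $w_k=n(w_{k-1})'$. These are different objects: $\tau_k=\Psi_{\nu(k)}^{-1}\tau$ with $\nu(k)=\nu_{s_0}^{-1}\gamma\cdots\nu_{s_{k-1}}^{-1}\gamma$, whereas $\Psi_{\gamma^{(k)}}^{-1}\tau=\Psi_{\gamma_{s_{k-1}}}\cdots\Psi_{\gamma_{s_0}}\tau$. They differ by an isometry, their cutting sequences differ by a permutation, and --- more importantly --- the sector of $\tau_k$ is the $k$-th entry of the \emph{normalized} Teichm\"uller cutting sequence (equivalently of the Farey itinerary of $\theta$, Proposition \ref{relationsrenormalizations}), while the indices $s_k$ entering $\gamma^{(k)}=\gamma_{s_0}\cdots\gamma_{s_{k-1}}$ are the entries of the \emph{un-normalized} sequence $c(r_\theta)$; these two index sequences do not coincide in general (they are related by Lemma \ref{dictionary}). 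Despite the notation $w_k$ in its statement, the proposition concerns the plain derived sequences $w^{(k)}$ with $w^{(k+1)}=(w^{(k)})'$; the normalized version, with $\Psi_{\nu(k)}$ in place of $\Psi_{\gamma^{(k)}}$, is the separate Proposition \ref{wknormalized}. The fix is essentially notational: run your induction on $\tau^{(k)}:=\Psi_{\gamma^{(k)}}^{-1}\tau$, whose direction does lie in $\Sigma_{s_k}$ and whose cutting sequence is $w^{(k)}$ by the inductive hypothesis; your one-step computation then applies verbatim with $\sigma=\tau^{(k)}$ and gives $c(\tau^{(k+1)})=(w^{(k)})'=w^{(k+1)}$, which is precisely the paper's argument.
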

\noindent Before giving the proof of Proposition \ref{derivedseq}, let us remark that as $k$ increases the affine octagons $O^{(k) } \subset \mathbb{R}^2$ become more and more  stretched in the direction $\theta$,  meaning that the directions of the sides of $O^{(k)}$ tend to $\theta$ . This can be checked by  verifying that the sector of directions which is the image of $\RP \backslash \Sigma_{s_0}$ under $\gamma^{(k)} $  is shrinking to the point corresponding to the line in direction $\theta$. This distortion of the octagons corresponds to the fact that as $k$ increases a fixed trajectory hits the sides of  $O_k$ less often which is reflected by the fact that in deriving a sequence letters are erased.  We also remark that the sides of  $\Psi_{\gamma^{(k)}} O $ are a subset of the affine triangulation of $O$ obtained as  the pull-back of Delaunay  triangulations corresponding to the $k^{th}$ ideal octagon tile entered by $r_{\theta}$. 

\begin{proofof}{Proposition }{derivedseq}  
To prove that $w^{(k)}$ is the cutting sequence of $\tau$ with respect to $\Psi_{\gamma^{(k)}} O $, one can equivalently apply the affine diffeomorphism $\Psi_{\gamma^{(k)}}^{-1}$ and prove that $w^{(k)}$ is the cutting sequence of $ \Psi_{\gamma^{(k)}}^{-1} \tau$ with respect to $  O$. Let us show this by induction on $k$. Set $\tau^{(0)}:=\tau$ and  for $k>0$ set $\tau^{(k)}:= \Psi_{\gamma^{(k)}}^{-1}  \tau 
= \Psi_{\gamma_{s_{k-1}}} \dots  \Psi_{\gamma_{s_{0}}} \tau $ (note that $\gamma_i$ are involutions). The base of the induction is simply $w=c(\tau)$.   For any $k\geq 0$, assume that $w^{(k)}$ is the cutting sequence of $ \tau^{(k)}$  with respect to $O$.  Since the $k^{th}$ element of the cutting sequence is $s_k$, the direction of $ \tau^{(k)}$ belongs to $\Sigma_{s_{k}}$. 
Thus, $\nu_{s_{k}} \tau^{(k)} $ has direction in $\Sigma_0$ and by (\ref{relabelingeq}) its cutting sequence is $\pi_{s_{k}}\cdot w^{(k)}$. 
Thus, by (\ref{derivedtraj}), 
 $(\pi_{s_{k}}\cdot w^{(k)})'$ is the cutting sequence of  $\Psi_\gamma \nu_{s_{k}}  \tau^{(k)}$. Thus, again by (\ref{relabelingeq}), if we act by $\nu_{s_{k}}^{-1}$ and remark that $\Psi_{\gamma_{s_k}} = \nu_{s_{k}}^{-1} \Psi_{\gamma} \nu_{s_{k}} $, the cutting sequence of  $ \tau^{(k+1)} = \Psi_{\gamma_{s_k}} \, \tau^{(k)} = \nu_{s_{k}}^{-1}\Psi_{\gamma} \nu_{s_{k}}   \tau^{(k)} $ is $\pi_{s_{k}}^{-1} \cdot (\pi_{s_{k}}\cdot w^{(k)})'$ which  is equal to $(w^{(k)})' = w^{(k+1)}$ because the action by permutations commutes with derivation (i.~e.~$(\pi \cdot w )' =\pi \cdot w' $). 
\end{proofof}

\subsection{Normalized Teichm\"uller cutting sequences.}
The Teichm\"uller cutting sequences above are invariant under the  action $V_\mathscr{O}$ on geodesics but they are not invariant under the action of $V_P(S_O)$.  Let us describe now a different (and invariant) way of assigning to Teichm\"uller geodesics a cutting sequence, that we call  \emph{normalized} cutting sequence.  In \S\ref{crosssec} below we show  that normalized Teichm\"uller  cutting sequences have a natural  interpretation as a coding of  the first return map of the geodesic flow on the Teichm\"uller orbifold represented by the fundamental domain $\mathscr{F}$ to the side $E_0$ of  $\mathscr{F}$.  As Proposition \ref{derivedseq} shows that Teichm\"uller cutting sequences help understand derived sequences $w^{(k)}$ of cutting sequences of linear trajectories, we show in Proposition \ref{wknormalized} below that \emph{normalized} Teichm\"uller cutting sequences help understand the cutting sequences $w_k$ in the renormalization scheme explained in \S \ref{renormschemessec}. 


Let us consider the elements $\nu_k ^{-1} \gamma  $ of the Veech group, for $k=0, \dots, 7$, 
and describe their action on $\Disk$. 
 Let $\mathscr{O}_k  $ be the ideal octagon obtained by reflecting $\mathscr{O}$ through the side $E_k$.   The element $\nu_k^{-1} \gamma$, acting on the right (first by $\nu_k^{-1}$, then by $\gamma$), maps the ideal octagon $\mathscr{O}_k$ back to the octagon $\mathscr{O}$, sending the side $E_k = E_0 \nu_k $ to $E_0$, or in other words, $\mathscr{O} = \mathscr{O}_k \nu_k ^{-1}\gamma$ and  $E_0 = E_k \nu_k^{-1} \gamma$.
  
 One can associate to any (segment of) a hyperbolic geodesic $g$ a \emph{normalized} cutting sequence, that we denote by $\bar c(g)$ as follows. Entries $\bar c(g)_i$ of the sequence $\bar c(g)$ are associated to exiting sides of ideal octagon tiles crossed by $g$, or in other words we associate a label to the side $E''$ 
if $g$ enters a tile $\mathscr{O}'$ though the side $E'$ and leaves it through the side $E''$. 
 Let  $\nu' \in V_P(S_O)$ be  the unique element which, acting on the right, sends  $\mathscr{O'}$ to $\mathscr{O}$ and  the entering side $E'$ to $E_0$. The existence and uniqueness of such element follows by Poincar\'e Polyhedron Theorem and the fact that $\mathscr{F}$ is a strict fundamental domain for $V_P(S_O)$ (as discussed in \S\ref{Teichcuttseqsec}).
 If the exiting side $E''$ is mapped by $\nu' $ to $E_k$, than we give to  $E''$  label $k$. By construction $k$ is in $ \{ 1, \dots, 7\}$ since $g$ cannot enter and leave through the same side. If both the forward and backward  endpoints of $g$ are not ideal vertexes,  $g$ crosses infinitely many tiles and  $\bar c(g) $ is a bi-infinite sequence in $ \{ 1, \dots, 7\}^{\mathbb{Z}}$, defined up to shift. If we are given a reference point $p$ on $g$, we set $\bar c(g)_0$ to be the label of the exiting side of the tile to which $p$ belongs. 
 
This coding of geodesics is by construction invariant under the action of the Veech group. 
One can obtain the normalized coding $\bar c (g)$ from the Teichm\"uller cutting sequence $c(g)$ using the following Lemma. 
 \begin{lemma}\label{dictionary}
Assume that in the Teichm\"uller cutting sequence $c(g)$ we have $c(g)_{l-1} = i  $ and  $c(g)_{l} = j  $  for some $l\in \mathbb{Z}$. Then in the normalized cutting sequence we have  $\bar c(g)_{l} = k  $ where $k $ is the unique $k \in \{1, \dots, 7 \}$ such that $ \nu_j \nu_i ^{-1}  = \nu_k$ in $D_4 = D_8/\{\pm I\}$. 
\end{lemma}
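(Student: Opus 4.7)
The plan is to unfold both definitions at step $l$ and compare them directly. Let $\mathscr{O}'$ denote the $l$-th ideal octagon tile crossed by the geodesic $g$, with entering side $E'$ and exiting side $E''$; by definition of the Teichm\"uller cutting sequence the labels of $E'$ and $E''$ are $c(g)_{l-1}=i$ and $c(g)_l=j$ respectively. Since $\mathscr{O}$ is a strict fundamental domain for $V_{\mathscr{O}}$, there is a unique $g'\in V_{\mathscr{O}}$ with $\mathscr{O}'=\mathscr{O} g'$, and the construction of the labeling by transport under $V_{\mathscr{O}}$ gives
\[
E'=E_i g',\qquad E''=E_j g'.
\]

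First I would identify $\nu'\in V_P(S_O)$ explicitly. The condition $\mathscr{O}'\nu'=\mathscr{O}$ forces $g'\nu'$ to lie in $\mathrm{Stab}_{V_P(S_O)}(\mathscr{O})$. Because $\mathscr{O}$ is tiled by the eight triangles $\mathscr{F}\nu_0,\ldots,\mathscr{F}\nu_7$ and $\mathscr{F}$ is a strict fundamental domain for $V_P(S_O)$, this stabilizer is exactly $\{\nu_0,\ldots,\nu_7\}\cong D_4$. The condition $E'\nu'=E_0$ then reads, via $E_i=E_0\nu_i$, as $E_0(\nu_i g'\nu')=E_0$. Now $D_4$ acts simply transitively on the eight sides $\{E_0,\ldots,E_7\}$: transitively because $E_s=E_0\nu_s$ for each $s$, and hence freely by orbit--stabilizer since $|D_4|=8$. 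Therefore $\nu_i g'\nu'=\mathrm{Id}$, giving $g'\nu'=\nu_i^{-1}$.

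It then remains to compute the normalized label of the exit side. Applying $\nu'$ to $E''$,
\[
E''\nu' \;=\; E_j g'\nu' \;=\; E_j\nu_i^{-1} \;=\; E_0\,\nu_j\nu_i^{-1}.
\]
By simple transitivity there is a unique $k\in\{0,\ldots,7\}$ with $\nu_j\nu_i^{-1}=\nu_k$ in $D_4$, and for this $k$ we have $E''\nu'=E_k$, so by the defining recipe of the normalized cutting sequence $\bar c(g)_l=k$. Finally $k\neq 0$, because $k=0$ would force $\nu_j=\nu_i$ and hence $i=j$, contradicting the fact that consecutive entries of $c(g)$ differ (a geodesic cannot enter and exit a tile through the same side).

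The only nontrivial ingredient in this argument is the simple transitivity of $D_4=\mathrm{Stab}_{V_P(S_O)}(\mathscr{O})$ on the sides of $\mathscr{O}$, which itself follows from the Poincar\'e polyhedron theorem applied to the strict fundamental domain $\mathscr{F}$; once this is established, the rest is the short algebraic manipulation above.
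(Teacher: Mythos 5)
Your proof is correct. The final computation is the same as the paper's --- everything reduces to $E''\nu' = E_j\nu_i^{-1} = E_0\,\nu_j\nu_i^{-1} = E_k$ --- but you pin down the normalizing element $\nu'$ by a different route. The paper works concretely with the chain of tiles: it writes the three consecutive octagons as $\mathscr{O}\nu$, $\mathscr{O}\gamma_i\nu$, $\mathscr{O}\gamma_j\gamma_i\nu$ for $\nu\in V_{\mathscr{O}}$, exhibits $\nu'=\nu_i^{-1}\nu^{-1}\gamma_i^{-1}$ explicitly, and checks by tracking the images of the three tiles that it sends the entering side to $E_0$ and the exiting side to $E_j\nu_i^{-1}$. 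You instead never look at the neighbouring tiles: you record only that the entering side of $\mathscr{O}'=\mathscr{O}g'$ carries label $i$, hence equals $E_ig'$, and then force $g'\nu'=\nu_i^{-1}$ abstractly from the fact that $\mathrm{Stab}_{V_P(S_O)}(\mathscr{O})=\{\nu_0,\dots,\nu_7\}\cong D_4$ acts simply transitively on the eight sides. This is a clean argument and has the side benefit of re-deriving the uniqueness of $\nu'$ (which the paper attributes separately to the Poincar\'e polyhedron theorem and strictness of the fundamental domain); the paper's version, by contrast, makes visible how $\nu'$ is built as a word in the reflections $\gamma_s$, which is the form used later in the renormalization scheme. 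Your observation that $k\neq 0$ because consecutive entries of $c(g)$ differ is a worthwhile addition that the paper leaves implicit.
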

\begin{proof}
Since ${c(g)}_{l-1} = i  $ and  $c(g)_{l} = j  $, we know that $g$ crosses three consecutive tiles $\mathscr{O}',  \mathscr{O}'',\mathscr{O}''$ where  for some $\nu \in V_{\mathscr{O}}$ we have $\mathscr{O}'=  \mathscr{O}\nu$, $\mathscr{O}''=  \mathscr{O}\gamma_i\nu $ and  $\mathscr{O}'''=  \mathscr{O}\gamma_j\gamma_i \nu $. Let $E'$ be the common side of $\mathscr{O}'$ and  $\mathscr{O}''$ and $E''$ be the common side of $\mathscr{O}''$ and  $\mathscr{O}'''$.  Let us act on the right by $ \nu^{-1} \gamma_i^{-1} \in V_{\mathscr{O}}$, which maps $\mathscr{O}''$ to $\mathscr{O}$. 
Since $g$ crosses in order $ \mathscr{O}'$, $\mathscr{O}''$ and $\mathscr{O}'''$, which are mapped by  $\nu^{-1}\gamma_i^{-1} $ to $\mathscr{O}$, $\mathscr{O}\gamma_i^{-1}$ and $\mathscr{O}\gamma_j$ respectively,  we know that $  g \nu^{-1}\gamma_i^{-1}$ enters $\mathscr{O}$ through the side shared with $\mathscr{O}\gamma_i^{-1} $, which is $E_i$, and exit through the side $E_j$ shared with $ \mathscr{O}\gamma_j$. Thus, if we now act on the right by $\nu_i^{-1}$ which maps $E_i$  to $E_0$, we see that globally the right action of the element $ \nu_i ^{-1}\nu^{-1} \gamma_i^{-1} $ maps $\mathscr{O}''$ to $\mathscr{O}$, the entering side $E'$ to $E_0$ and the exiting side $E''$ to $E_j \nu_i ^{-1}$. The equation $\nu_j \nu_i ^{-1}  = \nu_k$ and (\ref{sidesrightaction}) imply that $ E_j \nu_i^{-1} = E_0 \nu_j \nu_i ^{-1} =E_0 \nu_k = E_k$. By definition of the normalized coding, this gives $\bar c(g)_{l} = k  $. 

\end{proof}

If we have a geodesics ray $r_{\theta}$, the first symbol $\bar c(r_{\theta})_0$ of $\bar c(r_{\theta})$ is not defined by the rule we have described since the initial tile is not crossed completely: we set by convention $\bar c(r_{\theta})_0 =  k \in  \{0, 1, \dots, 7\}$ iff  $r_{\theta}$ leaves $\mathscr{O} $ through the side $E_k$. The following elements of  $\bar c(r_{\theta})$ are associated to exiting sides of tiles crossed by $r_{\theta}$ and belong to  $\{ 1, \dots, 7\}$.  Thus, the $\bar c(r_{\theta})$ is a one-sided sequence, finite if $\theta$ is cuspidal, infinite otherwise.  
Using again the fact that edges of the tree $\mathscr{T}$ are dual to sides of ideal octagons, this labeling of sides induces a labeling of the tree, which is shown in Figure \ref{treelabeling}.  
By construction this labelling has  the property that the normalized cutting sequence $\bar c(r_{\theta})$ is given by the sequence of labels associated to vertexes of the combinatorial geodesics $p_\theta$ approximating $r_{\theta}$. 

One can also give an intrinsic description of the labeling of the edges of the tree, without passing through Teichm\"uller cutting sequences, but using the tree structure. 
 Let us define the root of our tree to be the center of the disk $\mathbb{D}$ and say
 that the \emph{level $n$}  is composed by all vertexes which have distance $n$ from the root, where the distance here is the natural  distance on a graph which gives distance $1$ to vertices which are connected by an edge. For $n\ge 1$ we call \emph{edges of level $n$} the edges which connect a vertex of level $n-1$ with a vertex of level $n$. 
Consider the elements $ \nu_k^{-1}\gamma $, $k= 1,\dots, 7$ and consider their right action on the embedded copy of $\mathscr{T}$. One can check the following. Let $e_k$ be the edge of level $1$ which is perpendicular to the side $E_k$ of $ \mathscr{O}$ and let $ \mathscr{O}_k$ be the ideal octagon tile obtained reflecting $\mathscr{O}$ at $E_k$.
\begin{rem}\label{actionontree}
The right action of the element $ \nu_k^{-1}\gamma  $ gives a tree automorphism of $\mathscr{T}$, 
 which sends the center  of the octagon $\mathscr{O}_k$ to the root $\underline{0}$ and the oriented edge from $\underline{0}$ to the center of $\mathscr{O}_k$ to the oriented edge from the center of $\mathscr{O}_0$ to $\underline{0}$. Moreover $ \nu_k^{-1}\gamma $ maps  all the edges of level $n$ which branch out of $e_k$  to edges of level $n-1$.
\end{rem}
This observation can be used to define  a labeling of the tree edges by induction on the \emph{level} of the edges. 
Let us label by $\{0, \dots, 7\}$  the edges of level $1$ of the tree, the edge $e_k $ perpendicular to $E_k$ being labeled by $k$. If $e$ is an edge  of level $2$ branching out of $e_k$, 
 applying $ \nu_k^{-1}\gamma  $ to the right, by Lemma \ref{actionontree}, it is mapped to an edge of level $1$; let its label be $j$ iff this edge is $e_j$. We remark that $j \in \{ 1, \dots, 7\}$. Indeed the edge $e_0$ is the image by  of a level-$1$ edge $e_k$ hence the label $0$ is not assumed by any level $2$ edge. Since by Lemma \ref{actionontree} all of edges of level $2$ are mapped to edges of level $1$ by one of the renormalization moves, this allows us to iterate the same procedure to define  labels of level $3$, and higher by induction. See Figure \ref{treelabeling}. 
One can check by induction that the following holds. 
\begin{lemma}\label{actionontreepath}
Given a finite path on the tree from the root to the  center of an ideal octagon $\mathscr{O}'$, if the labels of its edges are in order $s_0, s_1, \dots, s_n$, where $s_i$ is the label of the edge of level $i$, then the  element $ \nu_{s_0}^{-1}\gamma \cdots\nu_{s_n}^{-1}  \gamma $ 
 acts on the right by mapping  the last edge $e'$ to $e_0$ and the octagon $\mathscr{O}'$ to $\mathscr{O}$.
\end{lemma}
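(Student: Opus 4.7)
The plan is to proceed by induction on the length $n$ of the path, with Remark \ref{actionontree} (the tree-automorphism property of $\nu_k^{-1}\gamma$) as the engine driving both the base case and the inductive step, and the inductive definition of the edge labeling ensuring that each renormalization move peels off exactly one level and shifts the label sequence by one.

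For the base case $n=0$, the path consists of the single level-$1$ edge $e_{s_0}$ going from $\underline{0}$ to the center of $\mathscr{O}_{s_0}$, so $e'=e_{s_0}$ and $\mathscr{O}'=\mathscr{O}_{s_0}$. The base case then follows directly from Remark \ref{actionontree}: the right action of $\nu_{s_0}^{-1}\gamma$ sends the center of $\mathscr{O}_{s_0}$ to $\underline{0}$ (hence $\mathscr{O}_{s_0}$ to $\mathscr{O}$) and sends the oriented edge $e_{s_0}$ from $\underline{0}$ to that center onto the oriented edge from the center of $\mathscr{O}_0$ to $\underline{0}$, which as an unoriented edge is exactly $e_0$.

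For the inductive step, assume the statement for paths of length $n$. Given a path of length $n+1$ with label sequence $s_0,s_1,\dots,s_n$, its first edge is $e_{s_0}$. Apply $\nu_{s_0}^{-1}\gamma$ on the right: by Remark \ref{actionontree} this is a tree automorphism sending $e_{s_0}$ to $e_0$, the center of $\mathscr{O}_{s_0}$ to $\underline{0}$, and each level-$m$ edge branching out of $e_{s_0}$ to a level-$(m-1)$ edge. Thus the tail subpath from the center of $\mathscr{O}_{s_0}$ to the center of $\mathscr{O}'$, which has length $n$, is carried to a path of length $n$ starting at $\underline{0}$. The crucial point is that the label sequence of this new path is exactly $s_1,s_2,\dots,s_n$: this is precisely how the labeling at higher levels is inductively defined in the paragraph preceding the lemma, since the label $s_j$ of an edge is by construction the label its image carries after one application of $\nu_{s_0}^{-1}\gamma$.

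We may therefore apply the inductive hypothesis to this length-$n$ image path: the element $\nu_{s_1}^{-1}\gamma\cdots\nu_{s_n}^{-1}\gamma$ sends the image of $e'$ to $e_0$ and the image of $\mathscr{O}'$ to $\mathscr{O}$. Composing with the first factor, and using associativity of the right action, the element $\nu_{s_0}^{-1}\gamma\,\nu_{s_1}^{-1}\gamma\cdots\nu_{s_n}^{-1}\gamma$ sends $e'$ to $e_0$ and $\mathscr{O}'$ to $\mathscr{O}$, as required. The main subtlety, and the step that needs to be argued most carefully, is the identification of the labels of the transformed subpath with $s_1,\dots,s_n$; this really is just a rereading of the inductive labeling procedure, but writing it out precisely is what makes the argument rigorous rather than circular.
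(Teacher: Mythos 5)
Your proof is correct and is exactly the induction on the path length that the paper has in mind: the paper gives no written argument beyond the assertion ``one can check by induction,'' and your write-up supplies precisely that induction, with the base case and the level-shifting step both resting on Remark \ref{actionontree} and the label of the transformed subpath identified via the inductive definition of the edge labeling. No gaps; the point you flag as needing care (that the image of the tail path carries labels $s_1,\dots,s_n$) is indeed the crux, and your justification of it from the labeling construction is the right one.
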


\paragraph{The first renormalization move.}
The renormalization schemes in \S \ref{renormschemessec} are expressions of the following renormalization scheme on paths on the tree $\mathscr{T}$ (or combinatorial geodesics). 
Given a direction $\theta$, let   $p_{\theta}$ be the combinatorial geodesics approximating $r_\theta$. Let us assume for now that $\theta$ is not a cuspidal direction. By  Remark \ref{uthetacorrespondence}, $\theta \in \Sigma_k$ iff $r_\theta$ crosses the side $E_k$ of $\mathscr{O}$ or equivalently iff 
 the first label of  $\bar c(r_{\theta})$ is $k$. 
Thus, if  the vertexes of the combinatorial geodesics $p_{\theta}$ are labeled by $s_0, s_1, \dots$, let us act on the right by the renormalization element $ \nu_{s_0}^{-1} \gamma $.  The right action of  $\nu_{s_0}^{-1}\gamma  $  on $\Disk$ sends  $p_{\theta}$ to a new combinatorial geodesics, which starts from a vertex of level $1$ and passes through $\underline{0}$. If we neglect the first edge, this new combinatorial geodesics $p'$ has vertexes labeled by  $s_1, s_2,  \dots $. Thus, at the level of combinatorial geodesics labelings, this renormalization act as a shift. 
Let us consider the limit point in $\partial \Disk$  of $p'$. 
\begin{lemma}\label{limitpoint}
The limit point of the combinatorial geodesics $p'$ in $\partial \Disk$  is the endpoint of the Teich\-m\"uller ray $r_{\theta'}$, where $\theta'=F(\theta)$ and $F$ is the Farey map. 
\end{lemma}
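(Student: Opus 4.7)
The plan is to reduce the statement to a boundary computation and identify that computation with the Farey map $F$ via Remark \ref{uthetacorrespondence}.

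\textbf{First,} I would establish that the combinatorial geodesic $p_\theta$ has the same limit point in $\partial\mathbb{D}$ as the Teichm\"uller ray $r_\theta$, namely the point $x(\theta)=-1/\cot\theta$ of Remark \ref{uthetacorrespondence}. By construction, the $k$-th vertex of $p_\theta$ is the center of the $k$-th ideal octagon $\mathscr{O}^{(k)}$ crossed by $r_\theta$; both $p_\theta$ and $r_\theta$ leave every compact subset of $\mathbb{D}$ since the sequence of tiles crossed is infinite (we are in the non-cuspidal case) and no tile is revisited. The tiles $\mathscr{O}^{(k)}$ shrink in Euclidean boundary diameter to a single point of $\partial\mathbb{D}$, which must equal the endpoint of $r_\theta$, so the centers of the $\mathscr{O}^{(k)}$ converge to that same point.

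\textbf{Second,} I would invoke the fact that the right action of $M:=\nu_{s_0}^{-1}\gamma\in V_P(S_O)$ on $\mathbb{D}$ is a hyperbolic isometry, hence extends continuously to $\overline{\mathbb{D}}$ and preserves both the tessellation by ideal octagons and the embedded dual tree $\mathscr{T}$. Consequently $p'=p_\theta\cdot M$ is again a path in $\mathscr{T}$, and its boundary limit is the image $x(\theta)\cdot M$ of the limit point of $p_\theta$.

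\textbf{Third,} I would carry out the explicit boundary computation. For $\nu=\left(\begin{smallmatrix}a & b\\ c & d\end{smallmatrix}\right)$, the right action on the boundary coordinate $x=x_1/x_2$ of $\partial\mathbb{H}$ induced by the chart $\phi_1$ is $x\mapsto(ax+c)/(bx+d)$, as recorded in \S\ref{discsec}. Since $\gamma^2=\mathrm{Id}$ by Lemma \ref{Veechprop}, we have $M^{-1}=\gamma\nu_{s_0}$, which is precisely the matrix whose linear fractional action on the inverse-slope coordinate $u=\cot\theta$ defines the branch $F|_{\overline{\Sigma}_{s_0}}$. Substituting $x(\theta)=-1/u$ into the right-action formula, a short matrix computation yields
\[
x(\theta)\cdot M \;=\; -\frac{1}{M^{-1}[u]} \;=\; -\frac{1}{F(u)} \;=\; -\frac{1}{\cot(F(\theta))} \;=\; x(F(\theta)),
\]
which is the desired identity.

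\textbf{Main obstacle.} The difficulty is purely notational: the Farey map $F$ is defined through the \emph{left} action of $\gamma\nu_{s_0}$ on inverse slopes $u$, while the renormalization of $p_\theta$ comes from the \emph{right} action of $\nu_{s_0}^{-1}\gamma$ on $\mathbb{D}$, and the boundary coordinate $x$ and the inverse slope $u$ are related by $x=-1/u$. The algebraic identity $R_M[-1/u]=-1/M^{-1}[u]$ intertwines these two actions, and together with the involution $\gamma=\gamma^{-1}$ it forces the two formulas to agree. No analytic input beyond continuity of M\"obius maps on $\overline{\mathbb{D}}$ is required.
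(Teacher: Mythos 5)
Your proposal is correct and follows essentially the same route as the paper's proof: both observe that $p_\theta$ and $r_\theta$ share a limit point, extend the right action of $\nu_{s_0}^{-1}\gamma$ continuously to $\partial\mathbb{D}$, and then verify via Remark \ref{uthetacorrespondence} that the induced boundary map in the coordinate $x=-1/u$ is exactly the branch of $F$ on $\overline{\Sigma}_{s_0}$ (the paper does this by writing out the coefficients $a_i,b_i,c_i,d_i$ and recognizing the inverse matrix $\gamma\nu_i$, you by the equivalent identity $R_M[-1/u]=-1/M^{-1}[u]$). The only quibbles are cosmetic: the involution $\gamma^2=\mathrm{Id}$ is stated after equation (\ref{gammadef}) rather than in Lemma \ref{Veechprop}, and your justification that the tile centers converge to the endpoint of $r_\theta$ is slightly informal, but no less so than the paper's own ``by construction.''
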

\begin{proof}
Let us assume that $\theta \in \Sigma_i$. Then $p'$ is obtained by acting on the right on $p$ by $\nu_{i}^{-1} \gamma $. Since $p$ by construction has the same limit point than $r_{\theta}$ and the action of $\nu_{i}^{-1} \gamma$ extends by continuity to $\partial \mathbb{D}$, the limit point of $p'$ is obtained by acting on the right on the limit point of $r_{\theta}$. Let us identify  $\partial \mathbb{D}$ with $\mathbb{R}$ as in \S\ref{discsec} and let $\bar x\in \mathbb{R}$ be the endpoint. Its image $\bar x'$ by the right action of $\nu_{i}^{-1} \gamma = \left( \begin{smallmatrix} a_i & b_i \\ c_i & d_i  \end{smallmatrix} \right)$ is  $\bar x' = \frac{a_i \bar x  + c_i}{ b_i \bar x  +d_i}$. By Remark \ref{uthetacorrespondence}, this is the endpoint of the ray  $r_{\theta'}$ where $\cot \theta ' = -1/(\bar x ') = - \frac{ b_i \bar x  + d_i}{a_i \bar x  + c_i}  $ and since $\bar x = 1 / \cot \theta$ again by Remark \ref{uthetacorrespondence}, we get  $\cot \theta ' =   \frac{  d_i  \cot \theta - b_i   }{  -  c_i \cot \theta+ a_i}  $. This is exactly the left action by linear fractional transformation of the inverse $\left( \begin{smallmatrix} d_i& - b_i \\ -c_i & a_i   \end{smallmatrix} \right) = \gamma \nu_i $. This shows exactly that $\theta ' = F(\theta)$, by definition of the octagon Farey map (see  \S\ref{Fareymapsec}). 
\end{proof}
\noindent Iterating the renormalization move on  the combinatorial geodesics  and Lemma \ref{limitpoint} one can prove the following. 
\begin{prop}\label{CFandcuttseq}
Let  $\theta = [s_0; s_1, \dots, s_n, \dots ] $. The entries $\{s_k\}_{k\in \mathbb{N}}$ of the octagon additive continued fraction give the normalized  cutting sequence $\bar c(r_\theta)$ of the Teich\-m\"uller geodesics  ray $r_{\theta}$.   
\end{prop}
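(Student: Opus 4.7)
I would prove this by induction on the index $k$, using the renormalization move $\nu_{s_0}^{-1}\gamma$ as the inductive step and invoking both Lemma \ref{limitpoint} and the Veech-invariance of the normalized coding.

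For the \emph{base case} $k=0$, I would argue that $\bar c(r_\theta)_0 = s_0$ directly from the definitions. By the convention introduced for the initial symbol of a ray, $\bar c(r_\theta)_0 = k$ iff $r_\theta$ exits the fundamental ideal octagon $\mathscr{O}$ through side $E_k$. By Remark \ref{uthetacorrespondence}, this happens iff $\theta \in \Sigma_k$. On the other hand, the octagon additive continued fraction expansion is defined so that $s_0$ is the unique index with $\theta \in \overline{\Sigma}_{s_0}$ (and by the definition of the expansion, bona fide non-cuspidal directions land in a unique sector). Thus $\bar c(r_\theta)_0 = s_0$.

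For the \emph{inductive step}, assuming the claim for rays in every direction, I would apply the renormalization move $\nu_{s_0}^{-1}\gamma$ to the combinatorial geodesic $p_\theta$. As explained in the paragraph on ``The first renormalization move'', the right action of $\nu_{s_0}^{-1}\gamma$ maps $p_\theta$ to a combinatorial geodesic whose labels are the tail $s_1, s_2, \dots$ of the sequence of labels of $p_\theta$. By Lemma \ref{limitpoint}, the image geodesic ray limits to the endpoint of $r_{F(\theta)}$. Since the normalized cutting sequence is by construction invariant under the right action of $V_P(S_O)$, and $\nu_{s_0}^{-1}\gamma \in V_P(S_O)$, the sequence of labels of the image path coincides with the normalized cutting sequence $\bar c(r_{F(\theta)})$ (up to the initial symbol, which we handle below). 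Combining: for $k \geq 1$,
\[
\bar c(r_\theta)_k \;=\; \bar c(r_{F(\theta)})_{k-1}.
\]
By the inductive hypothesis applied to $F(\theta) = [s_1;s_2,\dots]_O$, the right-hand side equals $s_k$, which finishes the induction.

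The only subtle point, and the place where I expect some care will be needed, is reconciling the \emph{initial-symbol convention} (where $\bar c(r_\theta)_0 \in \{0,\dots,7\}$ records the side $E_{s_0}$ through which the ray exits $\mathscr{O}$) with the \emph{generic convention} used at later steps (where labels lie in $\{1,\dots,7\}$ and come from the labelling of the tree edges via Lemma \ref{actionontreepath}). The reconciliation is precisely that, after applying $\nu_{s_0}^{-1}\gamma$, the first tile crossed is again $\mathscr{O}$, so the next label records the side through which the shifted ray exits $\mathscr{O}$ — this is exactly the base case applied to $F(\theta)$. Finally, the case of a cuspidal direction $\theta$ would be handled separately (yielding a finite expansion and a finite $\bar c(r_\theta)$), but the same inductive argument applies term by term until the ray terminates, and in fact this case provides the promised proof of the footnote after the definition of cuspidal directions: $\theta$ is cuspidal iff the expansion is finite iff the corresponding linear flow on $S_O$ decomposes into periodic cylinders.
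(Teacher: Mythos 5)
Your proof is correct and follows essentially the same route as the paper, which disposes of the proposition in one sentence ("iterating the renormalization move on the combinatorial geodesics and Lemma \ref{limitpoint}"): base case from Remark \ref{uthetacorrespondence}, then induction via the shift action of $\nu_{s_0}^{-1}\gamma$ on labels combined with Lemma \ref{limitpoint}. You have merely made explicit the inductive bookkeeping (including the initial-symbol convention and the cuspidal case) that the paper leaves to the reader.
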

\noindent Proposition \ref{CFandcuttseq} actually holds for all $\theta$ 
 if we adopt a different convention for the combinatorial geodesics associated to a cuspidal direction. Instead of associating to a cuspidal direction $\theta$ a finite combinatorial geodesics $r_{\theta}$ made of $n$  edges  labeled in order by $s_1, \dots, s_n \in \{1, \dots, 7\}$, we can associate to $\theta$ two infinite paths. If $s_n $ is even (respectively odd), we associate the infinite path which after the $n^{th}$ edge goes only though edges labeled by $1$ (respectively $7$) and the path which shares the first $n-1$ edges, then goes through  the edge labeled by $s_{n}+1 $ (modulo $7$) and then only through edges labeled by $1$ (respectively $7$)\footnote{These two paths corresponds exactly to the two possible additive octagon continued fraction expansions of a terminating direction, described in Lemma 2.2.17 in \cite{SU:sym}.}. One can see in Figure \ref{treelabeling} that these two infinite paths both limit to the same point on $\partial \mathbb{D}$. 
With this convention Proposition \ref{CFandcuttseq} holds for all $\theta$ and shows that cuspidal directions on the Teichm\"uller disk are in one-to-one correspondence with terminating directions, i.~e.~directions whose octagon additive continued fraction expansion is eventually 1 or 7.   Moreover, cuspidal directions correspond exactly to directions $\theta$ such that all trajectories in direction $\theta$ are periodic sequences (see Proposition 2.3.2 in \cite{SU:sym}). 

As a consequence of Proposition \ref{CFandcuttseq}, we have the following strengthening of Lemma \ref{limitpoint}.
\begin{cor}
The new combinatorial geodesics $p'$  is the combinatorial geodesic corresponding to the Teichm\"uller ray $r_{\theta'}$, where $\theta'=F(\theta)$. 
\end{cor}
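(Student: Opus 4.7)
The plan is to combine Lemma \ref{limitpoint} with Proposition \ref{CFandcuttseq} and use the fact that, in the tree $\mathscr{T}$, a combinatorial geodesic starting at the root $\underline{0}$ is uniquely determined either by its endpoint on $\partial\mathbb{D}$ or by its sequence of vertex labels.

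First I would write down explicitly what the labels of $p'$ are. By construction, if $\theta = [s_0; s_1, s_2, \ldots]_O$ then the combinatorial geodesic $p_\theta$ has vertex labels $s_0, s_1, s_2, \ldots$, and after acting on the right by the renormalization element $\nu_{s_0}^{-1} \gamma$ and discarding the first edge, the resulting path $p'$ emanates from $\underline{0}$ with vertex labels $s_1, s_2, \ldots$. On the other hand, by Lemma \ref{limitpoint}, the limit point of $p'$ on $\partial\mathbb{D}$ is exactly the endpoint of the Teichm\"uller ray $r_{\theta'}$ with $\theta'=F(\theta)$.

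Next I would compare $p'$ with $p_{\theta'}$. By the very definition of the Farey map, $\theta' = F(\theta) = [s_1; s_2, s_3, \ldots]_O$, so by Proposition \ref{CFandcuttseq} the combinatorial geodesic $p_{\theta'}$ also starts at $\underline{0}$ and has vertex labels $s_1, s_2, \ldots$. Since at each vertex of $\mathscr{T}$ the eight incident edges carry distinct labels (by the labeling of \S\ref{Teichcuttseqsec}), a path starting at $\underline{0}$ is uniquely determined by its sequence of vertex labels. Equivalently, one may invoke Remark \ref{tree}: $\mathscr{T}$ is a tree, so the path in $\mathscr{T}$ from $\underline{0}$ converging to a given boundary point is unique, and we have already shown that $p'$ and $p_{\theta'}$ share both a starting vertex and a limit point.

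The only case requiring care is when $\theta$ is a cuspidal direction, but the extended convention for combinatorial geodesics in cuspidal directions (the two infinite paths limiting to the same boundary point) is preserved verbatim under the shift and under $F$, so the same identification of vertex label sequences applies. No step looks like a real obstacle; the content of the corollary is essentially that the shift on the CF expansion, the renormalization move on combinatorial geodesics, and the Farey map on directions are three incarnations of the same operation, which is exactly what Proposition \ref{CFandcuttseq} and Lemma \ref{limitpoint} together encode.
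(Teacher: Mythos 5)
Your argument is correct and is essentially the paper's own: the paper states this corollary as a direct consequence of Proposition \ref{CFandcuttseq} (combined with Lemma \ref{limitpoint} and the shift action of the renormalization move on labels), which is exactly the route you take. Your additional observations --- that a non-backtracking path from the root of the tree $\mathscr{T}$ is determined by its label sequence, and that the cuspidal-direction convention is compatible with the shift --- just make explicit the details the paper leaves implicit.
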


\paragraph{Normalized  cutting sequences and combinatorial renormalization.}
Let  $w$ be the cutting sequence of a trajectory $\tau$ in direction $\theta$ and assume that $w$ is non-periodic. As in \S\ref{Teichcuttseqsec} we made a connection between the Teichm\"uller cutting sequence of the ray $r_\theta$ and the derived sequences $w^{(k)}$, here we use the \emph{normalized} cutting sequence $\bar c(r_\theta)$ to give a geometric interpretation of the cutting sequences $w_k$, obtained by normalizing and deriving by (\ref{renormalizedseq}), according to the combinatorial renormalization algorithm described in \S\ref{combrenormsec}. 

If the labels of the edges of the combinatorial geodesics $p_{\theta}$ approximating the ray $r_{\theta}$ are $s_0, s_1, \dots$, let $\nu(k) : = \nu_{s_0 }^{-1}\gamma \nu_{s_{1}}^{-1} \gamma\dots \nu_{s_{k-1} }^{-1}\gamma$ be the element of $V(S_O)$ which maps the $k^{th}$ edge of $p_{\theta}$ to $e_0$ and the $k^{th}$ octagon tile entered by $r_{\theta}$ to $\mathscr{O}$ (see Lemma \ref{actionontreepath}). 
Since $\nu(k) \in V(S_O)$, there exists an affine automorphism $\Psi_{\nu(k)}: S_O \rightarrow S_O$ whose linear part is   $\nu(k)$ and such an element is unique (by  Lemma \ref{Veechprop} and Remark \ref{trivialkernel}).  
Let us consider the image $\Psi_{\nu(k)} O \subset S_O$ of the standard octagon $O \subset S_O$. Let us recall that $\Psi_{\nu(k)} O $ inherits a labeling by  $\{ A,B, C, D\}$ induced by $\Psi_{\nu(k)}$, the image of a side of  $O$ being labeled by the same letter of the side of $O$.  
 \begin{prop}\label{wknormalized}
 The cutting sequences $w_k$ is the cutting sequence of the same trajectory $\tau$ with respect to the sides of the affine octagon $\Psi_{\nu(k)} O$ with the labeling induced by  $\Psi_{\nu(k)}$.
 \end{prop}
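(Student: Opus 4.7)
The plan is to run an induction on $k$ that closely mirrors the proof of Proposition \ref{derivedseq}, but using the renormalized trajectories $\tau_k$ from \S\ref{trajrenormsec} in place of iterated derivations, and invoking Proposition \ref{relationsrenormalizations}, which already tells us that $w_k = c(\tau_k)$.

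First I would reduce the statement to the trajectory identity $\tau_k = \Psi_{\nu(k)}^{-1}\tau$ on $S_O$. By the very definition of ``labeling induced by $\Psi_{\nu(k)}$'', the affine diffeomorphism $\Psi_{\nu(k)}$ sends the labeled sides of $O$ bijectively to the labeled sides of $\Psi_{\nu(k)} O$ preserving labels, so the cutting sequence of $\tau$ with respect to $\Psi_{\nu(k)} O$ with its induced labeling agrees with the cutting sequence of $\Psi_{\nu(k)}^{-1}\tau$ with respect to $O$ with its standard labeling. Combined with $w_k = c(\tau_k)$, the proof is reduced to showing $\tau_k = \Psi_{\nu(k)}^{-1}\tau$.

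I would prove the latter by induction on $k$. The base case $k=0$ is immediate since $\nu(0)$ is the empty product and $\tau_0 = \tau$. For the inductive step, assuming $\tau_k = \Psi_{\nu(k)}^{-1}\tau$, I would use (\ref{renormalizationtrajectories}) together with the fact that $n(\tau_k) = \Psi_{\nu_{s_k}}\tau_k$ has sector $0$ and the formula (\ref{derivedtraj}) for the derived trajectory in order to write
\begin{equation*}
\tau_{k+1} = (n(\tau_k))' = \Psi_\gamma \Psi_{\nu_{s_k}} \tau_k = \Psi_\gamma \Psi_{\nu_{s_k}} \Psi_{\nu(k)}^{-1} \tau .
\end{equation*}
By the chain rule, the composition $\Psi_\gamma \Psi_{\nu_{s_k}} \Psi_{\nu(k)}^{-1}$ is an affine automorphism of $S_O$ with linear part $\gamma \nu_{s_k} \nu(k)^{-1}$; since $\gamma$ is an involution, this equals $(\nu(k)\nu_{s_k}^{-1}\gamma)^{-1} = \nu(k+1)^{-1}$. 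Both $\Psi_\gamma \Psi_{\nu_{s_k}} \Psi_{\nu(k)}^{-1}$ and $\Psi_{\nu(k+1)}^{-1}$ are therefore affine automorphisms of $S_O$ with the same derivative, and by Lemma \ref{Veechprop} together with Remark \ref{trivialkernel} they must coincide, which gives $\tau_{k+1} = \Psi_{\nu(k+1)}^{-1}\tau$, completing the induction.

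The hard part is essentially bookkeeping: keeping straight the left vs.~right conventions (the Veech group acts on $\Disk$ on the right through its projective image, but on trajectories on $S_O$ on the left via the associated affine automorphisms), handling the inverses of the involution $\gamma$, and verifying that the definition of $\nu(k)$ coming from Lemma \ref{actionontreepath} composes in the correct order to match the recursion (\ref{renormalizationtrajectories}). Once these conventions are aligned, the argument is essentially formal, the crucial point being the triviality of the kernel of the Veech homomorphism established in Lemma \ref{Veechprop}, which is what upgrades the equality of derivatives into equality of affine automorphisms and hence of trajectories.
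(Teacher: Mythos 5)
Your proposal is correct and follows essentially the same route as the paper's own proof: both reduce the statement to the identity $\tau_k = \Psi_{\nu(k)}^{-1}\tau$ via Proposition \ref{relationsrenormalizations} and then establish that identity by induction using $n(\tau_k) = \nu_{s_k}\trajcdot\tau_k$ and the formula $n(\tau_k)' = \Psi_\gamma\, n(\tau_k)$ from (\ref{derivedtraj}). Your version merely makes explicit the step (left implicit in the paper) where the triviality of the kernel of the Veech homomorphism identifies $\Psi_\gamma\Psi_{\nu_{s_k}}\Psi_{\nu(k)}^{-1}$ with $\Psi_{\nu(k+1)}^{-1}$.
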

\noindent The proof is given below. Since $\Psi_{\nu(k)}$ can be written as $\Psi_{\nu(k)} = \Upsilon_{k} \Phi_{\nu(k)}$,
 where $ \Phi_{\nu(k)} : S_O \rightarrow  {\gamma^{(k)} \cdot S_O } $ and $D \Upsilon_{k} = Id$ is uniquely defined,  $\Psi_{\nu(k)} O$ is the image of the affine octagon $O^{(k)}: =\nu(k)  O$ under the cut and paste map given by $\Upsilon_{k}$. The affine octagons $O^{(k) } \subset \mathbb{R}^2$ are more and more  stretched in the direction $\theta$,  meaning that the directions of the sides of $O_k$ tend to $\theta$ as $k$ increases and this show geometrically that  recoding $\tau $ with respect to these new sides provides a more efficient coding. 
 Let us remark that $O_k$ and the affine octagons $O^{(k)}$ in \S\ref{Teichcuttseqsec} are indeed the same affine octagons in  $\mathbb{R}^2$, 
but the labeling of the sides induced by $\Psi_{\nu(k)}$ and $\Psi_{\gamma^{(k)}}$ respectively are different. Correspondingly, $w_k$ (the $k^{th}$ derived and renormalized sequence in the combinatorial renormalization scheme) and $w^{(k)}$ (the $k^{th}$ derived sequence) differ only by a permutation of the letters. 
\begin{proofof}{Proposition}{wknormalized}
By Proposition \ref{relationsrenormalizations} we know that $w_k$ is the cutting sequence of the trajectory $\tau_k$ defined in \S\ref{renormschemessec}. So, applying $\Psi_{\nu(k)}$, which is an affine automorphism,  $w_k$ is also the cutting sequence of $ \Psi_{\nu(k)} \tau_k$ with respect to $\Psi_{\nu(k)} O $. It suffices to show that $ \Psi_{\nu(k)} \tau_k = \tau$. Equivalently, it is enough to prove that  $\tau_k = \Psi_{\nu(k)}^{-1} \tau$. This follows by induction after recalling that $\tau_k = n(\tau_{k-1})'$ (see \S\ref{renormschemessec}), using that, since the direction on $n(\tau_{k-1})$ is in $\Sigma_0$  we have $n(\tau_{k-1})' = \Psi_{\gamma} n(\tau_{k-1})$ by (\ref{derivedtraj})  and using  that $n(\tau_k) = \nu_{s_{k-1}} \tau_{k-1}$ since the direction of $\tau_{k-1}$ belongs to  sector $s_{k-1}$ by  Proposition \ref{relationsrenormalizations}.  
\end{proofof}

\section{Natural extension and invariant measure for $F$}\label{natextsec}

\subsection{Natural extension of the octagon Farey map}
Recall that we  denote by $\bar \Sigma:= \bar \Sigma_1 \cup \dots \cup \bar \Sigma_7$ the sector of $\RP$ given in angle coordinates by  $[\pi/8, \pi]$. In this section we will consider the restriction of the octagon Farey map $F$ to $\Sigma$, which is the invariant set of $F$. 
 Let us define a map $\hat{F}$ on $\bar \Sigma \times \bar \Sigma_0$. We will see later that this map realizes the natural extension of $F|_{\Sigma}$ (see Proposition \ref{natext}). The geometric intuition behind this definition will be descriged in the next \S\ref{crosssec} where we connect it to the geodesic flow.  If $u$ and $v$ denote inverse slope coordinates and $(u,v) \in \Sigma \times \Sigma_0$, let us set:
\begin{equation}\label{natextdef}
 \hat{F}( u,v )  :=  ( \gamma \nu_i [u ], \gamma \nu_i [v] ) \quad \mathrm{if} \,  u \in \Sigma_i, \quad i=1, \dots, 7.
\end{equation}
We remark  here that the action by fractional linear transformations is the same on both coordinates, but it is  only determined by the sector to which $u$ belongs. Thus, the map consist of seven branches $\hat{F}_i$ each defined on $\Sigma_i \times \Sigma_0$, $i=1, \dots, 7$. 
Let us remark that $\hat{F}$ is constructed so that if $\pi(u,v)=u$ is the projection on the first coordinate, we have 
\be\label{projectionF}
\pi \hat{F}(u,v)  = F ( \pi (u,v)) = F(u) .
\ee
 
\begin{lemma}\label{bijectionlemma}
The map $\hat{F}$ maps  $\Sigma \times \Sigma_0$ to itself and is a bijection.
\end{lemma}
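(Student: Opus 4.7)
The argument has two parts: verifying $\hat F(\Sigma\times\Sigma_0)\subset\Sigma\times\Sigma_0$ and then that $\hat F$ is a bijection. The central observation driving both is that the dihedral group $\Die_4=\Die_8/\{\pm I\}$ acts freely and transitively on the set of eight sectors $\{\bar\Sigma_0,\ldots,\bar\Sigma_7\}$, which follows from $|\Die_4|=8$ together with the fact that $\bar\Sigma_0$ is a fundamental domain for this action. Consequently, for each $i\in\{1,\ldots,7\}$ the image $\nu_i[\bar\Sigma_0]$ is a single sector $\bar\Sigma_{j(i)}$ with $j(i)\in\{1,\ldots,7\}$ (otherwise $\nu_i$ would stabilize $\bar\Sigma_0$ and hence equal the identity), and $i\mapsto j(i)$ is a bijection of $\{1,\ldots,7\}$.

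Containment of the image is then immediate. For the first coordinate, equation (\ref{projectionF}) reduces the question to showing $F(u)\in\bar\Sigma$ for $u\in\bar\Sigma$, and this holds because each branch $F_i=\gamma\nu_i$ sends $\bar\Sigma_i$ onto $\gamma[\bar\Sigma_0]=\bar\Sigma$. For the second coordinate, using that $\gamma$ is an involution with $\gamma[\bar\Sigma_0]=\bar\Sigma$ (hence $\gamma[\bar\Sigma]=\bar\Sigma_0$), one has, for $(u,v)\in\Sigma_i\times\bar\Sigma_0$,
$\gamma\nu_i[v]\in\gamma\nu_i[\bar\Sigma_0]=\gamma[\bar\Sigma_{j(i)}]\subset\gamma[\bar\Sigma]=\bar\Sigma_0$.

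For bijectivity, I would view $\hat F$ as the piecewise map with branches $\hat F_i$ on the disjoint sets $\Sigma_i\times\bar\Sigma_0$ for $i=1,\ldots,7$. Each $\hat F_i$ is the product of two copies of the fractional linear transformation $\gamma\nu_i$ and is therefore a homeomorphism onto its image $\bar\Sigma\times\gamma[\bar\Sigma_{j(i)}]$. Because $j$ is a bijection of $\{1,\ldots,7\}$, the seven intervals $\{\gamma[\bar\Sigma_{j(i)}]\}_{i=1}^{7}$ are precisely $\{\gamma[\bar\Sigma_k]\}_{k=1}^{7}$, and these tile $\gamma[\bar\Sigma]=\bar\Sigma_0$: their interiors are disjoint since $\gamma$ is a bijection of $\RP$ and the $\bar\Sigma_k$ have disjoint interiors, and their union exhausts $\bar\Sigma_0$. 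Hence the seven images $\bar\Sigma\times\gamma[\bar\Sigma_{j(i)}]$ tile $\bar\Sigma\times\bar\Sigma_0$, yielding both injectivity (branches have essentially disjoint images) and surjectivity. Overlaps occur only on a measure-zero set of sector endpoints and can be handled by a consistent convention about which branch one assigns.

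The main obstacle is really the permutation assertion underlying everything: that $\nu_1,\ldots,\nu_7$ send $\bar\Sigma_0$ bijectively onto $\bar\Sigma_1,\ldots,\bar\Sigma_7$ in some order. Once this combinatorial fact is extracted from the free transitivity of the $\Die_4$-action, all remaining verifications are algebraic consequences of the involution property $\gamma^2=\mathrm{Id}$ and of $\gamma[\bar\Sigma_0]=\bar\Sigma$; one could alternatively (but less conceptually) compute the $j(i)$'s explicitly from the matrices in (\ref{nujdef}) and check each case by hand.
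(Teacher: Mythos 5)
Your proof is correct and follows essentially the same route as the paper's: both reduce everything to the facts that $\gamma\nu_i\Sigma_i=\gamma\Sigma_0=\Sigma$ in the $u$-coordinate and that $\nu_i\Sigma_0=\Sigma_{j(i)}$ with $i\mapsto j(i)$ a permutation of $\{1,\dots,7\}$, so that the sets $\gamma\nu_i\Sigma_0=\gamma\Sigma_{j(i)}$ tile $\Sigma_0$. The only cosmetic difference is that you derive the bijectivity of $j$ from the free transitivity of the $\Die_4$-action on sectors, while the paper identifies $j(i)$ by the relation $\nu_{j(i)}=\nu_i^{-1}$ (and records the values explicitly afterwards); both are fine.
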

\begin{proof}
It is easy to check that $\hat{F} $ is injective. 
We will prove surjectivity.  Let us remark that $ \gamma \nu_i \Sigma_i = \gamma \Sigma_0 = \Sigma$, i.e. the domain $\Sigma_i \times \Sigma_0$, of each branch $F_i$ stretches in the $u$ direction to fully cover the $u$ domain. For the $v$ coordinate, let us remark that $\nu_i \Sigma_0 = \Sigma_{j(i)}$ where $j(i) \in \{ 1, \dots, 7\}$ is such that $\nu_{j(i)}= \nu_i^{-1}$. Thus, since $\gamma \bar \Sigma = \bar \Sigma_0$, we have $\gamma \nu_i \bar \Sigma_0 = \gamma \bar \Sigma_{j(i)} \subset \bar \Sigma_0$. Moreover, since as $i$ ranges over the set $ \{ 1, \dots, 7\}$, $j(i)$ assumes all values in $\{ 1, \dots, 7\}$, $\cup_{i=1 }^{ 7} \gamma \nu_i \Sigma_0 = \cup_{i=1 }^{ 7} \gamma \Sigma_{j(i)} = \Sigma_0$. This concludes the proof that $\hat{F}$ is a bijection. 
\end{proof}
\noindent
Let us remark that $\nu_i^2 = 1$ if $i$ is odd, so that $\nu_i \Sigma_0 = \Sigma_i$ for $i=1,3,7$. Moreover  one can check that $\nu_2 \Sigma_0 = \Sigma_6$, $\nu_4 \Sigma_0 = \Sigma_4$ and  $\nu_6 \Sigma_0 = \Sigma_2$. Thus the function $j(i)$ defined in the proof above is $j(i)=i$ for all $i\neq 2,6$ and $j(2)=6$, $j(6)=2$.

In angle coordinates $(\theta, \phi) \in [\pi/8, \pi]\times [0, \pi/8]$, the action of $\hat{F}$ can be visualized as in Figure \ref{rectangles}: each rectangle in the left domain represents one of the domains $\Sigma_i \times \Sigma_0$, $i=1, \dots, 7$, while each rectangle in the right domain in Figure \ref{rectangles} represents one of the images of $F(\Sigma_i \times \Sigma_0)$, as indicated by the labels  $i=1, \dots, 7$. 

\begin{figure}
\centering
{\includegraphics[width=1\textwidth]{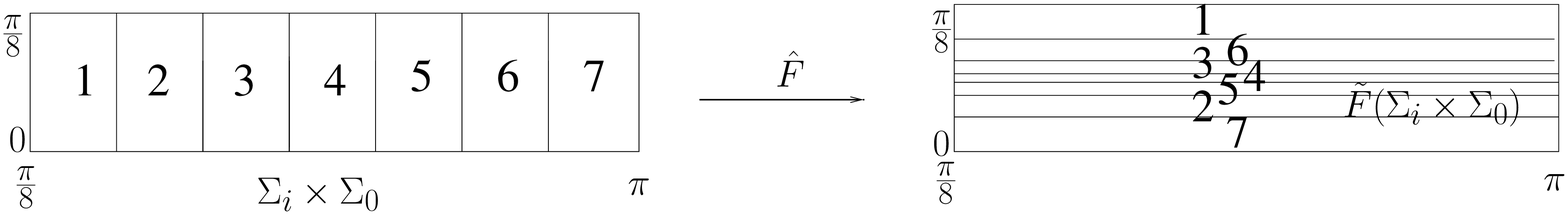}}
\caption{The action of the natural extension  $\hat{F}$ on $\Sigma \times \Sigma_0$  in angle coordinates.\label{rectangles}}
\end{figure}

\subsection{Backward octagon additive continued fraction expansion.}
From the definition of $\hat{F}$, as we already remarked, it is clear that $\hat{F}$ extends $F$ in the sense of (\ref{projectionF}).  
In order to prove that $\hat{F}$ gives a geometric realization of the natural extension  of $F|_\Sigma$, we define here explicitly the continued fraction symbolic coding of $\hat{F}$ that show that $\hat{F}$ is conjugate to a two-sided shift on seven symbols and thus is a natural extension. 
The following map $F^-$ (which is simply given by the action of the inverse of $\hat{F}$ on $v$-coordinates) defines what  might be called a \emph{backward} or \emph{dual} continued fraction expansion of $v \in \Sigma_0$.  
Let us remark that the sectors $\Sigma_i':= \gamma \nu_i\Sigma_0$ with $1\leq i \leq 7$ give a partition of $\Sigma_0$. Let us define
$ F^-(v)  =(\nu_i^{-1}  \gamma ) [v ] $ if  $ v \in  \Sigma_i'$ or, more  explicitly, computing $j(i)$ such that $\nu_{j(i)}= \nu_i^{-1}$ (see the remark after the proof of Lemma \ref{bijectionlemma}) and writing the intervals $\Sigma_i'$ in their increasing order inside $\Sigma_0$, we have:
\bes 
 F^-(v)  =  \begin{cases} (\nu_7  \gamma ) [v ] &  \text{if} \ v \in \Sigma_7'= \gamma \Sigma_7, \\
    (\nu_6  \gamma ) [v ] &  \text{if} \ v \in  \Sigma_2'=  \gamma\Sigma_6, \\ 
    (\nu_5  \gamma ) [v ] &   \text{if} \ v \in \Sigma_5'=  \gamma\Sigma_5, \\ 
    (\nu_4  \gamma  ) [v ] &  \text{if} \ v \in  \Sigma_4'= \gamma \Sigma_4, \\ 

      (\nu_3  \gamma  ) [v ] &   \text{if} \ v \in  \Sigma_3'= \gamma \Sigma_3,  \\ 
     (\nu_2  \gamma ) [v ] &  \text{if} \ v \in  \Sigma_6'= \gamma \Sigma_2, \\
    (\nu_1  \gamma  ) [v ] &   \text{if} \ v \in  \Sigma_1'= \gamma \Sigma_1 .
    \end{cases}
\ees
\noindent  The graph of $F^-$ in angle coordinates is shown in Figure \ref{backwardF}.
\begin{figure}
\centering
{\includegraphics[width=.4\textwidth]{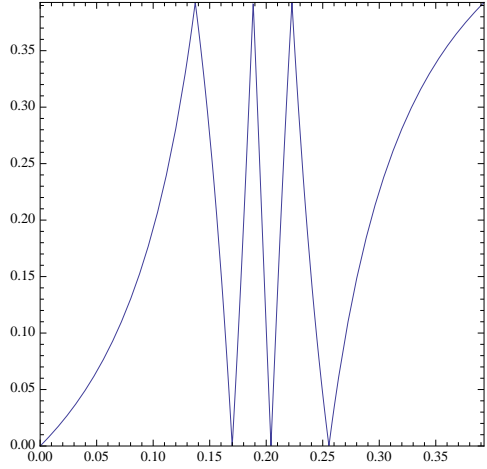}}
\caption{The  map $F^-$  in angle coordinates.\label{backwardF}}
\end{figure}
 The map $F^-$  is a continuous piecewise expanding map of $\Sigma_0$, which consists of seven branches which are given by piecewise linear transformations in the inverse of the  slope coordinates.  Thus, we can use it to define an expansion (the \emph{backward octagon additive continued fraction expansion}) as we did with the octagon Farey map $F$. We write $[v]_O^{-} = [s_{-1}, s_{-2}, \dots , s_{-k}, \dots ]$
and say that $s_{-1}$, $s_{-2}$, $\dots , s_{-k}, \dots$ are the entries of the backward additive continued fraction expansion of $v$ iff the itinerary of $v$ under  $F^-$  with respect to the partition $\{ \Sigma_i ', \, i=1, \dots, 7 \}$ is given by the sequence $\{ s_{-k}\}_{k\geq 1}$, i.e.~iff  we have $(F^-)^{k-1} v \in \Sigma_{s_{-k}}'$ for each $k\geq 1$. Equivalently, 
\bes 
[v]_O^{-} = [s_{-1}, s_{-2}, \dots , s_{-k}, \dots ]\quad \mathrm{iff} \quad v \in \cap_{k\in \mathbb{N}+} (F^-)^{-1}_{s_{-1}} (F^-)^{-1}_{s_{-2}} \cdots (F^-)^{-1}_{s_{-k}} \Sigma_0.
\ees
Given a point $ (u,v ) \in \Sigma \times \Sigma_0$,  combining the octagon additive continued fraction expansion of $u$ and the backward additive continued fraction expansion of $v$ we get the symbolic coding $\mathscr{C} :  \Sigma \times \Sigma_0 \rightarrow \{1,\dots, 7\}^{\mathbb{Z}} $
 given by 
\bes \mathscr{C} (u,v) = \{ s_i\}_{i\in \mathbb{Z}}, \quad \mathrm{where} \  u=[s_0, s_1, \dots ]_O , \quad v=[s_{-1}, s_{-2}, \dots ]^-_O . 
\ees
The following proposition shows that $\hat{F}$ is a geometric realization of the natural extension of the restriction of $F$ to its invariant set $\Sigma$. Let $\sigma$ be the full shift on $\{1,\dots, 7\}^\mathbb{Z}$, given by $\sigma ( \left\{s_i \right\}_{i\in \mathbb{Z}}) =  \left\{s_i \right\}_{i\in \mathbb{Z}}$ where $s_{i}'= s_{i+1} \in \{ 1, \dots, 7\}$. 
\begin{prop}\label{natext}
The map $\hat{F}$ is conjugate to the full shift  $\sigma$ on $\{1,\dots, 7\}^\mathbb{Z}$ by the symbolic coding, i.e.
\bes
\mathscr{C} (\hat{F} (u,v) ) = \sigma (\mathscr{C} (u,v )).
\ees
\end{prop}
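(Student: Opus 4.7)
The plan is to unpack both coordinates of $\hat{F}(u,v)$ separately and verify that the forward expansion of the first coordinate is the one-step shift of the expansion of $u$, while the backward expansion of the second coordinate gets the new entry $s_0$ prepended. Let us write $u = [s_0, s_1, \ldots]_O$ and $v = [s_{-1}, s_{-2}, \ldots]^-_O$ and set $i := s_0$, so that $u \in \Sigma_i$ and therefore $\hat{F}(u,v) = (\gamma\nu_i[u], \gamma\nu_i[v])$ by the definition (\ref{natextdef}).

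For the first coordinate, since $u \in \Sigma_i$, by construction of the octagon Farey map $F$ we have $\gamma\nu_i[u] = F(u)$. Writing $u' := F(u)$, the forward octagon additive continued fraction expansion of $u'$ is then $[s_1, s_2, \ldots]_O$ by the definition (\ref{sectorCFdef}) and the shift-compatibility of $F$ with its own itinerary. So the ``positive part'' of $\mathscr{C}(\hat{F}(u,v))$ is $(s_1, s_2, \ldots)$, exactly as required by the shift.

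For the second coordinate, set $v' := \gamma\nu_i[v]$. I will show that $[v']^-_O = [s_0, s_{-1}, s_{-2}, \ldots]$, and for this it is enough (by induction on the length of the backward itinerary) to verify the single-step identities: (a) $v' \in \Sigma'_{s_0}$, and (b) $F^-(v') = v$. Both follow from the definitions: since $v \in \Sigma_0$ and $\Sigma'_i = \gamma\nu_i\Sigma_0$, we have $v' = \gamma\nu_{s_0}[v] \in \Sigma'_{s_0}$, which gives (a). For (b), the branch of $F^-$ on $\Sigma'_{s_0}$ is the linear fractional transformation $\nu_{s_0}^{-1}\gamma$, so
\[
F^-(v') = (\nu_{s_0}^{-1}\gamma)\,(\gamma\nu_{s_0})[v] = v,
\]
where I use that $\gamma$ is an involution (noted just after (\ref{gammadef})) and $\nu_{s_0}^{-1}\nu_{s_0} = \mathrm{id}$. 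Inducting, $(F^-)^{k+1}(v') = (F^-)^k(v)$ lands in $\Sigma'_{s_{-k}}$ for every $k \geq 1$, which delivers the claimed expansion.

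Combining the two coordinates, the bi-infinite sequence $\mathscr{C}(\hat{F}(u,v))$ is $(\ldots, s_{-1}, s_0, s_1, s_2, \ldots)$ with the entry at index $0$ now equal to $s_1$ and the entry at index $-k$ equal to $s_{-k+1}$. By the definition of the shift $\sigma$ given just above the statement, this is precisely $\sigma(\mathscr{C}(u,v))$, which completes the proof. The only real content is the involution identity $\gamma^2 = \mathrm{id}$ together with the matching $\Sigma'_i = \gamma\nu_i\Sigma_0$ between the domains of $\hat{F}$ and $F^-$; everything else is bookkeeping of indices. There is no serious obstacle, since Lemma \ref{bijectionlemma} already arranges the partitions so that forward and backward expansions fit together consistently.
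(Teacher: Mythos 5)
Your proposal is correct and takes essentially the same route as the paper's own proof: both arguments reduce to the cancellation coming from $\gamma^2=\mathrm{id}$ and $\nu_{s_0}^{-1}\nu_{s_0}=\mathrm{id}$, showing that the forward expansion of $\gamma\nu_{s_0}[u]$ drops $s_0$ while the backward expansion of $\gamma\nu_{s_0}[v]$ acquires $s_0$ as its new first entry. The only difference is presentational -- the paper unwinds the expansions as finite compositions of inverse branches applied to auxiliary points $u_k, v_k$, whereas you verify the one-step identities $v'\in\Sigma'_{s_0}$ and $F^-(v')=v$ directly -- but the content is identical.
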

\begin{proof}
If $\mathscr{C}(u,v) =  \left\{s_i \right\}_{i\in \mathbb{Z}}$, we know by definition of the forward and backward expansion, that for each $k$ there exits $u_k \in \Sigma$ and $v_k \in \Sigma_0$ such that
\begin{eqnarray}
 u &=& F^{-1}_{s_0} F^{-1}_{s_1}\cdots F^{-1}_{s_k} (u_k) =  \nu_{s_0}^{-1} \gamma \dots  \nu_{s_k}^{-1} \gamma [u_k]  \nonumber \\ 
 v &=& \nonumber (F^-)^{-1}_{s_{-1}} (F^-)^{-1}_{s_{-2}}\cdots (F^-)^{-1}_{s_{-k}} (v_k) = \gamma \nu_{s_{-1}} \dots \gamma   \nu_{s_{-k}}[ v_k ] \nonumber\end{eqnarray}
where the second equalities simply used the explicit definitions of the branches $F$ and $F^-$ and $\gamma^{-1}=\gamma$. Acting by $\hat{F}$, since the entry $s_0$ tells us that  $(u,v) \in \bar \Sigma_{s_0} \times \bar \Sigma_0 $, we have $(u',v'): = F(u,v) = (\gamma \nu_{s_0} [u], \gamma \nu_{s_0}[ v] )$. Thus
\begin{eqnarray}
 u'= \gamma \nu_{s_0} [u ] &=&  ( \gamma \nu_{s_0} \nu_{s_0}^{-1} \gamma ) \nu_{s_1}^{-1} \gamma  \dots  \nu_{s_k}^{-1} \gamma [u_k]  =\nu_{s_1}^{-1} \gamma  \dots  \nu_{s_k}^{-1} \gamma [u_k] ; \nonumber \\ 
 v'=\gamma \nu_{s_0} [v ] &=& \gamma \nu_{s_0}  \gamma \nu_{s_{-1}} \dots \gamma   \nu_{s_{-k}} [v_k ] .\nonumber
\end{eqnarray}
This shows as desired that $u'=[s_1, \dots, s_k, \dots ]_O$ and $v'= [s_0, s_{-1}, \dots, s_{-k}, \dots]^-_O$.
 
\end{proof}

\subsection{Invariant measure for the natural extension}
Let us use inverse slope coordinates $(u,v)$. Let $\hat{\mu}$ be the measure on $\Sigma \times \Sigma_0 $ whose density is given by
\bes
\frac{\ud u \ud v}{ (u-v)^2}.
\ees
\begin{lemma}\label{invariance}
The measure $\hat{\mu}$ is invariant under $\hat F$, i.e.~for each measurable set $D \subset \Sigma\times \Sigma_0$ we have $\hat{\mu} (F^{-1}(D)) = \hat{\mu} (D)$. 
\end{lemma}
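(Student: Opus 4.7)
The plan is to reduce the invariance to a classical fact: the measure $du\, dv/(u-v)^2$ on $\mathbb{RP}^1 \times \mathbb{RP}^1$ is invariant under the diagonal action of $PGL(2,\mathbb{R})$ by linear fractional transformations. Since on each piece $\Sigma_i \times \Sigma_0$ the map $\hat F$ acts diagonally by the same Möbius transformation $g_i := \gamma\nu_i$ on both coordinates, invariance on each branch follows from this fact, and summing over branches (using the bijection property from Lemma \ref{bijectionlemma}) gives the result.

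First I would record the following elementary identity. For any matrix $g = \left(\begin{smallmatrix} a & b \\ c & d \end{smallmatrix}\right) \in GL(2,\mathbb{R})$ with $\det g = ad-bc$, a direct computation gives
\begin{equation*}
g'(x) = \frac{ad-bc}{(cx+d)^2}, \qquad g(u) - g(v) = \frac{(ad-bc)(u-v)}{(cu+d)(cv+d)}.
\end{equation*}
Hence, performing the change of variables $u' = g(u)$, $v' = g(v)$,
\begin{equation*}
\frac{du'\, dv'}{(u'-v')^2} = \frac{|g'(u)|\, |g'(v)|}{(g(u)-g(v))^2}\, du\, dv = \frac{(ad-bc)^2/((cu+d)^2(cv+d)^2)}{(ad-bc)^2(u-v)^2/((cu+d)^2(cv+d)^2)}\, du\, dv = \frac{du\, dv}{(u-v)^2}.
\end{equation*}
Thus the density of $\hat\mu$ is preserved by any diagonal Möbius action, regardless of the sign of $\det g$ (the determinant appears squared), which is important since the $\gamma\nu_i$ have determinant $\pm 1$.

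Next, by the definition (\ref{natextdef}) the map $\hat F$ consists of seven branches $\hat F_i : \Sigma_i \times \Sigma_0 \to \hat F(\Sigma_i \times \Sigma_0)$, each of which is the diagonal action of $g_i = \gamma\nu_i$. Given a measurable $D \subset \Sigma \times \Sigma_0$, by Lemma \ref{bijectionlemma} the sets $\hat F(\Sigma_i \times \Sigma_0)$ for $i = 1,\ldots, 7$ partition $\Sigma \times \Sigma_0$ (up to a set of $\hat\mu$-measure zero, namely the images of the sector boundaries), and $\hat F^{-1}(D)$ decomposes as the disjoint union
\begin{equation*}
\hat F^{-1}(D) = \bigsqcup_{i=1}^{7} \hat F_i^{-1}\bigl(D \cap \hat F(\Sigma_i \times \Sigma_0)\bigr).
\end{equation*}
Applying the change-of-variables identity above to each branch $\hat F_i$, we obtain $\hat\mu\bigl(\hat F_i^{-1}(D \cap \hat F(\Sigma_i \times \Sigma_0))\bigr) = \hat\mu\bigl(D \cap \hat F(\Sigma_i \times \Sigma_0)\bigr)$, and summing over $i$ yields $\hat\mu(\hat F^{-1}(D)) = \hat\mu(D)$ as required.

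There is no substantive obstacle here; the only point requiring a little care is to check that the density is well-defined and finite on the region in question (which holds because $\Sigma$ and $\Sigma_0$ are disjoint sectors, so $|u - v|$ is bounded away from $0$ on $\Sigma \times \Sigma_0$) and to ensure that the change of variables is applied on a domain where the branch of $\hat F$ is a genuine diffeomorphism onto its image, which is immediate from the piecewise definition of $\hat F$ and Lemma \ref{bijectionlemma}.
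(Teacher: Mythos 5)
Your proposal is correct and follows essentially the same route as the paper: the paper also isolates the identity $L'(u)L'(v) = (L(u)-L(v))^2/(u-v)^2$ for a fractional linear map with determinant $\pm 1$ (its Lemma \ref{identitylemma}), decomposes $D$ into the pieces $D_i = D \cap \hat F(\Sigma_i \times \Sigma_0)$ using the bijectivity from Lemma \ref{bijectionlemma}, and applies the change of variables $u = \gamma\nu_i[x]$, $v = \gamma\nu_i[y]$ on each branch before summing. Your added remarks on the determinant appearing squared and on $|u-v|$ being bounded away from zero on $\Sigma \times \Sigma_0$ are consistent with, and slightly more explicit than, the paper's treatment.
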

\noindent The lemma uses the following simple identity.
\begin{lemma}\label{identitylemma}
Let $L$ be $L(x)= \frac{a x +b}{cx+d}$, with $ad-bc=\pm1$. Then for each $u,v \in \mathbb{R}$, 
\be \label{identity}
L'(u) L'(v) = \frac{\left(L(u)-L(v)\right)^2 }{(u-v)^2}.
\ee
\end{lemma}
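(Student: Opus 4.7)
The plan is to prove this by direct computation, using the two standard facts about a linear fractional transformation $L(x) = \frac{ax+b}{cx+d}$ with $ad-bc = \pm 1$: namely, the derivative formula and the formula for differences of values.

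First I would compute $L'(x)$ using the quotient rule:
\bes
L'(x) = \frac{a(cx+d) - c(ax+b)}{(cx+d)^2} = \frac{ad-bc}{(cx+d)^2}.
\ees
Since $(ad-bc)^2 = 1$ by hypothesis, this gives
\bes
L'(u) L'(v) = \frac{1}{(cu+d)^2 (cv+d)^2}.
\ees

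Next I would compute $L(u) - L(v)$ by putting the two fractions over a common denominator and expanding the numerator. After the cross terms $acuv$ and $bd$ cancel, the numerator collapses to $(ad-bc)(u-v)$, yielding
\bes
L(u) - L(v) = \frac{(ad-bc)(u-v)}{(cu+d)(cv+d)}.
\ees
Squaring both sides and using $(ad-bc)^2 = 1$ again gives
\bes
\frac{(L(u)-L(v))^2}{(u-v)^2} = \frac{1}{(cu+d)^2 (cv+d)^2}.
\ees
Comparing the two expressions yields the desired identity. There is no real obstacle here since the lemma is a purely algebraic manipulation; the only thing to be careful about is to record that the hypothesis $ad-bc = \pm 1$ (as opposed to $=1$) is used in its squared form, so orientation-reversing elements of $V(S_O)$ are handled on equal footing with orientation-preserving ones, which is essential for the application to $\hat F$ in Lemma \ref{invariance}.
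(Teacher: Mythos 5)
Your proof is correct and follows essentially the same route as the paper: compute $L'(x)=(ad-bc)(cx+d)^{-2}$, compute $L(u)-L(v)=(ad-bc)(u-v)/((cu+d)(cv+d))$ by clearing denominators, and compare after squaring, with $(ad-bc)^2=1$ absorbing the sign. Nothing further is needed.
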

\begin{proof}
The proof is a simple computation. From the expression of $L(x)$, we  have $L'(x) = \pm (cx+d)^{-2}$. Computing
\bes \begin{split}
L(u)-L(v) & = \frac{ (cv+d)(au+b) - (cu+d)(av+b)}{(cu+d)(cv+d)} \\ & = \frac{acuv +cvb +adu +db - acuv - bcu - adv - db   }{(cu+d)(cv+d)} = \frac{ (ad-bc) (u-v) }{(cu+d)(cv+d) }
\end{split}
\ees
and using that $ad-bc=\pm1$ and  $L'(u)L'(v) = (cu+d)^{-2}(av+b)^{-2}$, we get (\ref{identity}).
\end{proof} 

\begin{proofof}{Lemma}{invariance}
We have to check that for each measurable set $D \subset \Sigma\times \Sigma_0$ we have $\hat{\mu} (\hat{F}^{-1}(D)) = \hat{\mu} (D)$. Let us first decompose $D$ as a disjoint union $\cup_{i=1, \dots, 7} D_i$ where $D_i = D \cap F(\Sigma_i \times \Sigma_0 )$.  Thus, if we prove that for each $i$ we have $\hat{\mu} (\hat{F}^{-1}D_i) = D_i$,  we have $\hat{\mu} \left(\hat{F}^{-1} (D) \right) = \hat{\mu} \left(\hat{F}^{-1}( \cup_i D_i) \right) = \sum_i \hat{\mu} \left(\hat{F}^{-1} (D_i) \right) = \sum_i \hat{\mu} \left(D_i\right) = \hat{\mu} (D)$.

Let us remark that if $(x,y) \in  \hat{F}^{-1}(D_i) $, since $\hat{F}^{-1}(D_i) \subset \Sigma_i \times \Sigma_0$, $\hat F(x,y) = (\gamma \nu_i[ x], \gamma \nu_i [y])$.  Thus, using the change of variables $u = \gamma \nu_i [x]$ and $v = \gamma \nu_i [y]$ and applying  Lemma \ref{identity} to $L (z) := \nu_i^{-1} \gamma [z]$, we have
\bes
\begin{split}
\hat{\mu} \left( \hat{F}^{-1}(D_i)\right) & = 
\int \!\! \int_{\hat{F}^{-1}(D_i)} \frac{1}{(x-y)^{2}}  \ \ud x  \ \ud y  = \int \!\! \int_{D_i} \frac{   L'(u) L'(v)  }{(  \nu_i^{-1}\gamma [ u] -\nu_i^{-1}\gamma[ v ] )^{2}}  \ \ud u  \ \ud v  = \\ & =  \int \!\! \int_{D_i} \frac{1}{(u-v)^2} \ \ud u  \ \ud v = {\hat{\mu}}(D_i)  . 
\end{split}
\ees

\subsection{Invariant measure for the Octagon Farey map} 
Integrating the invariant measure $\hat{\mu}$ for $\hat{F}$ along the $v$-fibers we get the following (infinite) invariant measure.
\begin{prop}\label{invmeasureF}
The measure $\mu$ on $\Sigma$ whose density in the $u$ coordinates is given by
\bes
\frac{\ud u }{ (1+\sqrt{2} - u ) }
\ees 
is invariant under the octagon Farey map $F$. 
\end{prop}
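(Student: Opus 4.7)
The plan is to obtain $\mu$ by pushing forward the two-dimensional invariant measure $\hat\mu$ along the projection $\pi:(u,v)\mapsto u$, exploiting the semiconjugacy (\ref{projectionF}).

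First I would observe that equation (\ref{projectionF}) gives $\pi\circ\hat F=F\circ\pi$, hence $\hat F^{-1}\pi^{-1}=\pi^{-1}F^{-1}$. Combining this with Lemma \ref{invariance} yields, for any measurable $A\subset\Sigma$,
\[
(\pi_*\hat\mu)(F^{-1}A)=\hat\mu(\pi^{-1}F^{-1}A)=\hat\mu(\hat F^{-1}\pi^{-1}A)=\hat\mu(\pi^{-1}A)=(\pi_*\hat\mu)(A),
\]
so $\pi_*\hat\mu$ is automatically $F$-invariant on $\Sigma$. It therefore remains to identify $\pi_*\hat\mu$ with the measure claimed in the proposition.

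Second, I would compute $\pi_*\hat\mu$ by Fubini. In inverse slope coordinates the sector $\Sigma_0=[0,\pi/8]$ corresponds to $v\in[1+\sqrt2,+\infty)$, since $\cot(\pi/8)=1+\sqrt2$; similarly $\Sigma$ corresponds to $u\in(-\infty,1+\sqrt2]$. For fixed $u\in\Sigma$, $u<1+\sqrt2$, an elementary integration gives
\[
\int_{\Sigma_0}\frac{dv}{(u-v)^2}=\int_{1+\sqrt2}^{\infty}\frac{dv}{(v-u)^2}=\left[-\frac{1}{v-u}\right]_{1+\sqrt2}^{\infty}=\frac{1}{1+\sqrt2-u}.
\]
This is exactly the density of $\mu$ stated in the proposition, so $\pi_*\hat\mu=\mu$, and the invariance of $\mu$ under $F$ follows from the first step.

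No step of this argument presents a real obstacle: the semiconjugacy (\ref{projectionF}) is built into the definition of $\hat F$, the invariance of $\hat\mu$ is Lemma \ref{invariance}, and the fiber integral is a one-line computation. The only mild subtlety is bookkeeping the endpoints when converting between the angle coordinate $\theta$ and the inverse slope $u=\cot\theta$, in order to verify that $\Sigma_0$ becomes the half-line $[1+\sqrt2,+\infty)$ used in the integral; this is where the constant $1+\sqrt2$ appearing in the density comes from.
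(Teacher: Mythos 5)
Your proposal is correct and follows essentially the same route as the paper: define $\mu=\pi_*\hat\mu$, note that the semiconjugacy $\pi\hat F=F\pi$ together with Lemma \ref{invariance} gives $F$-invariance, and then compute the density by integrating $\frac{1}{(u-v)^2}$ over the $v$-fiber $[\cot(\pi/8),+\infty)=[1+\sqrt2,+\infty)$. The only difference is cosmetic: you spell out the pushforward invariance chain explicitly, which the paper states in one line.
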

\begin{proof}
Since $\pi \hat{F} = F \pi$ and ${\hat{\mu}}$ is invariant under $\hat{F}$, the pull back measure $\mu : = \pi_{*} \hat{ \mu}$ given by $\mu (D) = \hat \mu (\pi^{-1}( D)) $ for each measurable $D \subset \Sigma$ is invariant under $F$. To compute the density of this measure, it is enough to integrate the density of $\hat{\mu}$ along $v$-fibers. Recalling that the domain $\Sigma_0$ of the $v$ coordinate is $[ \cot(\pi/8), \infty]$  and that $\cot{\pi/8}= 1+ \sqrt{2}$ and using the change of variables $t= u-v$, we get the density
\bes
\int_{1+ \sqrt{2}}^{+ \infty} \frac{\ud v }{(u-v)^2} = \int_{u- 1- \sqrt{2}}^{-\infty}  \frac{- \ud t }{t^2} = \left. \frac{1}{t} \right|_{u-1-\sqrt{2}}^{-\infty} = \frac{1}{1+\sqrt{2}-u}.
\ees
\end{proof}

\begin{rem}
Since the domain $\Sigma $ of $u$ is $[-\infty, 1 + \sqrt{2}]$, the density blows up both at $-\infty$ and as $u\rightarrow (1+ \sqrt{2})^-$. Since the  singularities  are of type $1/u$, the density is not integrable. This shows that the invariant measure $\mu$ is infinite. 
\end{rem}

\begin{rem}
The invariant density can also be expressed in angle coordinates $\theta \in [\pi/8, \pi]$ and since $u= \cot \theta$, it is given by
\bes
\frac{\ud \theta}{\sin \theta \cos \theta - (1+ \sqrt{2})\sin^2 \theta } .
\ees
\end{rem}

\end{proofof}

\subsection{A cross section for the Teichm\"uller geodesic flow}\label{crosssec}

The natural extension of the Farey map $\hat{F}$ defined in the previous section has the following interpretation as a cross section of the Teichm\"uller geodesic flow on the Teichm\"uller orbifold of the octagon. 
Let us recall from \S\ref{Teichdisksec} that Teichm\"uller geodesics in ${\cal{\tilde M}}_A(S)$ are identified with hyperbolic geodesics in $T_1\mathbb{D}$. 
%
%
Let us consider the following section $\tilde{\mathscr{S}}$ of $T_1 \mathbb{D}$. Let us denote by $\mathscr{E}_i \subset \partial \Disk$ the smaller closed arc which lies under the side $E_i$. The section $\tilde{\mathscr{S}}$ is given by points $(z,w) \in T_1\mathbb{D} $ such that $z$ belongs to the side  $E_0$ of the ideal tessellation into octagons and $v$ is such that the geodesics $\hat g$ through $z$ in direction $v$ has backward endpoint belonging to the arc $\mathscr{E}_0$  of $\partial\mathbb{D}$  and  
  forward endpoint  belonging to the union of the arcs $\mathscr{E} := \mathscr{E}_1 \cup \dots \cup \mathscr{E}_7$ of $\partial\mathbb{D}$, i.~e.~forward end point in arc complementary to $\mathscr{E}_0$. 
 One can parametrize the section $\tilde{\mathscr{S}}$ using as coordinates $(g_+,g_-) \in \partial \Disk \times \partial \Disk$, the 
   coordinates of the forward and backward points  of the geodesics $\hat g$ through $(z,w) \in \tilde{\mathscr{S}}$.   
It is obvious from the definitions  that $\tilde{\mathscr{S}}= \{(g_+,g_-) \st (g_+,g_- ) \in \mathscr{E} \times \mathscr{E}_0 \}$.  
\begin{rem}\label{nextside}
From the construction, we see that the next side of $\mathscr{O}$ which is hit by the geodesic $\hat g$ starting at $(z,w)\in \mathscr{S}$ is the side $E_k$, where $1\leq k \leq 7$, if and only if the coordinates $(g_+,g_-)$ of $(z,w) $  are such that $g_+ \in \mathscr{E}_k$. 
\end{rem}
The section $\tilde{\mathscr{S}}$ projects to a section $\mathscr{S}$ on $T_1 \mathbb{D}/V_P({S})$, which is identified with a bundle ${\cal{M}}_{A}(S)$ over the Teichm\"uller orbifold ${\cal{M}}_{I}(S)$ associated to the octagon (see \S\ref{Teichorbsec}). Let us remark that a  geodesic $\hat g $ projected  to  $T_1 \mathbb{D}/V_P({S})$ gives a geodesic path $g$ which is a hyperbolic billiard trajectory in the fundamental triangle $\mathscr{F}$ (see \S\ref{Teichorbsec}). We have the following:
\begin{prop}\label{firstreturn}
The first return map of the geodesic flow on $T_1 \mathbb{D}/V_P(S)$ (which is  the hyperbolic billiard flow in $\mathscr{F}$) to the cross section $\mathscr{S}$ 
 is conjugated to the map $\hat{F}$. 
\end{prop}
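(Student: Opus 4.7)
The plan is to parametrize $\tilde{\mathscr{S}}$ by the pair of boundary endpoints $(g_+,g_-)$ of its geodesics, compute the first time the geodesic crosses a $V_P(S)$-translate of the side $E_0$, bring the result back to $\tilde{\mathscr{S}}$ by an appropriate Veech group element, and identify the resulting map with $\hat F$ in inverse slope coordinates.

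First I would set up coordinates: a point of $\tilde{\mathscr{S}}$ is determined by the endpoints $(g_+,g_-)$ of the geodesic $\hat g$ through it, yielding an identification $\tilde{\mathscr{S}}\cong \mathscr{E}\times\mathscr{E}_0$. Using Remark \ref{uthetacorrespondence} to parametrize $\partial\mathbb{D}$ by inverse slope coordinates, the arc $\mathscr{E}_i$ lying under $E_i$ corresponds to the sector $\Sigma_i$, so $\tilde{\mathscr{S}}\cong \Sigma\times\Sigma_0$, and these coordinates descend to $\mathscr{S}$ in the quotient.

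Next, for $(z,w)\in\tilde{\mathscr{S}}$ with $g_+\in\mathscr{E}_k$, Remark \ref{nextside} tells us that $\hat g$ exits $\mathscr{O}$ through $E_k$, entering the adjacent tile $\mathscr{O}\gamma_k$. I would then argue that this is exactly the first return time in the quotient: the sides of the ideal octagon tessellation are precisely the $V_P(S)$-translates of $E_0$ having both endpoints on $\partial\mathbb{D}$, and no such translate is crossed strictly inside $\mathscr{O}$. To bring the flow back into $\tilde{\mathscr{S}}$, I would apply on the right the Veech group element $\eta_k:=\nu_k^{-1}\gamma$, and verify that $\mathscr{O}\gamma_k\cdot\eta_k=\mathscr{O}\nu_k^{-1}=\mathscr{O}$ (the second equality because $\nu_k$ is a symmetry of the regular octagon $\mathscr{O}$) and that $E_k\cdot\eta_k=E_0\nu_k\cdot\nu_k^{-1}=E_0$. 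Uniqueness of the choice of $\eta_k$ follows from the fact that $\mathscr{O}$ is a strict fundamental domain for $V_{\mathscr{O}}$, as discussed in \S\ref{Teichcuttseqsec}.

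Finally I would compute the induced action on coordinates. The new geodesic $\hat g\cdot\eta_k$ has endpoints $(g_+\eta_k,g_-\eta_k)$, and by the explicit calculation at the end of the proof of Lemma \ref{limitpoint}, the right action of $\eta_k=\nu_k^{-1}\gamma$ on $\partial\mathbb{D}$ is expressed in inverse slope coordinates as the linear fractional transformation by the inverse matrix $\gamma\nu_k$, so $u'=\gamma\nu_k[u]$ and $v'=\gamma\nu_k[v]$. Comparing with \eqref{natextdef}, this is exactly $\hat F(u,v)$ for $u\in\Sigma_k$. It remains to check that $(u',v')\in\Sigma\times\Sigma_0$, which is immediate from the computations in the proof of Lemma \ref{bijectionlemma} ($\gamma\nu_k[\Sigma_k]=\Sigma$ and $\gamma\nu_k[\Sigma_0]\subset\Sigma_0$). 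The main obstacle will be the careful bookkeeping identifying $\mathscr{E}_i$ with $\Sigma_i$ and converting the right action of $\eta_k$ on $\partial\mathbb{D}$ into the linear fractional action of $\gamma\nu_k$ on the slope coordinate — which is precisely the content of Lemma \ref{limitpoint}, so the remainder is essentially a geometric translation of that calculation into the two-endpoint setting.
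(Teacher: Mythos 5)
Your proposal is correct and follows essentially the same route as the paper's proof: parametrize the section by the endpoint pair $(g_+,g_-)$, use Remark \ref{nextside} to identify the exit side $E_k$, normalize by the right action of $\nu_k^{-1}\gamma$, and translate this into the linear fractional action of $\gamma\nu_k$ on inverse slope coordinates via the computation in Lemma \ref{limitpoint}. The extra checks you include (uniqueness of the normalizing element and that the image lands back in $\Sigma\times\Sigma_0$ via Lemma \ref{bijectionlemma}) are details the paper leaves implicit, not a different argument.
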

\begin{proof}
Let $g$ be the geodesic path on $T_1 \mathbb{D}/V_P({S})$  starting at  $(z,w) \in \mathscr{S}$.  Let  $\hat{g}$ be the geodesic lift of $g$ to $T_1 \mathbb{D}$. Let $(g_+,g_-) \in \partial \Disk \times \partial \Disk$ be the endpoints of $\hat g$. 
Let us identify $\partial \Disk$ with $\mathbb{R}= \partial \mathbb{H}$ by extending  the identification $\phi: \mathbb{H} \rightarrow \mathbb{D}$  (given  in \S\ref{discsec}) and let us denote by $(x,y) \in \mathbb{R}^2 $ the point corresponding to $(g_+,g_-)$.
Since all the sides of $\mathscr{O}$ project to the same side on $T_1 \mathbb{D}/V_P(S)$,   the first return of $g$ to $\mathscr{S}$ is the projection on $T_1 \mathbb{D}/V_P(S)$ of the first point of $\hat{g}$ which hits one of the sides of $\mathscr{O}$. Let us assume that $g_+ \in \mathscr{E}_k$, for some $1\leq k\leq 7$. By Remark  \ref{uthetacorrespondence}, this assumption is equivalent to assuming $ -1/x = u \in \Sigma_k$, where $u=\cot \theta$ is the inverse of the slope coordinate on $\mathbb{RP}^1$. Moreover, by Remark \ref{nextside}, $\hat{g}$ leaves $\mathscr{O}$ by crossing the side $E_i$.  The projection to $T_1 \mathbb{D}/V_P(S)$ can be thus obtained applying the right action of the element $\nu_i^{-1} \gamma $, which maps $E_i$ to $E_0$ and $\mathscr{O}_i$ to $\mathscr{O}$. This maps  $(x,y)$ to  $\left( \frac{a x + c}{bx + d}, \frac{a y + c}{by + d}\right)$, where $ \left( \begin{smallmatrix} a_i & b_i \\ c_i & d_i  \end{smallmatrix} \right) = \nu_{i}^{-1} \gamma $.  If we hence apply the conjugacy  $(x(u), y(v)) = ( -1/u, -1/v)$, reasoning as in the proof of Lemma \ref{limitpoint}, this action is conjugated  to $(u,v)\mapsto (\gamma\nu_i[x],\gamma \nu_i[y])$, which is   exactly the action  of the map $\hat{F}$ for $u\in \Sigma_i$ (see (\ref{natextdef})). 
\end{proof}

Let us now show that the labeling of the edges of the tree $\mathscr{T}$ described in \S\ref{renormcutseq} is naturally related to a symbolic coding of the geodesic flow on the Teichm\"uller orbifold. 
Given a geodesic ray $r_{\theta}= \{ g_t^\theta \cdot S_O\}_{t\geq 0}$, let $p_{\theta}$ the path on the tree $\mathscr{T}$ shadowing $r_{\theta}$ and let $s_0, s_1, \dots$ be the labels of the edges of $p_{\theta}$. The first label $s_0$ simply tells us that $r_{\theta}$  will first hit the side $E_{s_0} $ of $\mathscr{O}$. Let $t_{\theta}$ be the corresponding hitting time. Consider the rest of the ray $\{g_t^{\theta} \cdot S_O\}_{t\geq t_{\theta}}$  and call $g_{\theta}$ its projection on $\mathbb{D}/V_P(S_O)$, which is a  path on the Teichm\"uller orbifold, starting on the cross section. Consider the first return map of $g_{\theta}$ to  the section. The section can be partitioned into sets $\mathscr{S}_i$, $1\leq i \leq k$, where $\mathscr{S}_i$ is  the projection to $\mathscr{S} $ of points $(z,w)$ of  $\tilde{\mathscr{S}}$ so that the geodesic starting at $(z,w)$ first leaves the octagon $\mathscr{O}$ through the side $E_i$ of $\mathscr{O}$. We can code the first return map by assigning the symbol $i$ to each set $\mathscr{S}_i$, so that the $k^{th}$ symbol of the coding is $i$ if and only if the $k^{th}$ return belongs to the set $\mathscr{S}_i$.  Then, combining Proposition \ref{firstreturn} with (\ref{projectionF}) and Proposition \ref{CFandcuttseq}, we have the following. 
\begin{cor} 
This symbolic coding of the first return map of $g_{\theta}$ to the cross-section is given by the labeling $s_1, s_2, \dots $ of the edges of the path $p_{\theta}$.
\end{cor}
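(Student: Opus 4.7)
The plan is to combine the conjugacy of Proposition \ref{firstreturn} with the projection formula (\ref{projectionF}) and the continued fraction interpretation of Proposition \ref{CFandcuttseq}. First I would verify that, under the conjugacy of Proposition \ref{firstreturn}, the partition $\{\mathscr{S}_i\}_{i=1}^{7}$ of $\mathscr{S}$ corresponds exactly to the sector partition $\{\Sigma_i\}_{i=1}^{7}$ via the $u$-coordinate: by Remark \ref{nextside}, the index of the block containing a lift $(z,w) \in \tilde{\mathscr{S}}$ is determined by which arc $\mathscr{E}_k$ contains the forward endpoint $g_+$, and by Remark \ref{uthetacorrespondence} together with the coordinate change $u = -1/x$ used in the proof of Proposition \ref{firstreturn}, this is equivalent to $u \in \Sigma_k$.

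Next I would pin down the initial $u$-coordinate of the starting point of $g_\theta$ on $\mathscr{S}$. By Remark \ref{uthetacorrespondence}, the geodesic ray $r_\theta$ first crosses $E_{s_0}$ precisely because $\theta \in \Sigma_{s_0}$. The starting point of $g_\theta$ is the projection of this crossing to the orbifold, and lifting it back to $\tilde{\mathscr{S}}$ amounts to applying the Veech element $\nu_{s_0}^{-1}\gamma$, which sends $E_{s_0}$ to $E_0$ and the adjacent tile $\mathscr{O}_{s_0}$ back to $\mathscr{O}$. Repeating the computation at the end of the proof of Proposition \ref{firstreturn}, this right action is conjugated (via $u = -1/x$) to the map $u \mapsto \gamma\nu_{s_0}[u]$, which by the definition of the Farey map is $F(u)$. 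Hence the initial $u$-coordinate on $\mathscr{S}$ equals $F(\theta)$, and by Proposition \ref{CFandcuttseq} it lies in $\Sigma_{s_1}$, so the starting position belongs to $\mathscr{S}_{s_1}$.

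By Proposition \ref{firstreturn} the first return map is conjugated to $\hat{F}$, and by (\ref{projectionF}) the $u$-coordinate evolves under a single application of $F$. Consequently, after $k \geq 0$ iterations of the first return map, the $u$-coordinate is $F^{k+1}(\theta)$, which by Proposition \ref{CFandcuttseq} belongs to $\Sigma_{s_{k+1}}$. Hence the $k$-th iterate of the first return map lies in $\mathscr{S}_{s_{k+1}}$, so reading off the labels of successive returns yields the sequence $s_1, s_2, s_3, \ldots$, as claimed.

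The main obstacle is less conceptual than bookkeeping: one must correctly track the initial Veech element $\nu_{s_0}^{-1}\gamma$ that sends $E_{s_0}$ to $E_0$, since this is exactly the ingredient that produces the index shift by one from the combinatorial-geodesic labels $(s_0, s_1, s_2, \ldots)$ on $\mathscr{T}$ to the symbolic coding $(s_1, s_2, s_3, \ldots)$ of the first return map on the orbifold cross-section.
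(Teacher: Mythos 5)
Your proposal is correct and follows exactly the route the paper intends: the paper gives no separate proof of this corollary beyond the sentence "combining Proposition \ref{firstreturn} with (\ref{projectionF}) and Proposition \ref{CFandcuttseq}," and your argument is a faithful expansion of precisely that combination, including the one genuinely delicate point (the initial application of $\nu_{s_0}^{-1}\gamma$, which accounts for the shift from $(s_0,s_1,\dots)$ to $(s_1,s_2,\dots)$). Nothing further is needed.
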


\section{The octagon Gauss map}\label{gausssec}
Just as in the case for the classical Farey map, the invariant measure for the octagon Farey map $F$ is not finite. 
We will define an induced map $G $ from the octagon Farey map $F$ which admits a \emph{finite} invariant measure. The map $G $ is constructed from $F$ in a manner analogous to the way in which the classical Gauss map is constructed from the Farey map. Thus, we will call it the \emph{octagon Gauss map}.
Let us remark that $F$ has two parabolic fixed points, $\pi/8$ and $\pi$, which belong to the branches $F_1$ and $F_7 $ respectively. Let us define $G$ by putting together all consecutive iterates of the branch $F_1$ and of the branch $F_7$ respectively, as follows. 

If $u \in \Sigma_1$, let us define $n_1(u) = \min \{ n \in \mathbb{N}$ such that $F^n(u) \notin \Sigma_1 \}$. Similarly, if $u \in \Sigma_7$, let us define $n_7(u) = \min \{ n \in \mathbb{N}$ such that $F^n(u) \notin \Sigma_7 \}$. Then, recalling the definition of $F$ in \S\ref{Fareymapsec}, we can write
\be\label{Gaussdef1}
G(u) = \left\{ 
\begin{array}{ll}
F^{n_1(u)}(u) & \mathrm{if} \ u \in \bar\Sigma_1 \\
F(u) & \mathrm{if} \ u \in \bar\Sigma_2 \cup \bar\Sigma_3 \cup \bar\Sigma_4 \cup \bar\Sigma_5 \cup \bar\Sigma_6 \\
F^{n_7(u)}(u) & \mathrm{if} \ u \in \bar\Sigma_7 
\end{array}
\right.
\ee
More explicitly, one can check that if $u\in [F_1^{-1}(\pi/4),\pi/4 ]$, $n_1(u)=1$ and $G(u)= F(u)$ and by induction that if $u \in  [F_1^{-n}(\pi/4),F_1^{-n+1}(\pi/4) ]$, then $n_1(u)=n$. Let us denote by $\Sigma_{1,n}$ the sectors of $\mathbb{R}\mathbb{P}_1$ corresponding to  $u \in  [F_7^{-n+1}(7\pi/8),F_1^{-n}(7\pi/8) ]$. The sector $ \Sigma_{7,n}$ is exactly the one on which $n_7(u)=n$. Then we can write
\be\label{Gaussdef2}
G(u) = \left\{ 
\begin{array}{ll}
(\gamma \nu_1)^{n}[u] & \mathrm{if} \ u \in \Sigma_{1,n}, \quad n \in \mathbb{N}_* ; \\
(\gamma \nu_i) [u] & \mathrm{if} \ u \in \Sigma_i, \quad i=2, \dots, 6  ; \\
(\gamma \nu_7)^{n}[u] & \mathrm{if} \ u \in \Sigma_{7,n}, \quad n \in \mathbb{N}_* .
\end{array}
\right.
\ee

  In angle coordinates, the map $G$ is defined on $[\pi/8, \pi]$. 
  The graph of $G$ expressed in angle coordinates is shown in Figure \ref{Gaussgraph}.
  \begin{figure}
\centering
{\includegraphics[width=.5\textwidth]{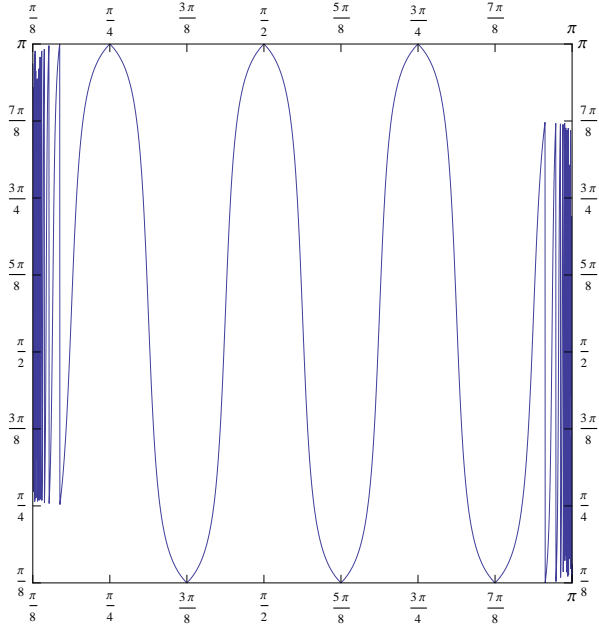}}
\caption{The octagon Gauss map $G$  in angle coordinates.\label{Gaussgraph}}
\end{figure}
The map has has countably many branches. We will denote by $G_{i}$, $i=2,\dots, 6$ and $G_{k,n}$, $k\in \{1,7\}$, $n\in \mathbb{N}_*$ the branches obtained restricting $G$ to $\Sigma_i$ and $\Sigma_{k,n}$ respectively. 
 \begin{rem} \label{images}
  Let us remark that while the image $G(\Sigma_i) = [\pi/8, \pi] $ for all $i=2, \dots, 6$, for each $n\in \mathbb{N}_*$ we have $G(\Sigma_{1,n}) = [2\pi/8, \pi] $ and  for each $n\in \mathbb{N}_*$ we have $G(\Sigma_{7,n}) = [\pi/8, 7\pi/8] $.
\end{rem} 

\paragraph{Natural extension and invariant measure for the octagon Gauss map.}
The natural extension $\hat{G}$ of $G$ is defined analogously to the natural extension $\hat{F}$ of $F$.  If $u$ and $v$ denote inverse slope coordinates and $(u,v) \in \Sigma \times \Sigma_0$, let us set:
\begin{equation}\label{natextGaussdef}
 \hat{G}( u,v )  :=  
\left\{ 
\begin{array}{ll}
\left((\gamma \nu_1)^{n}[u], (\gamma \nu_1)^{n}[u]\right) & \mathrm{if} \ u \in \Sigma_{1,n}, \quad n \in \mathbb{N}_* ; \\
\left((\gamma \nu_i) [u], (\gamma \nu_i) [v]\right) & \mathrm{if} \ u \in \Sigma_i, \quad i=2, \dots, 6  ; \\
\left( ( \gamma \nu_7)^{n}[u] , (\gamma \nu_7)^{n}[v] \right) & \mathrm{if} \ u \in \Sigma_{7,n}, \quad n \in \mathbb{N}_* .
\end{array}
\right. 
\end{equation}
Again  here the action is the same on both coordinates and is  only determined by the sector to which $u$ belongs.
The projection of $\hat{G}$ on the first coordinate is clearly $G$.

Let us define the following domain $D_{\hat{G}}$ of $\mathbb{R}\mathbb{P}^1$ giving its angle coordinates:
\be\label{decomp1}
D_{\hat{G}}: = D_1 \cup D_* \cup D_7, \quad \mathrm{where}\  \left\{ \begin{array}{l}
D_1 = \Sigma_1 \times [0, \gamma ( 2\pi/8) ] \\
D_* = [{2\pi}/{8}, {7\pi}/{8}] \times \Sigma_0 \\
D_7 = \Sigma_7 \times \left[  \gamma (7\pi/8) ,\pi/8  \right]
\end{array}
\right.
\ee
Figure \ref{domainGhat} shows two copies of domain $D_{\hat{G}}$ in angle coordinates: in the left copy, the sets $D_1, D_*$ and $D_7$ are shown. The right copy of the domain illustrates the following alternative decomposition, that can be easily checked:
\be\label{decomp2}
D_{\hat{G}}: = D_1' \cup D_*' \cup D_7', \quad \mathrm{where} \, \left\{ \begin{array}{l}
D_1' = [2\pi/8 , \pi ]  \times [ \gamma ( 2\pi/8), \pi/8 ] \\
D_*' = \Sigma  \times [ \gamma ( 7\pi/8), \gamma (2\pi/8) ]  \\
D_7' =  [ 0,  7\pi/8 ]  \times \left[ 0, \gamma (7\pi/8)  \right]
\end{array}
\right.
\ee

\begin{figure}
\centering
{\includegraphics[width=1\textwidth]{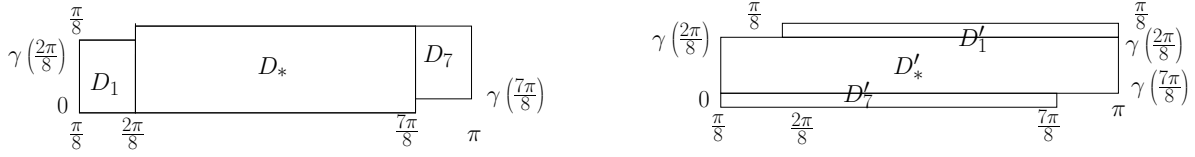}}
\caption{Domain and codomain of natural extension $\hat{G}$ of the octagon Gauss map $G$.\label{domainGhat}}
\end{figure}

\begin{lemma}
The map $\hat{G}$ maps  $D_{\hat{G}}$ to itself and is a bijection. Moreover, it preserves a measure whose density  is given by
$\frac{\ud u \ud v}{ (u-v)^2}$. 
\end{lemma}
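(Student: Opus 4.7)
The plan is to handle the measure-invariance and the bijectivity separately. The measure invariance is essentially a corollary of Lemma \ref{invariance}: each branch of $\hat{G}$ has the form $(u,v)\mapsto(L[u],L[v])$ where $L$ is a single M\"obius transformation with $\det L=\pm1$ (namely $\gamma\nu_i$ on $\Sigma_i$ for $i=2,\dots,6$, or the iterate $(\gamma\nu_1)^n$ on $\Sigma_{1,n}\times[0,\gamma(2\pi/8)]$, or $(\gamma\nu_7)^n$ on $\Sigma_{7,n}\times[\gamma(7\pi/8),\pi/8]$). The identity in Lemma \ref{identitylemma} applies to any such $L$, so the change-of-variables computation done in the proof of Lemma \ref{invariance} carries over unchanged and shows that $\hat\mu(\hat G^{-1}(D))=\hat\mu(D)$ for every measurable piece of any branch, hence for every measurable $D\subset D_{\hat G}$.

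For the bijection claim, the strategy is to match the two decompositions of $D_{\hat G}$ given in $(\ref{decomp1})$ and $(\ref{decomp2})$ and show that $\hat G$ sends each piece of the first bijectively onto the piece of the second with the same index. Concretely, I would verify:
\begin{itemize}
\item $\hat G(D_1)=D_1'$: on each $\Sigma_{1,n}\times[0,\gamma(2\pi/8)]$ the map $(\gamma\nu_1)^n$ is a M\"obius transformation, hence injective; its image on the $u$-coordinate is $[2\pi/8,\pi]$ by Remark \ref{images}, and its image on the $v$-coordinate is a subinterval $J_n\subset[\gamma(2\pi/8),\pi/8]$. Since $\gamma\nu_1$ has a parabolic fixed point at $\pi/8$, the $J_n$ are disjoint and their union telescopes to exactly $[\gamma(2\pi/8),\pi/8]$, giving $D_1'$;
\item $\hat G(D_*)=D_*'$: on each $\Sigma_i\times\Sigma_0$ ($i=2,\dots,6$) the branch $\gamma\nu_i$ is already a branch of $\hat F$; combining the seven-way image analysis from Lemma \ref{bijectionlemma} with the fact that the branches for $i=1$ and $i=7$ are excluded yields that the $u$-images cover all of $\Sigma$ and the $v$-images of $\Sigma_0$ tile $[\gamma(7\pi/8),\gamma(2\pi/8)]$;
\item $\hat G(D_7)=D_7'$: symmetric to the $D_1$ case, using the parabolic fixed point of $\gamma\nu_7$ at $\pi$.
\end{itemize}
Injectivity across different pieces of $(\ref{decomp1})$ is automatic because their images, by the above, lie in disjoint pieces of $(\ref{decomp2})$.

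The main obstacle is the telescoping verification for the $D_1$ and $D_7$ branches: one must check that the nested intervals $J_n=(\gamma\nu_1)^n([0,\gamma(2\pi/8)])$ are pairwise disjoint, accumulate only at the parabolic fixed point $\pi/8$, and exhaust $[\gamma(2\pi/8),\pi/8]$. The cleanest way is to observe that since $\hat F$ is a bijection on $\Sigma\times\Sigma_0$ (Lemma \ref{bijectionlemma}) preserving the rectangle structure shown in Figure \ref{rectangles}, one can read off directly that $\hat F$ sends $\Sigma_1\times[0,\gamma(2\pi/8)]$ onto $[2\pi/8,\pi]\times[\gamma(2\pi/8),\pi/8]\,\cup\,\bigl(\Sigma_1\times[0,\gamma(2\pi/8)]\bigr)'$ where the second piece is again of the same form with shrunken $v$-interval, so the first-exit map $\hat G$ from $\Sigma_1$ on the $u$-coordinate automatically exhausts the complementary $v$-strip. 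Once this step is done, parts (1)--(3) of bijectivity follow immediately, and together with the measure-invariance argument above, the lemma is complete.
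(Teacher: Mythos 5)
Your proposal is correct and follows essentially the same route as the paper: measure invariance via Lemma \ref{identitylemma} applied branch-by-branch exactly as in Lemma \ref{invariance}, and bijectivity by matching the decompositions (\ref{decomp1}) and (\ref{decomp2}) piecewise, with the $D_1$ and $D_7$ cases handled by the same telescoping of the intervals $(\gamma\nu_1)^n[0,\gamma(2\pi/8)]$ accumulating at the parabolic fixed point $\pi/8$ (the paper phrases this as monotone convergence of $(\gamma\nu_1)^n[0]$ to the unique fixed point of $\gamma\nu_1$ in $\Sigma_0$). Your alternative "first-exit" reading of the telescoping from the bijectivity of $\hat F$ is a harmless repackaging of the same computation.
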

\begin{proof}
As for $\hat{F}$, injectivity is immediate and we just have to check that $\hat{G}(D_{\hat{G}})=D_{\hat{G}}$. Since $D_* = \cup_{i=2}^6 \Sigma_i \times \Sigma_0$, one can easily see that $\hat{G}(D_*)= \hat{F} (D_*)= D_*'$. Thus, in view of the two decompositions (\ref{decomp1}, \ref{decomp2}), it is enough to prove that $\hat{G} (D_1)= D_1' $ and $\hat{G} (D_7)= D_7' $. 

For $n \in \mathbb{N}_*$, set $D_{1,n}:= \Sigma_{1,n}\times [0, \gamma(2\pi/8)]$, so that $D_1 = \cup_n D_{1,n}$. From the definition of $\hat{G}$, we see that if $(u,v) \in D_{1,n}$, then $\hat{G}(u,v) = ((\gamma \nu_1)^{n}[u], (\gamma \nu_1)^{n}[v])$.  From Remark \ref{images}, each $D_{1,n}$ stretches in the $u$-direction under the action of $\hat{G}$ to cover the full width of of $D_1'$, i.~e.~the $u$-interval $[2\pi/8, \pi ]$.  
Let us remark that $2\pi/8= \nu_1 (0)$ and that we have 
\bes
(\gamma \nu_1)^n \left[0, \gamma (2\pi/8) \right] = \left[(\gamma \nu_1)^n[0],(\gamma \nu_1)^{n+1} [0]  \right]. 
\ees 
Moreover, we have $\lim_{n\rightarrow \infty} (\gamma \nu_1)^n[0] = \pi/8$, since $\{ (\gamma \nu_1)^n[0] \}_{n\in \mathbb{N}}$ is a monotone sequence contained in $\Sigma_0$ and $\pi/8$ is the only fixed point of $\gamma \nu_1$ in $\Sigma_0$. Thus,
\bes
\bigcup_{n=1}^{\infty} (\gamma \nu_1)^n \left[0, \gamma (2\pi/8) \right] = \left[ \gamma (2\pi/8) , \pi/8  \right].
\ees
This show that $\bigcup_n \hat{G} (D_{1,n}) =D_1 '$. Analogously, defining $D_{7,n}:=  \Sigma_{1,n} \times [   \gamma (7\pi/8), \pi/8]$, one can show that $\bigcup_n \hat{G} (D_{7,n}) =D_7 '$.

The invariance of the measure given by the density $\frac{\ud u \ud v}{ (u-v)^2}$ is analogous to the proof of Lemma \ref{invariance}. It is enough to decompose any given measurable set by intersecting it with $\hat{G}({D_{k,n}})$, $k\in{1,7}$, $n\in \mathbb{N}_*$ and $\hat{G}(D_{*})$ and remark that, since on each $\hat{G}^{-1}$ in inverse of slope coordinates  is given by a fractional linear transformation, one can use Lemma \ref{identitylemma} to check invariance of the measure.  
\end{proof}

\begin{prop}
The measure $\mu_{G}$ on $\Sigma$ whose density in the $u$ inverse of the slope coordinate is

\be \label{densitymeasure} 
\begin{cases}\frac{\ud u }{u- \cot(\gamma(7\pi/8))} - \frac{\ud u }{1+ \sqrt{2}-u}, 
    & \text{   if } \ u \in [ -\infty, -1 - \sqrt{2} ] \\
   \frac{\ud u }{1+ \sqrt{2}- u} , 
    & \text{   if } \ u \in [ -1 - \sqrt{2}, 1 ]\\
  \frac{\ud u }{\sqrt{2}- u} , 
    & \text{   if } \ u \in [ 1, 1 + \sqrt{2} ] 
    \end{cases}
\ee

is invariant under the octagon Gauss map $G$. 
\end{prop}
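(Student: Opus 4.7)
The plan is to recover $\mu_{G}$ as the pushforward $\pi_{*}\hat{\mu}$ of the natural extension measure $\hat{\mu}$ under the projection $\pi(u,v)=u$, exactly in the manner of Proposition \ref{invmeasureF}. The preceding lemma established that $\hat{\mu}$ with density $\ud u\,\ud v/(u-v)^{2}$ is $\hat{G}$-invariant on $D_{\hat{G}}$, and by construction $\pi\circ\hat{G}=G\circ\pi$, so $\pi_{*}\hat{\mu}$ is automatically $G$-invariant on $\Sigma$. The proof therefore reduces to showing that integrating $\hat{\mu}$ along the $v$-fibers of $D_{\hat{G}}$ yields exactly the piecewise density written in (\ref{densitymeasure}).

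I would work in inverse slope coordinates throughout. The three cases of (\ref{densitymeasure}) correspond precisely to the three pieces of the decomposition (\ref{decomp1}): converting $\Sigma_{7}=[7\pi/8,\pi]$, the union $\Sigma_{2}\cup\cdots\cup\Sigma_{6}=[2\pi/8,7\pi/8]$ and $\Sigma_{1}=[\pi/8,2\pi/8]$ via $u=\cot\theta$ (using $\cot(\pi/8)=1+\sqrt{2}$, $\cot(2\pi/8)=1$, $\cot(7\pi/8)=-1-\sqrt{2}$) produces the three $u$-intervals $[-\infty,-1-\sqrt{2}]$, $[-1-\sqrt{2},1]$ and $[1,1+\sqrt{2}]$. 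Over the middle interval the $v$-fiber is all of $\Sigma_{0}$, which in inverse slopes is $[1+\sqrt{2},+\infty]$; over $\Sigma_{1}$ the fiber is $[0,\gamma(2\pi/8)]$ in angles, i.e.\ $[\cot(\gamma(2\pi/8)),+\infty]$ in inverse slopes; over $\Sigma_{7}$ it is the bounded interval $[1+\sqrt{2},\cot(\gamma(7\pi/8))]$.

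Each fiber integral is elementary, since
\begin{equation*}
\int_{a}^{b}\frac{\ud v}{(u-v)^{2}}=\frac{1}{u-b}-\frac{1}{u-a}.
\end{equation*}
Applied to the middle case this reproduces $1/(1+\sqrt{2}-u)$ exactly as in Proposition \ref{invmeasureF}. The $\Sigma_{1}$ case, whose $v$-fiber is unbounded above, contributes only the single term with denominator involving $\cot(\gamma(2\pi/8))$; the $\Sigma_{7}$ case, whose $v$-fiber is a bounded subinterval of $\Sigma_{0}$, contributes a difference of two terms with denominators involving $\cot(\gamma(7\pi/8))$ and $1+\sqrt{2}$. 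After rewriting $-1/(u-1-\sqrt{2})=1/(1+\sqrt{2}-u)$, these match the three expressions in (\ref{densitymeasure}).

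The only real obstacle is careful bookkeeping: one must translate the angle-coordinate endpoints of the $v$-fibers of $D_{\hat{G}}$ into inverse slopes using $u=\cot\theta$, and then track signs according to whether $u$ lies below, inside, or above the fiber. Once this is set up correctly the integration is completely routine, and no new ingredients beyond those already used for the Farey map are required.
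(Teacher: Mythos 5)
Your proposal is essentially identical to the paper's own proof: the paper likewise reduces the claim to pushing forward the $\hat{G}$-invariant density $\ud u\,\ud v/(u-v)^2$ under $\pi(u,v)=u$ and integrating over the $v$-fibers of the three pieces $D_1$, $D_*$, $D_7$ of $D_{\hat{G}}$, writing down exactly the three integrals $\int_{\cot(\gamma(2\pi/8))}^{+\infty}$, $\int_{\cot(\pi/8)}^{+\infty}$ and $\int_{\cot(\pi/8)}^{\cot(\gamma(7\pi/8))}$ of $\ud v/(u-v)^2$ that you identify. Your explicit antiderivative $\int_a^b \ud v/(u-v)^2 = \frac{1}{u-b}-\frac{1}{u-a}$ and the accompanying sign bookkeeping are exactly the ``evaluation'' step the paper leaves implicit, so nothing is missing.
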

\begin{proof}
As in the proof of Proposition \ref{invmeasureF}, it is enough to compute the density of the invariant measure for $G$ by integrating the invariant density for $\hat{G}$ along the fibers of the domain $D_{\hat{G}}$. This shows that the expressions in equation (\ref{densitymeasure}) are obtained by  evaluating the following integrals:
\begin{align*}
 & \int_{\cot(\gamma(2\pi/8))}^{+\infty}  \frac{\ud v}{ (u-v)^2} \quad  \text{  if  } \ u \in \Sigma_1, \\
 &\int_{\cot(\pi/8)}^{+\infty}  \frac{\ud v}{ (u-v)^2} \quad    \text{  if  } \ u \in \Sigma_2 \cup \Sigma_3 \cup \Sigma_4 \cup \Sigma_5 \cup \Sigma_6 ,\\
 & \int_{\cot(\pi/8)}^{\cot(\gamma(7\pi/8))  }  \frac{\ud v}{ (u-v)^2} \quad  \text{  if  } \ u \in \Sigma_7 ,
\end{align*}
and using the fact that $\cot(\pi/8)= 1+ \sqrt{2} = - \cot(7\pi/8)$ and $\cot(\gamma (2\pi/8))= \sqrt{2}$.  
\end{proof}

\section*{Acknowledgments}
We would like to acknowledge the hospitality and the support given from  the Max Planck Institute fur Mathematics during the special program on ``Dynamical Numbers". 

\bibliography{bibliooctagongeodesic}

\begin{thebibliography}{Ser85b}

\bibitem[AH00]{AH:fra}
Pierre Arnoux and Pascal Hubert.
\newblock Fractions continues sur les surfaces de veech.
\newblock {\em Journal d'Analyse Math{\'e}mathique}, 81:35--64, 2000.

\bibitem[Arn02]{Ar:stur}
Pierre Arnoux.
\newblock {\em Sturmian sequences}, pages 143--198.
\newblock Lecture Notes in Math., 1794. Springer, Berlin, 2002.

\bibitem[Bow]{Bo:vee}
Joshua~P. Bowman.
\newblock Teichmueller geodesics, {D}elaunay triangulations and {V}eech groups.
\newblock Preprint available at
  http://www.math.cornell.edu/~bowman/HRIproc.pdf.

\bibitem[EG97]{CG:tei}
Clifford~J. Earle and Frederick~P. Gardiner.
\newblock {\em Teichm{\"u}ller disks and Veech's $\mathcal{F}$-structures.},
  pages 165--189.
\newblock Number 201 in Contemp. Math. Amer. Math. Soc., Providence, RI, 1997.

\bibitem[Hat02]{Ha:alg}
Allen Hatcher.
\newblock {\em Algebraic topology.}
\newblock Cambridge University Press, Cambridge, 2002.

\bibitem[Kat92]{SKa:Fuc}
Svetlana Katok.
\newblock {\em Fuchsian Groups}.
\newblock Chicago Lectures in Mathematics. The University of Chicago Press,
  1992.

\bibitem[Mas06]{Ma:erg}
Howard Masur.
\newblock {\em Ergodic theory of translation surfaces. In Handbook of dynamical
  systems}, pages 527--547.
\newblock Elsevier B. V., Amsterdam., 2006.

\bibitem[MS91]{MS:Hau}
Howard Masur and John Smillie.
\newblock Hausdorff dimension of sets of nonergodic measured foliations.
\newblock {\em Annals of Mathematics}, 134(3):455--543, 1991.

\bibitem[Rat06]{RA:fou}
John~G. Ratcliffe.
\newblock {\em Foundations of hyperbolic manifolds.}
\newblock Number 149 in Graduate Texts in Mathematics. Springer, New York,
  2006.
\newblock Second Edition.

\bibitem[Riv94]{Ri:euc}
Igor Rivin.
\newblock Euclidean structures on simplicial surfaces and hyperbolic volume.
\newblock {\em Annals of Mathematics}, 39:553--580, 1994.

\bibitem[Ser85a]{Se:geo}
Caroline Series.
\newblock The geometry of {M}arkoff numbers.
\newblock {\em The Mathematical Intelligencer}, 7(3):20--29, 1985.

\bibitem[Ser85b]{Se:mod}
Caroline Series.
\newblock The modular surface and continued fractions.
\newblock {\em Journal of the London Mathematical Society. (2)}, 31(1):69--80,
  1985.

\bibitem[Ser86]{Se:geoMc}
Caroline Series.
\newblock Geometrical {M}arkov coding on surfaces of constant negative
  curvature.
\newblock {\em Ergodic Theory and Dynamical Systems}, 6(4):601--625, 1986.

\bibitem[Ser91]{Se:sym}
Caroline Series.
\newblock {\em Ergodic theory, symbolic dynamics, and hyperbolic spaces
  (Trieste, 1989)}, chapter Geometrical methods of symbolic coding., pages
  125--151.
\newblock Oxford Sci. Publ. Oxford Univ. Press, New York, 1991.

\bibitem[SU]{SU:sym}
John Smillie and Corinna Ulcigrai.
\newblock Symbolic coding for linear trajectories in the regular octagon.
\newblock Preprint arXiv:0905.0871v1 [math.DS].

\bibitem[SW]{SW:cha}
John Smillie and Barak Weiss.
\newblock Characterizations of lattice surfaces.
\newblock Preprint Arxiv: 0809.3729v1 to appear in Inventiones Mathematica.

\bibitem[Thu97]{Th:thr}
William~P. Thurston.
\newblock {\em Three-dimensional geometry and topology. Volume 1. Edited by
  Silvio Levy.}
\newblock Number~35 in Princeton Mathematical Series. Princeton University
  Press, Princeton, NJ, 1997.

\bibitem[Vee]{Ve:bic}
William~A. Veech.
\newblock Bicuspid $f$-structures and {H}ecke groups.
\newblock Preprint.

\bibitem[Vee89]{Ve:tei}
William~A. Veech.
\newblock Teichm{\"u}ller curves in moduli space, {E}isenstein series and an
  application to triangular billiards.
\newblock {\em Invent. Math.}, 97(3):553--583, 1989.

\bibitem[Vee97]{Ve:del}
William~A. Veech.
\newblock Delaunay partitions.
\newblock {\em Topology}, 36(1):1--28, 1997.

\end{thebibliography}
\bibliographystyle{alpha}

{\small \rmfamily{  DEPARTMENT OF MATHEMATICS, CORNELL UNIVERSITY, ITHACA, NY, 14853, USA}}

{\it E-mail address: }\texttt{smillie@math.cornell.edu} 

\vspace{5mm}

{\small \rmfamily{SCHOOL OF MATHEMATICS, UNIVERSITY OF BRISTOL, BRISTOL, BS8 1TW, UK }}

{\it E-mail address: }\texttt{corinna.ulcigrai@bristol.ac.uk} 
 
\end{document}